\documentclass[12pt]{amsart}

\NeedsTeXFormat{LaTeX2e}[1995/12/01]
\RequirePackage{mathtools}
\RequirePackage{amsopn}
\RequirePackage{amsfonts}
\RequirePackage{paralist}
\RequirePackage{amssymb}
\RequirePackage{amsthm}
\RequirePackage{mathrsfs}
\usepackage[alphabetic]{amsrefs}
\renewcommand\MR[1]{\relax} 
\usepackage{tikz}
\usetikzlibrary{cd,decorations.pathmorphing,decorations.markings}
\mathtoolsset{showonlyrefs} 
%
%
\newtheorem{thm}{Theorem}[section]
\numberwithin{equation}{section}

\newtheorem{cor}[thm]{Corollary}
\newtheorem{lemma}[thm]{Lemma}
\newtheorem{prop}[thm]{Proposition}

\theoremstyle{definition}
\newtheorem{definition}[thm]{Definition}

\theoremstyle{remark}
\newtheorem{remark}[thm]{Remark}
\newtheorem{example}[thm]{Example}

\newtheorem{mycomment}[thm]{Comment}
{\end{mycomment}\endgroup}
\hyphenation{pre-print}

\def\mathcs{C^{*}}
\newcommand{\cs}{\ensuremath{\mathcs}}

\DeclareMathSymbol{\rtimes}{\mathbin}{AMSb}{"6F}

\def\ibind#1{\mathop{#1\mathord{\mathop{\text{--}}}}\!\Ind\nolimits}
\newcommand\xind{\ibind\X}

\newcommand\C{\mathbf{C}}
\newcommand\T{\mathbf{T}}
\newcommand\Z{\mathbf{Z}}

\newcommand\N{\mathbf{N}}

\newcommand\set[1]{\{\,#1\,\}}
\newcommand\sset[1]{\{#1\}}
\let\tensor=\otimes
\def\restr#1{|_{{#1}}}
%
\makeatletter
\def\labelenumi{\textnormal{(\@alph\c@enumi)}}
\def\theenumi{\@alph \c@enumi}
\def\labelenumii{\textnormal{(\@roman\c@enumii)}}
\def\theenumii{\@roman \c@enumii}
\newcount\charno
\def\alphapart#1{\charno=96
\advance\charno by#1\char\charno}

\makeatother

%
\def\<{\langle}
\def\>{\rangle}
\let\ipscriptstyle=\scriptscriptstyle
\def\lipsqueeze{{\mskip -3.0mu}}
\def\ripsqueeze{{\mskip -3.0mu}}
\def\ipcomma{\nobreak\mathrel{,}\nobreak}
\newbox\ipstrutbox
\setbox\ipstrutbox=\hbox{\vrule height8.5pt
width 0pt}
\def\ipstrut{\copy\ipstrutbox}
\def\lip#1<#2,#3>{\mathopen{\relax_{\ipstrut\ipscriptstyle{
#1}}\lipsqueeze
\langle} #2\ipcomma #3 \rangle}
\def\blip#1<#2,#3>{\mathopen{\relax_{\ipstrut
\ipscriptstyle{ #1}}\lipsqueeze\bigl\langle} #2\ipcomma #3 \bigr\rangle}
\def\rip#1<#2,#3>{\langle #2\ipcomma #3
\rangle_{\ripsqueeze\ipstrut\ipscriptstyle{#1}}}
\def\brip#1<#2,#3>{\bigl\langle #2\ipcomma #3
\bigr\rangle_{\ripsqueeze\ipstrut\ipscriptstyle{#1}}}
\def\angsqueeze{\mskip -6mu}
\def\smangsqueeze{\mskip -3.7mu}
\def\trip#1<#2,#3>{\langle\smangsqueeze\langle #2\ipcomma #3
\rangle\smangsqueeze\rangle_{\ripsqueeze\ipstrut\ipscriptstyle{#1}}}
\def\btrip#1<#2,#3>{\bigl\langle\angsqueeze\bigl\langle #2\ipcomma
#3
\bigr\rangle
\angsqueeze\bigr\rangle_{\ripsqueeze\ipstrut\ipscriptstyle{#1}}}
\def\tlip#1<#2,#3>{\mathopen{\relax_{\ipstrut\ipscriptstyle{
#1}}\lipsqueeze \langle\smangsqueeze\langle} #2\ipcomma #3
\rangle\smangsqueeze\rangle}
\def\btlip#1<#2,#3>{\mathopen{\relax_{\ipstrut\ipscriptstyle{
#1}}\lipsqueeze
\bigl\langle\angsqueeze\bigl\langle} #2\ipcomma #3
\bigr\rangle\angsqueeze\bigr\rangle}

\def\ip(#1|#2){(#1\mid #2)}
\def\bip(#1|#2){\bigl(#1 \mid #2\bigr)}
\def\Bip(#1|#2){\Bigl( #1 \bigm| #2 \Bigr)}
%
\expandafter\ifx\csname BibSpec\endcsname\relax\else
\BibSpec{collection.article}{%
    +{}  {\PrintAuthors}                {author}
    +{,} { \textit}                     {title}
    +{.} { }                            {part}
    +{:} { \textit}                     {subtitle}
    +{,} { \PrintContributions}         {contribution}
    +{,} { \PrintConference}            {conference}
    +{}  {\PrintBook}                   {book}
    +{,} { }                            {booktitle}
    +{,} { }                            {series}
    +{,} { \voltext}                    {volume}
    +{,} { }                            {publisher}
    +{,} { }                            {organization}
    +{,} { }                            {address}
    +{,} { \PrintDateB}                 {date}
    +{,} { pp.~}                        {pages}
    +{,} { }                            {status}
    +{,} { \PrintDOI}                   {doi}
    +{,} { available at \eprint}        {eprint}
    +{}  { \parenthesize}               {language}
    +{}  { \PrintTranslation}           {translation}
    +{;} { \PrintReprint}               {reprint}
    +{.} { }                            {note}
    +{.} {}                             {transition}
}
\BibSpec{article}{%
    +{}  {\PrintAuthors}                {author}
    +{,} { \textit}                     {title}
    +{.} { }                            {part}
    +{:} { \textit}                     {subtitle}
    +{,} { \PrintContributions}         {contribution}
    +{.} { \PrintPartials}              {partial}
    +{,} { }                            {journal}
    +{}  { \textbf}                     {volume}
    +{}  { \PrintDatePV}                {date}
    +{,} { \eprintpages}                {pages}
    +{,} { }                            {status}
    +{,} { \PrintDOI}                   {doi}
    +{,} { available at \eprint}        {eprint}
    +{}  { \parenthesize}               {language}
    +{}  { \PrintTranslation}           {translation}
    +{;} { \PrintReprint}               {reprint}
    +{.} { }                            {note}
    +{.} {}                             {transition}
}
\BibSpec{book}{%
    +{}  {\PrintPrimary}                {transition}
    +{,} { \textit}                     {title}
    +{.} { }                            {part}
    +{:} { \textit}                     {subtitle}
    +{,} { \PrintEdition}               {edition}
    +{}  { \PrintEditorsB}              {editor}
    +{,} { \PrintTranslatorsC}          {translator}
    +{,} { \PrintContributions}         {contribution}
    +{,} { }                            {series}
    +{,} { \voltext}                    {volume}
    +{,} { }                            {publisher}
    +{,} { }                            {organization}
    +{,} { }                            {address}
    +{,} { pp.~}                        {pages}
    +{,} { \PrintDateB}                 {date}
    +{,} { }                            {status}
    +{}  { \parenthesize}               {language}
    +{}  { \PrintTranslation}           {translation}
    +{;} { \PrintReprint}               {reprint}
    +{.} { }                            {note}
    +{.} {}                             {transition}
}
\fi
\setlength{\textwidth}{36pc}
\setlength{\oddsidemargin}{3.25in}
\addtolength{\oddsidemargin}{-18pc}
\evensidemargin=\oddsidemargin
\setlength{\pltopsep}{3pt plus 1pt minus 1pt}
\setlength{\plitemsep}{2pt plus 1pt minus 1pt}
\setlength{\plparsep}{0pt plus 1pt}
%
%

\newcommand\go{G^{(0)}}
\newcommand\so{\Sigma_{0}}
\newcommand\X{\mathsf{X}}
\newcommand\xo{\X_{0}}
\newcommand\cc{C_{c}}
\newcommand\Rip{\rip\star}
\newcommand\Ind{\operatorname{Ind}}
\newcommand\indgh{\Ind_{H}^{G}}
\renewcommand\H{\mathcal{H}}
\newcommand\atensor{\odot}

\renewcommand\L{\mathcal{L}}
\newcommand\Sigmah{\widehat\Sigma}
\newcommand\Sigmac{\Sigmah_{S}}
\newcommand\indsg{\Ind_{\Sigma}^{G}}

\newcommand\stab{\operatorname{Stab}}
\newcommand\stabg{\stab(G)}
\newcommand\rs{\rho_{S}}
\newcommand\ts{\tau_{S}}
\newcommand\Prim{\operatorname{Prim}}
\newcommand\primcsg{\Prim\cs(G)}

\newcommand\psind{\kappa}
\newcommand\psindq{\tilde\psind}
\newcommand\irrind{\Ind}
\newcommand\irrindq{\underline{\irrind}}

\newcommand\stabgg{G\backslash\stabg}
\newcommand\stabggt{(\stabgg)^{\sim}}

\newcommand\gugo{G\backslash\go}

\renewcommand\O{\mathcal{O}}

\newcommand\I{\mathcal{I}}

\newcommand\sg{\mathbb{H}}
\newcommand\sgr{\sg(r(\gamma))}
\newcommand\sgu{\sg(u)}

\def\ipu(#1|#2){\bip({#1}|{#2})_{u}}

\newcommand\uL{\underline{L}}

\newcommand\grg{\mathcal{G}}
\newcommand\grh{\mathcal{H}}
\newcommand\lt{\operatorname{lt}}

\newcommand\hath{\hat\grh}

\newcommand\rg{R_{G}}

\newcommand\Nz{\N_{0}} 
\newcommand\dom{\operatorname{dom}}
\newcommand\gto{G_{T}^{(0)}}

\newcommand\stabgt{\stab(G_{T})}

\newcommand\A{\mathcal{A}}
\newcommand\G{\mathcal{G}}




\allowdisplaybreaks
\emergencystretch=50pt

\begin{document}
\begin{abstract}
  We study the topology of the primitive ideal space of groupoid
  \cs-algebras for groupoids with abelian isotropy.  Our results
  include the known results for action groupoids with abelian
  stabilizers.  Furthermore, we obtain complete results when the
  isotropy map is continuous except for jump discontinuities, and also
  when $G$ is a unit space fixing extension of a proper groupoid by
  an abelian group bundle. We hope that our methods will be a
  springboard to further results of this type.
\end{abstract}

\title[Primitive Ideal Space]{The Primitive Ideal Space of Groupoid
  \cs-Algebras for 
  Groupoids with Abelian Isotropy}

\author[van Wyk]{Daniel W. van Wyk}
\address{Department of Mathematics\\ Dartmouth College \\ Hanover, NH
  03755-3551 USA}
\email{dwvanwyk79@gmail.com}

\author[Williams]{Dana P. Williams}
\address{Department of Mathematics\\ Dartmouth College \\ Hanover, NH
  03755-3551 USA}
\email{dana.williams@Dartmouth.edu}

\date{24 August 2021}

\maketitle


\section*{Introduction}
\label{sec:intro}

If $(\grg,X)$ is a second countable locally compact transformation
group, then we can form the action groupoid $G=G(\grg,X)$.  The term
transformation groupoid is also used as in this case the groupoid
\cs-algebra, $\cs(G)$, is naturally isomorphic to the transformation
group \cs-algebra $C_{0}(X)\rtimes_{\lt}\grg$.  If all the stability
groups $S_{x}=\set{g\in \grg:g\cdot x=x}$ are contained in a fixed
abelian subgroup of $\grg$, and if $(\grg,X)$ is EH-regular, then it
has been known for a long time how describe both the primitive ideal
space, $\primcsg$, of $\cs(G)$ as well as its Jacobson topology
\cite{wil:tams81}*{Theorem~5.8}.  This immediately suggests the question
as to what one can say about $\primcsg$ if we merely assume that each
stability group is abelian.  Since transformation groups provide
fundamental examples of groupoids, this naturally leads to considering
locally compact groupoids with abelian isotropy groups.  At present, a
general result of this type is well out of reach even for action
groupoids.  This is amply illustrated by
\cite{simwil:art15}*{Theorem~3.2} where considerable technology is
required just to describe $\primcsg$ as a set when $G$ is a second
countable locally compact Deaconu--Renault groupoid.

The purpose of this paper is to suggest an attack on the problem of
describing $\primcsg$ as a topological space when $G$ is a second
countable locally compact Hausdorff groupoid and all the isotropy
groups $G(u)=\set{\gamma\in G:s(\gamma)=u=r(\gamma)}$ are abelian.  We
are able to obtain results that subsume the known results for action
groupoids with abelian stability groups, and to extend these results
to the groupoid setting.  We hope that this work will motivate even
more successful attacks on the problem in the future.  
In an expository vein, we also give an alternate description of $\primcsg$
in cases where the answer is already known.  For our approach to work,
we need to know that our groupoid $G$ is EH-regular in the sense that
every primitive ideal in $\primcsg$ is induced from a character $\pi$
of a isotropy group $G(u)$ with $u\in\go$.  Fortunately, this is known
to be the case if if every
$G$-orbit $[u]:=G\cdot u$ is locally closed in $\go$ 
\cite{wil:toolkit}*{Theorem~5.35}, or more generally if
$G$ is amenable \cite{ionwil:iumj09}. Then we can parameterize
$\primcsg$ as a quotient of $\stabg=\set{(u,\pi):\text{$u\in\go$ and
    $\pi\in G(u)^{\wedge}$}}$ where $G(u)^{\wedge}$ is the Pontryagin
dual of the abelian group $G(u)$.  Having described this quotient, the
crucial step is to equip it with a topology such that it is
homeomorphic to $\primcsg$.  We also want the description of this
topology to be sufficiently transparent so that it can be used in
applications.  It should be said that our methods are unlikely to uncover
explicit information unless one can describe the isotropy groups in a
coherent way.

In the case of an action groupoid $G=G(\grg,X)$, with $\grg$ acting
properly on $X$, Echterhoff and Emerson solve this problem elegantly
in \cite{echeme:em11}*{Theorem~4.6} without assuming that the isotropy
is abelian provided $(\grg,X)$ satisfies what they call the Palais
slice property.  (This additional assumption is automatically
satisfied when $\grg$ is a Lie group or a discrete group.)  A related,
but slightly different, solution for all proper group actions is
presented in Katharina Neumann's Ph.D.\ thesis \cite{neu:phd11}.
Neumann replaces the Palais slice property with a theorem of Abels
\cite{abe:mz78}*{Theorem~3.3} for proper actions.  Her
description of the topology requires knowledge of the Jacobson
topology on the spectrum of the Fell subgroup algebra
$\mathcal S(\grg)$.  
In the extreme case that the isotropy map
$u\mapsto G(u)$ is continuous, Geoff Goehle gives a
description of $\primcsg$ as a topological space for any groupoid with
abelian isotropy in \cite{goe:rmjm12}*{Theorem~3.5}---in fact,
Goehle's results apply to the much more general setting of groupoid
crossed products \cite{goe:rmjm12}*{Theorem~2.23}.

In this paper, we equip $\stabg$ with a second countable topology so
that the natural $G$-action is continuous.  As does Neumann, this
description is in terms of an auxillary space, but our space is the
Gelfand dual of an abelian \cs-algebra, and hence is much more easily
and concretely described. In the case that $\gugo$
is $T_{0}$---that is, each orbit $[u]$ is locally closed in
$\go$---induction induces a continuous injection $\irrindq$ of
$\stabgg$ onto $\cs(G)^{\wedge}$ (Corollary~\ref{cor-main-cts}).  In the
general case, we need to consider the quotient $\stabggt$ where we
identify orbits with the same closure.  Then we obtain a continuous
map $\psindq$ of $\stabggt$ into $\primcsg$ which is surjective if $G$
is EH-regular (Proposition~\ref{prop-main-cts}).  Naturally, we expect
that both $\irrindq$ and $\psindq$ are homeomorphisms in most
circumstances.  Unfortunately, at present, we can only prove this
under somewhat restrictive hypotheses.  In fact, we aren't even sure
whether $\psindq$ is always injective.  Our hope is that we, or
others, will be able to push the envelope in future work.

Nevertheless, we are able to make some interesting new contributions.
In particular, we consider groupoids $G$ where the isotropy map is
continuous except for jump discontinuities at discrete points.  Our
first main result, Theorem~\ref{thm-jmp-dis}, is that $\psindq$ is a
homeomorphism in this case.  Thus we obtain Goehle's continuously
varying isotropy result as a special case, and also give an elegant
description of Goehle's topology on $\stabggt$.  We show by example that
our result applies to the groupoids associated to many directed
graphs.

Our second main result, Theorem~\ref{thm-baby}, shows that $\irrindq$
is a homeomorphism whenever $G$ is what we call ``proper modulo its
isotropy''.  This class of groupoids not only includes proper
groupoids but also extensions of proper groupoids by abelian group bundles
as studied in, for example,
\cites{iksw:jot19,ikrsw:jfa20,ikrsw:xx21}.  In
particular, the isotropy groups need no longer be 
compact in this case.  This result
recovers the Echter\-hoff--Em\-er\-son
and Neumann results in the case $G=G(\grg,X)$ with $\grg$ acting
properly with abelian stabilizers.  

The paper is organized as follows.  In Sections
\ref{sec:preliminiaries}~and \ref{sec:subgroup-module}, we set out our
assumptions and review some standard constructions including induced
representations of groupoids.  In Section~\ref{sec:stabg}, we
introduce our topology on $\stabg$.  In addition to giving a criterion
for convergence in this topology, we provide a basis for our topology
and investigate
some of $\stabg$'s elementary properties.

In Section~\ref{sec:primcsg} we introduce the maps $\irrindq$ and
$\psindq$ described above, and establish continuity.  Even though
these are not (yet) homeomorphism results, we allow ourselves an aside
in Section~\ref{sec:rena-simpl-result} to see that our methods already
recover a well known simplicity result of Renault's for the special case of
amenable groupoids with abelian isotropy.

In Section~\ref{sec:subgroup-sections}, we introduce our basic tool,
what we call a subgroup section, which allows us to deal with certain
discontinuities of the isotropy map.  We use this in
Section~\ref{sec:continuous-isotropy} to prove
Theorem~\ref{thm-jmp-dis} and thereby recover Goehle's result as a
special case.

In Section~\ref{sec:abelian-group-action}, we show that our
description includes the known results for action groupoids.  At the
same time, we obtain a separate description of the Jacobson topology
in these cases which, depending on the application at hand, might be a
more useful description.

In Section~\ref{sec:gener-prop-group} we prove our
Theorem~\ref{thm-baby}
which
generalizes the Echter\-hoff--Em\-er\-son/Neumann result to proper
groupoids in the 
case of abelian isotropy and also applies to groupoids that are proper
modulo their isotropy.   We  provide some interesting examples in
Section~\ref{sec:examples} of groupoids (other than action groupoids)
with abelian isotropy that is continuous except for jump
discontinuities.   

\subsubsection*{Assumptions}
\label{sec:assumptions}

Throughout, $G$ will be a second countable locally compact Hausdorff
groupoid with abelian isotropy and a Haar system
$\lambda=\set{\lambda^{u}}_{u\in\go}$.  Similarly, locally compact
spaces are assumed to be second countable and Hausdorff.  We assume
that our \cs-algebras are separable with the exception of multiplier
algebras which obviously aren't.  Homomorphisms between \cs-algebras
are assumed to be $*$-preserving and representations are assumed to be
nondegenerate.  We rely on \cite{wil:toolkit} as a convenient
references for basic groupoid technology and notation.  We also rely
heavily on the Rieffel machine for inducing representations as laid
out in \cite{rw:morita}*{\S2.4}.

\section{Preliminiaries}
\label{sec:preliminiaries}

As in \cite{wil:toolkit}*{\S3.4}, we let $\so$ be the locally compact
Hausdorff space of closed subgroups of $G$ with the Fell topology.  If
$H\in \so$, then there is a $u=p_{0}(H)\in \go$ such that $H$ is a
subgroup of the isotropy group
$G(u):=\set{\gamma\in G:s(\gamma)=u=r(\gamma)}$.  Then
$p_{0}:\so\to \go$ is continuous and proper in the sense that
$p_{0}^{-1}(K)$ is compact for all compact $K\subset \go$.

Let $\pi$ be a representation of $H\in\so$.  As is usual for
groups, we do 
not distinguish between the unitary representation $\pi:G\to
U(\H_{\pi})$ and its integrated form $\pi:\cc(H)\to B(\H_{\pi})$ given
by
\begin{equation}
  \label{eq:7}
  \pi(b)h=\int_{H}b(t)\pi(t)h\,d\beta^{H}(t)\quad\text{$b\in\cc(H)$, and
$h\in\H_{\pi}$.}
\end{equation}

There is a well known method for inducing a representation $\pi$ of
$H\in\so$ to a representation $\indgh\pi$ of $\cs(G)$ based on the
Rieffel machine.  The details are spelled out in
\cite{ionwil:pams08}*{\S2}, and we recall some of these to motivate
the constructions in Section~\ref{sec:subgroup-module}.  For
convenience, let $u=p_{0}(H)$ and let $G_{u}:=s^{-1}(u)$.  We equip
$\cc(G_{u})$ with a $\cc(H)$-valued pre-inner product given by
\begin{equation}
  \label{eq:8}
  \Rip<\psi,\phi>(t)=\int_{G}\overline{\psi(\gamma)}\phi(\gamma t)
  \,d\lambda_{u}(\gamma).
\end{equation}
Then we can complete $\cc(G_{u})$ to a right Hilbert $C_{c}(H)$-module
$\X$.  We get an action of $\cs(G)$ on $\X$ by adjointable operators
where $f\in \cc(G)$ acts on $\phi\in\cc(G_{u})$ by 
\begin{equation}
  \label{eq:4}
  f\cdot
  \phi(\gamma)
  =\int_{G}f(\eta)\phi(\eta^{-1}\gamma)\,d\lambda^{r(\gamma)}(\eta). 
\end{equation}
Then the induced representation
$\indgh\pi$ of $\cs(G)$ acts on $\H_{\Ind\pi}$ which is the completion of
$\cc(G_{u})\atensor \H_{\pi}$ with respect to the pre-inner product
\begin{align}
  \label{eq:3}
  \ip(\phi\tensor h|\psi\tensor k)
  &= \bip(\pi\bigl(\Rip<\psi,\phi>\bigr)h|k) 
    \,d\beta^{H}(t) \\
  &=\int_{H}\int_{G}\overline{\psi(\gamma)}\phi(\gamma t) \bip({\pi(t)h}|k)
   \,d\lambda_{u}(\gamma)\,d\beta^{H}(t).
\end{align}
If $f\in \cc(G)$, then $\indgh\pi(f)$ acts on
$\cc(G_{u})\atensor \H_{\pi}$ by $\indgh\pi(f)(\phi\tensor h)=f\cdot
\phi\tensor h$.

In the sequel, we will be interested primarily in the case that $\pi$
is a character in the dual $\hat H$.  Then $\H_{\pi}=\C$ and we can
identify $\cc(G_{u})\atensor \H_{\pi}$ with $\cc(G_{u})$ equipped with
the inner product
\begin{equation}
  \label{eq:5}
  \ip(\phi|\psi) =\int_{H}\int_{G}\overline{\psi(\gamma)}\phi(\gamma
  t) \pi(t)\,d\lambda_{u}(\gamma)\,d\beta^{H}(t).
\end{equation}

We can form a locally compact abelian group bundle $p:\Sigma\to\so$ where
\begin{equation}
  \label{eq:1}
  \Sigma=\set{(H,t)\in\so\times G:t\in H}.
\end{equation}
Then  $\Sigma$ always has a Haar system
$\beta=\set{\beta^{H}}_{H\in\so}$ by
\cite{wil:toolkit}*{Proposition~3.23}.\footnote{In
  \cite{wil:toolkit}*{\S3.5}, the elements of $\Sigma$ were written as
triples for pedagogical reasons.  Here we have opted for a more
compact notation.}   It will be useful to recall the following.

\begin{lemma}[\cite{wil:toolkit}*{Lemma~3.25}]   
  \label{lem-3.25} Let $G*\so=\set{(\gamma,H):s(\gamma)=p_{0}(H)}$.
  There is a continuous
  function $\omega:G*\so\to (0,\infty)$ such that
  \begin{equation}
    \label{eq:21}
    \int_{H}f(\sigma \gamma\sigma^{-1}) \,d\beta^{H}(\gamma)
    =\omega(\sigma,H)\int_{\sigma\cdot H} f(\gamma)
    \,d\beta^{\sigma\cdot H}(\gamma)
  \end{equation}
  for all $f\in\cc(G)$.   Moreover, for all $\sigma,\tau\in G$ and
  $H\in\so$, we have
  \begin{equation}
    \label{eq:22}
    \omega(\sigma\tau,H)=\omega(\tau,H)\omega(\sigma,\tau\cdot
    H)\quad\text{and} \quad
    \omega(\sigma,H)^{-1}=\omega(\sigma^{-1},\sigma\cdot H).
  \end{equation}
\end{lemma}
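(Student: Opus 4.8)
The plan is to manufacture $\omega$ from uniqueness of Haar measure, read off \eqref{eq:21} immediately, derive \eqref{eq:22} as a formal consequence, and then spend the real effort on continuity.

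For existence and the integral formula, fix $(\sigma,H)\in G*\so$ and write $u=p_{0}(H)=s(\sigma)$. Since $G$ has abelian isotropy, $H$ and $\sigma\cdot H=\sigma H\sigma^{-1}$ are abelian closed subgroups, and conjugation $c_{\sigma}\colon\gamma\mapsto\sigma\gamma\sigma^{-1}$ is a topological group isomorphism of $H$ onto $\sigma\cdot H$. A one-line computation using invariance of $\beta^{H}$ shows that the pushforward $c_{\sigma*}\beta^{H}$ is a Haar measure on $\sigma\cdot H$, so by uniqueness of Haar measure there is a unique $\omega(\sigma,H)\in(0,\infty)$ with $c_{\sigma*}\beta^{H}=\omega(\sigma,H)\,\beta^{\sigma\cdot H}$. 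Integrating $f\in\cc(G)$ against this identity and unwinding the pushforward gives exactly \eqref{eq:21}; in particular $\omega$ is forced by \eqref{eq:21} and is the only possible candidate.

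For the cocycle relations, note that whenever $s(\tau)=p_{0}(H)$ and $s(\sigma)=r(\tau)$ we have $c_{\sigma\tau}=c_{\sigma}\circ c_{\tau}$ and hence $(\sigma\tau)\cdot H=\sigma\cdot(\tau\cdot H)$. Pushing $\beta^{H}$ forward first through $c_{\tau}$ and then through $c_{\sigma}$, and comparing with the direct pushforward through $c_{\sigma\tau}$, yields $\omega(\sigma\tau,H)=\omega(\tau,H)\,\omega(\sigma,\tau\cdot H)$, the first identity in \eqref{eq:22}. Conjugation by the unit $u=p_{0}(H)$ is the identity map of $H$, so $\omega(u,H)=1$; applying the first identity to the composite $\sigma^{-1}\sigma=u$ then gives $1=\omega(\sigma,H)\,\omega(\sigma^{-1},\sigma\cdot H)$, which is the second identity in \eqref{eq:22}.

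It remains to establish continuity, and this is the step I expect to be the main obstacle. Fix $(\sigma_{0},H_{0})\in G*\so$ and choose $f\in\cc(G)$ with $f\ge 0$ and $\int_{\sigma_{0}\cdot H_{0}}f\,d\beta^{\sigma_{0}\cdot H_{0}}>0$. On any neighborhood of $(\sigma_{0},H_{0})$ where the denominator stays positive, \eqref{eq:21} lets me write
\[
  \omega(\sigma,H)=\frac{\int_{H} f(\sigma\gamma\sigma^{-1})\,d\beta^{H}(\gamma)}
  {\int_{\sigma\cdot H} f(\eta)\,d\beta^{\sigma\cdot H}(\eta)} .
\]
The denominator is the composite of the conjugation action $(\sigma,H)\mapsto\sigma\cdot H$ on $\so$ (continuous in the Fell topology) with the map $K\mapsto\int_{K} f\,d\beta^{K}$, and the latter is continuous because $\beta$ is a continuous Haar system and $f\in\cc(G)$ restricts to an element of $\cc(\Sigma)$ — here the properness of $p_{0}$ is what keeps the relevant subgroups in a compact set. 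The numerator is where the argument has real content: the integrand $f(\sigma\gamma\sigma^{-1})$ depends on the parameter $\sigma$, so it is not a fixed function on $\Sigma$ and the bare Haar-system continuity does not apply directly. The plan is to regard $(\sigma,H,\gamma)\mapsto f(\sigma\gamma\sigma^{-1})$ as a continuous function on the pulled-back group bundle $\set{(\sigma,H,\gamma):(\sigma,H)\in G*\so,\ \gamma\in H}$ over $G*\so$, whose fibers again carry $\beta^{H}$, and to check that $f\in\cc(G)$ together with the properness of $p_{0}$ confines the fiberwise supports to a single compact set as $(\sigma,H)$ ranges over a compact neighborhood of $(\sigma_{0},H_{0})$. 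The standard continuity result for integration against a Haar system then shows the numerator is continuous, so $\omega$ is a quotient of two continuous functions near $(\sigma_{0},H_{0})$; as the point was arbitrary, $\omega$ is continuous on all of $G*\so$.
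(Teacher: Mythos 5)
The paper does not prove this lemma at all: it is quoted verbatim from \cite{wil:toolkit}*{Lemma~3.25} and used as imported background. Your argument is correct and is essentially the standard proof of that result: $\omega(\sigma,H)$ is produced by uniqueness of Haar measure applied to the pushforward of $\beta^{H}$ under the conjugation isomorphism $H\to\sigma\cdot H$ (which immediately yields \eqref{eq:21}), the identities \eqref{eq:22} follow formally from functoriality of pushforward together with $\omega(u,H)=1$, and continuity is obtained by writing $\omega$ locally as a ratio of two integrals, each continuous by the continuity of the Haar system $\beta$ (pulled back over $G*\so$ for the numerator, where your compact-support check using properness of $p_{0}$ and compactness of $\sigma^{-1}K\sigma$ is exactly the point that needs care).
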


Since $\Sigma$ is an abelian group bundle,  $\cs(\Sigma)$ is
a commutative \cs-algebra.  As shown in \cite{mrw:tams96}*{\S3}, its
Gelfand spectrum, $\Sigmah$, is an abelian group bundle over $\so$
such that the fibre over $H$ can be identified with $\hat H$.  Thus we
can assume that $\Sigmah=\set{(H,\pi):\text{$H\in\so$ and $\pi\in \hat
  H$}}$.  Then $(H,\pi)$ represents the complex homomorphism given on
$b\in \cc(\Sigma)$  by
\begin{equation}
  \label{eq:10}
  (H,\pi)(b)=\hat b(H,\pi)=\int_{H}b(H,t)\pi(t) \,d\beta^{H}(t).
\end{equation}

Whenever convenient, we identify $\cs(\Sigma)$ with
$C_{0}(\Sigmah)$ via the Gelfand transform $b\mapsto \hat b$, where
$\hat b$ is defined as in \eqref{eq:10}.

One of the benefits of restricting to abelian isotropy is that the
Gelfand topology on $\Sigmah$ is well understood with a convenient and elegant 
description in terms of sequences.

\begin{prop}[\cite{mrw:tams96}*{Proposition~3.3}]
  \label{prop-mrw3.3} A sequence $\bigl((H_{n},\pi_{n})\bigr)$ in
  $\Sigmah$ con\-ver\-ges to $(H_{0},\pi_{0})$ in the Gelfand topology on
  $\Sigmah$ if and only if
  \begin{enumerate}
  \item $H_{n}\to H_{0}$ in $\so$, and
  \item whenever $a_{n}\in H_{n}$ is such that $a_{n}\to a_{0}$, then we have
    $\pi_{n}(a_{n})\to \pi_{0}(a_{0})$.
  \end{enumerate}
\end{prop}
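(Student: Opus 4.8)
The plan is to read the Gelfand topology off the pairing \eqref{eq:10}. Since $\cs(\Sigma)$ is separable, $\Sigmah$ is second countable locally compact Hausdorff, hence metrizable, so its topology is detected by sequences; and because the functions $\hat b$ with $b\in\cc(\Sigma)$ are dense in $C_{0}(\Sigmah)$, we have $(H_{n},\pi_{n})\to(H_{0},\pi_{0})$ in $\Sigmah$ if and only if
\[
\int_{H_{n}}b(H_{n},t)\pi_{n}(t)\,d\beta^{H_{n}}(t)\longrightarrow\int_{H_{0}}b(H_{0},t)\pi_{0}(t)\,d\beta^{H_{0}}(t)\qquad\text{for all }b\in\cc(\Sigma).
\]
Throughout I would use two standard features of the continuous Haar system $\beta$ from \cite{wil:toolkit}: that $H\mapsto\int_{H}f\,d\beta^{H}$ is continuous on $\so$ for $f\in\cc(G)$, and that each $\beta^{H}$ is invariant under translation by elements of $H$; together with the fact that $\Sigma$ is closed in $\so\times G$, so that $H_{n}\to H_{0}$ and $t_{n}\to t_{0}$ with $t_{n}\in H_{n}$ force $t_{0}\in H_{0}$. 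The technical engine for both implications is a single ``passage to the limit'' principle: if $H_{n}\to H_{0}$, if the masses $\beta^{H_{n}}(K)$ over a fixed compact $K$ are bounded (dominate the indicator of $K$ by a function in $\cc(G)$ and apply continuity of $\beta$), and if continuous $g_{n}$ on $H_{n}$ converge \emph{continuously} to a continuous $g_{0}$ on $H_{0}$ (meaning $g_{n}(t_{n})\to g_{0}(t_{0})$ whenever $t_{n}\in H_{n}$ and $t_{n}\to t_{0}$), then $\int_{H_{n}}g_{n}\,d\beta^{H_{n}}\to\int_{H_{0}}g_{0}\,d\beta^{H_{0}}$. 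I would prove this by extending $g_{0}$ to some $\tilde g\in\cc(G)$ (Tietze), writing the difference as $\int_{H_{n}}(g_{n}-\tilde g)\,d\beta^{H_{n}}$ plus $\int_{H_{n}}\tilde g\,d\beta^{H_{n}}-\int_{H_{0}}\tilde g\,d\beta^{H_{0}}$, controlling the first via the mass bound once a short subsequence argument (compactness of $K$ and continuous convergence) shows $\sup_{t\in H_{n}\cap K}|g_{n}(t)-\tilde g(t)|\to0$, and letting the second vanish by continuity of $\beta$.

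The reverse implication is then immediate: assuming (a) and (b), fix $b\in\cc(\Sigma)$ and set $g_{n}(t)=b(H_{n},t)\pi_{n}(t)$. Continuity of $b$ on $\Sigma$ together with (b) shows $g_{n}\to g_{0}:=b(H_{0},\cdot)\pi_{0}(\cdot)$ continuously, so the principle above delivers exactly the convergence of the pairings.

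For the forward implication I would first obtain (a) from the $C_{0}(\so)$-module structure on $\cc(\Sigma)$: for $\phi\in\cc(\so)$ one has $\widehat{\phi\cdot b}(H,\pi)=\phi(H)\hat b(H,\pi)$, so choosing $b$ with $\hat b(H_{0},\pi_{0})\neq0$ gives $\phi(H_{n})\to\phi(H_{0})$ for every $\phi$, whence $H_{n}\to H_{0}$. The real content, and the step I expect to be the main obstacle, is (b): the characters $\pi_{n}$ may oscillate arbitrarily fast, so they cannot be absorbed into a single continuous function on $\Sigma$, and one cannot simply concentrate a bump near $a_{0}$. My way around this is an \emph{exact} fiberwise translation. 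Fix $F\in\cc(G)$ with $I_{0}:=\int_{H_{0}}F(t)\pi_{0}(t)\,d\beta^{H_{0}}(t)\neq0$; by the equivalence above, $\hat{(F|_{\Sigma})}(H_{n},\pi_{n})\to I_{0}$. Given $a_{n}\in H_{n}$ with $a_{n}\to a_{0}$ (so $a_{0}\in H_{0}$), invariance of $\beta^{H_{n}}$ yields
\[
\int_{H_{n}}F(a_{n}^{-1}t)\pi_{n}(t)\,d\beta^{H_{n}}(t)=\pi_{n}(a_{n})\,\hat{(F|_{\Sigma})}(H_{n},\pi_{n}).
\]

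The left-hand integrand is the difficulty, since $t\mapsto F(a_{n}^{-1}t)$ is only a fiberwise translate and does not extend to a fixed element of $\cc(\Sigma)$. I would therefore choose a fixed $b\in\cc(\Sigma)$ agreeing with $t\mapsto F(a_{0}^{-1}t)$ on the fiber $H_{0}$ (extended to $\Sigma$ by Tietze). Since $|\pi_{n}|\equiv1$ and both $t\mapsto F(a_{n}^{-1}t)$ and $t\mapsto b(H_{n},t)$ converge continuously to $t\mapsto F(a_{0}^{-1}t)$ on $H_{0}$, the supremum of their difference over $H_{n}\cap K$ tends to $0$; against the bounded masses, the left-hand side of the identity has the same limit as $\int_{H_{n}}b(H_{n},t)\pi_{n}(t)\,d\beta^{H_{n}}(t)$, which by Gelfand convergence for the \emph{fixed} $b$ equals $\int_{H_{0}}F(a_{0}^{-1}t)\pi_{0}(t)\,d\beta^{H_{0}}(t)=\pi_{0}(a_{0})I_{0}$ (using $a_{0}\in H_{0}$ and invariance). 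Crucially, this replacement never appeals to (b), only to continuous convergence of the $\pi$-free factors. Dividing by $\hat{(F|_{\Sigma})}(H_{n},\pi_{n})\to I_{0}\neq0$ gives $\pi_{n}(a_{n})\to\pi_{0}(a_{0})$, which is (b).
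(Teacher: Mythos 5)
Your argument is correct, but note at the outset that the paper does not prove this proposition: it is imported verbatim from \cite{mrw:tams96}*{Proposition~3.3}, so there is no internal proof to measure you against, and the only meaningful question is whether your blind argument stands on its own. It does. You take the natural route: identify Gelfand convergence with weak-$*$ convergence tested against the dense subalgebra $\cc(\Sigma)$ (legitimate because characters are uniformly bounded), recover $H_{n}\to H_{0}$ from the $C_{0}(\so)$-module structure, and extract part~(b) from the translation identity
\begin{equation}
  \int_{H_{n}}F(a_{n}^{-1}t)\pi_{n}(t)\,d\beta^{H_{n}}(t)
  =\pi_{n}(a_{n})\int_{H_{n}}F(t)\pi_{n}(t)\,d\beta^{H_{n}}(t),
\end{equation}
which is the classical device for upgrading weak-$*$ convergence of characters paired against $L^{1}$ to uniform convergence on compacta, here executed fibrewise over $\so$. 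The non-circularity point --- that the error term $\sup_{t\in H_{n}\cap K}\bigl|F(a_{n}^{-1}t)-b(H_{n},t)\bigr|$ is insensitive to $\pi_{n}$ because $|\pi_{n}|\equiv1$ --- is exactly the crux, and you have it right. Two hypotheses should be made explicit if you write this up, though neither is a gap: (i) your ``passage to the limit'' principle requires that all the $g_{n}$ be supported in a single compact subset of $G$, which holds in each application since the functions involved are translates of a fixed $F\in\cc(G)$ by the convergent sequence $(a_{n})$ or restrictions of a fixed $b\in\cc(\Sigma)$; and (ii) the function $(H,t)\mapsto F(t)$ does lie in $\cc(\Sigma)$, but verifying that its support $\Sigma\cap(\so\times\operatorname{supp}F)$ is compact uses the properness of $p_{0}:\so\to\go$ recorded in Section~\ref{sec:preliminiaries}. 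You also quietly use that $\Sigma$ is closed in $\so\times G$ (so that $a_{n}\in H_{n}$, $a_{n}\to a_{0}$, $H_{n}\to H_{0}$ force $a_{0}\in H_{0}$) and that some $F$ with $\int_{H_{0}}F\pi_{0}\,d\beta^{H_{0}}\neq0$ exists; both are routine (for the latter, concentrate $F\ge0$ near the unit of $H_{0}$, where $\operatorname{Re}\pi_{0}>\tfrac12$, and use that $\beta^{H_{0}}$ has full support).
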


\section{The Subgroup Module}
\label{sec:subgroup-module}

Let $\xo =\cc(G*\so)$.  It is straightforward to check that
\begin{equation}
  \label{eq:2}
  \Rip<F,G>(H,t)=\int_{G}\overline{F(\gamma,H)}G(\gamma
  t,H)\,d\lambda_{p_{0}(H)}(\gamma) 
\end{equation}
is a $\cc(\Sigma)$-valued pre-inner product on $\X_{0}$ so that the
completion is a right Hilbert $\cs(\Sigma)$-module $\X$ (see
\cite{rw:morita}*{Lemma~2.16}).  Furthermore, we get a left action of
$\cc(G)$ on $\xo$ by
\begin{equation}
  \label{eq:9}
  f\cdot
  F(\gamma,H)=\int_{G}f(\eta)F(\eta^{-1}\gamma,H)\,d\lambda^{r(\gamma)}(\eta).  
\end{equation}
Just as in \cite{ionwil:pams08}*{\S2}, this action extends to a
homomorphism of $\cs(G)$ into the algebra of adjointable operators
$\L(\X)$. Therefore we can use the Rieffel machine to induce a
representation $L$ of $\cs(\Sigma)$ to a representation $\indsg(L)$
of $\cs(G)$.  In particular, if $H\in\so$ and $\pi\in\hat H$, then we
can let $L=(H,\pi)$ and form the representation $\indsg (H,\pi)$.
Then we have the following equivalence result.  The proof is routine,
so we omit it.

\begin{lemma}\label{lem-ind-equiv}
  If $H\in \so$ and $\pi\in\hat H$, then $\indgh \pi$ and
  $\indsg(H,\pi)$ are equivalent representations of $\cs(G)$.
\end{lemma}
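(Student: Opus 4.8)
The plan is to realize the equivalence by an explicit unitary intertwiner given by restriction to the fibre over $H$. Since $\pi\in\hat H$ is a character, both spaces are built on a single copy of the scalars: $\H_{\Ind\pi}$ is the completion of $\cc(G_{u})$ under the inner product \eqref{eq:5} (with $u=p_{0}(H)$), while $\indsg(H,\pi)$ acts on the completion of the internal tensor product $\X\atensor_{L}\C$, where $L=(H,\pi)$ is the character of $\cs(\Sigma)$ from \eqref{eq:10}. Because $\set{\gamma:(\gamma,H)\in G*\so}=G_{u}$, the restriction $F\mapsto F(\cdot,H)$ carries $\xo=\cc(G*\so)$ into $\cc(G_{u})$, and I would define $\Phi$ on the dense subspace $\xo\atensor\C$ by $\Phi(F\tensor1)=F(\cdot,H)$.

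First I would check that $\Phi$ is isometric. Following the convention of \eqref{eq:3}, the Rieffel inner product on $\X\atensor_{L}\C$ is $(F\tensor1\mid G\tensor1)=L\bigl(\Rip<G,F>\bigr)=\widehat{\Rip<G,F>}(H,\pi)$, and unwinding \eqref{eq:10} together with \eqref{eq:2} gives
\[
(F\tensor1\mid G\tensor1)=\int_{H}\int_{G}\overline{G(\gamma,H)}F(\gamma t,H)\pi(t)\,d\lambda_{u}(\gamma)\,d\beta^{H}(t),
\]
which is exactly \eqref{eq:5} evaluated at $\phi=F(\cdot,H)$ and $\psi=G(\cdot,H)$. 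Thus $\Phi$ preserves the two pre-inner products on the nose; in particular the value of $F\tensor1$ in $\X\atensor_{L}\C$ depends only on $F(\cdot,H)$, so $\Phi$ is well defined there, annihilates the null vectors, and descends to an isometry of the completions.

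Next I would establish density of the range and the intertwining property. Every $\phi\in\cc(G_{u})$ is the restriction to the fibre of some $F\in\cc(G*\so)$---extend $\phi$ off the closed fibre using that $G*\so$ is a second countable, hence normal, locally compact Hausdorff space and then cut down by a bump function equal to $1$ on the support of $\phi$---so $\Phi$ maps onto $\cc(G_{u})$ and has dense range. For the intertwining, the induced action is $\indsg(H,\pi)(f)(F\tensor1)=(f\cdot F)\tensor1$ with $f\cdot F$ as in \eqref{eq:9}; restricting the resulting function to the fibre over $H$ reproduces verbatim the formula \eqref{eq:4} for $\indgh\pi(f)$ acting on $\phi=F(\cdot,H)$. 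Hence $\Phi\,\indsg(H,\pi)(f)=\indgh\pi(f)\,\Phi$ on $\xo\atensor\C$, and $\Phi$ extends to the desired unitary equivalence.

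The computation is genuinely routine, and the only point needing care is the identification of the induction space $\X\atensor_{L}\C$: pairing the $\cs(\Sigma)$-valued inner product \eqref{eq:2} against the character $L=(H,\pi)$ localizes it to the fibre over $H$, where it collapses to the scalar pairing \eqref{eq:5}. Once this localization is made explicit, the whole proof is the side-by-side comparison of \eqref{eq:2} and \eqref{eq:9} with \eqref{eq:5} and \eqref{eq:4}; no further groupoid technology---in particular not the conjugation cocycle $\omega$ of Lemma~\ref{lem-3.25}---enters.
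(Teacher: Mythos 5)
Your proof is correct and supplies precisely the routine verification that the paper omits (the authors write only ``The proof is routine, so we omit it''): the unitary is restriction to the fibre over $H$, the isometry computation correctly pairs the $\cc(\Sigma)$-valued inner product \eqref{eq:2} against the character \eqref{eq:10} to recover \eqref{eq:5}, and the module actions \eqref{eq:9} and \eqref{eq:4} visibly agree on that fibre, with density of the range handled by extending functions off the closed fibre $G_{u}\times\sset{H}$ of $G*\so$. Nothing is missing, and your closing observation that the conjugation cocycle $\omega$ plays no role here is accurate.
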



Recall that if $A$ is a \cs-algebra, then we can equip the collection
$\I(A)$ of closed two-sided ideals of $A$ with the topology generated
by the sets
\begin{equation}
  \label{eq:6}
  \O_{J}=\set{I\in\I(A):J\not\subset I}
\end{equation}
where $J\in\I(A)$.  Therefore a basis for this topology consists of
finite intersections of $\set{\O_{J}:J\in \I(A)}$  with the whole
space arising as in the intersection over the empty set.  One of the
few redeeming properties of this topology is that it restricts to the
usual Jacobson topology on $\Prim A\subset \I(A)$, and then the
collection $\set{\O_{J}:J\in \I(A)}$ is actually a basis for the
Jacobson topology on $\Prim A$.

If $\X$ is a right Hilbert $B$-module and $\phi:A\to \L(\X)$ is a
homormorphism, then the Rieffel machine gives us a means of inducting
a representation $\pi$ of $B$ to a representation $\xind_{B}^{A}\pi$
of $A$.  Furthermore, we get a well-defined continuous map from
$\I(B)$ to $\I(A)$ given by
$\ker\pi\mapsto \ker\bigl(\xind_{B}^{A}\pi\bigr)$
\cite{rw:morita}*{Corollary~3.35}.  Since $\indsg$ arises from such a
Rieffel machine, and since $\Sigmah$ and $\Prim\bigl(\cs(\Sigma)\bigr)$
are homeomorphic, we have the following.

\begin{lemma}\label{lem-ind-cts}
  The map $(H,\sigma)\mapsto \ker\bigl(\indsg(H,\sigma)\bigr)$ is a
  continuous map from $\Sigmah$ to $\I(\cs(G))$.
\end{lemma}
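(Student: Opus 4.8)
The plan is to realize the map of the lemma as a composition of two continuous maps, the second of which is exactly the induction-on-ideals result \cite{rw:morita}*{Corollary~3.35} recalled just above the statement.

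First I would record that, by its very construction, $\indsg$ is the $\X$-induction $\xind_{\cs(\Sigma)}^{\cs(G)}$ determined by the right Hilbert $\cs(\Sigma)$-module $\X$ built above together with the homomorphism $\cs(G)\to\L(\X)$ coming from \eqref{eq:9}. For $(H,\sigma)\in\Sigmah$ we view $(H,\sigma)$ as the one-dimensional representation of $\cs(\Sigma)$ determined by the character \eqref{eq:10}; then $\indsg(H,\sigma)$ is precisely the Rieffel induction of that representation, so that
\[
  \ker\bigl(\indsg(H,\sigma)\bigr)=\ker\bigl(\xind_{\cs(\Sigma)}^{\cs(G)}(H,\sigma)\bigr).
\]
This identifies the map of the lemma, up to the identification of $(H,\sigma)$ with its kernel, with the induction map $\ker\pi\mapsto\ker\bigl(\xind_{\cs(\Sigma)}^{\cs(G)}\pi\bigr)$ on ideals.

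Next I would assemble the two continuity ingredients. On one hand, since $\cs(\Sigma)$ is commutative, the assignment $(H,\sigma)\mapsto\ker(H,\sigma)$ is the canonical homeomorphism of the Gelfand spectrum $\Sigmah$ onto $\Prim\bigl(\cs(\Sigma)\bigr)$; and, as noted before the statement, the topology on $\I\bigl(\cs(\Sigma)\bigr)$ generated by the sets $\O_{J}$ restricts to the Jacobson topology on $\Prim\bigl(\cs(\Sigma)\bigr)$, so the inclusion $\Prim\bigl(\cs(\Sigma)\bigr)\hookrightarrow\I\bigl(\cs(\Sigma)\bigr)$ is continuous. On the other hand, because $\indsg$ arises from the Rieffel machine, \cite{rw:morita}*{Corollary~3.35} tells us that $\ker\pi\mapsto\ker\bigl(\xind_{\cs(\Sigma)}^{\cs(G)}\pi\bigr)$ is a continuous map $\I\bigl(\cs(\Sigma)\bigr)\to\I\bigl(\cs(G)\bigr)$. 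The map of the lemma is therefore the composite
\[
  \Sigmah\longrightarrow\Prim\bigl(\cs(\Sigma)\bigr)\hookrightarrow\I\bigl(\cs(\Sigma)\bigr)\longrightarrow\I\bigl(\cs(G)\bigr),
\]
and continuity follows at once. Concretely, for $J\in\I\bigl(\cs(G)\bigr)$ the preimage of $\O_{J}$ is $\set{(H,\sigma)\in\Sigmah:J\not\subset\ker(\indsg(H,\sigma))}$, which is open by the two facts above.

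I do not expect any genuine difficulty here: the statement is essentially a bookkeeping consequence of \cite{rw:morita}*{Corollary~3.35} together with the identification $\Sigmah\cong\Prim\bigl(\cs(\Sigma)\bigr)$. The only point that warrants a line of care is the equality $\indsg(H,\sigma)=\xind_{\cs(\Sigma)}^{\cs(G)}(H,\sigma)$, that is, that the Hilbert module $\X$ and the homomorphism $\cs(G)\to\L(\X)$ used to define $\indsg$ are exactly the data to which Corollary~3.35 applies; but this holds by construction, so I would state it and move on.
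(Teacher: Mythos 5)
Your argument is correct and is exactly the paper's own (the paper simply states the lemma after noting that $\indsg$ arises from a Rieffel induction module and that $\Sigmah$ is homeomorphic to $\Prim\bigl(\cs(\Sigma)\bigr)$, citing \cite{rw:morita}*{Corollary~3.35}). You have merely spelled out the same composition of continuous maps in more detail.
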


The following observation will be of use in the sequel.

\begin{lemma}[\cite{wil:crossed}*{Lemma~8.38}] \label{lem-fix-8.38}
  Suppose that $X$ is a second countable locally compact Hausdorff
  space.  If $F$ is closed in $X$, let $I(F)$ be the ideal in
  $C_{0}(X)$ of functions vanishing on $F$.  If a sequence
  $\bigl(I(F_{n})\bigr)$ converges to $I(F)$ in
  $\I\bigl(C_{0}(X)\bigr)$ and if $x\in F$, then there is a
  subsequence $(I(F_{n_{k}}))$ and $x_{k}\in F_{n_{k}}$ such that
  $x_{k}\to x$ in $X$.\footnote{This is special case of
    \cite{wil:crossed}*{Lemma~8.38}.  Hence the result remains true
    without a separability assumption if nets are used in place of
    sequences.  It should be noted that the converse statement in
    \cite{wil:crossed}*{Lemma~8.38} is false as stated.  It should be
    amended to say that $I(F_{i})\to I(F)$ if every subnet has the
    given property.}
\end{lemma}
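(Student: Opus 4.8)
The plan is to translate the abstract convergence in $\I(C_{0}(X))$ into a statement about inclusions of closed subsets of $X$, and then to extract the desired subsequence by a diagonal argument using second countability. Recall that the closed ideals of $C_{0}(X)$ are exactly the ideals $I(E)$ as $E$ ranges over the closed subsets of $X$, and that this correspondence is inclusion-reversing: for closed sets $E$ and $F$ one has $I(E)\subset I(F)$ if and only if $F\subset E$. The nontrivial direction uses Urysohn's lemma for locally compact Hausdorff spaces: if $y\in F\setminus E$, then a function in $C_{c}(X)$ that equals $1$ at $y$ and vanishes on $E$ lies in $I(E)$ but not in $I(F)$. Unwinding the definition of the topology generated by the sets $\O_{J}$, a sequence converges to a point iff it is eventually in each subbasic open set $\O_{J}$ containing that point; hence the hypothesis $I(F_{n})\to I(F)$ says precisely that whenever $J\not\subset I(F)$ one has $J\not\subset I(F_{n})$ for all large $n$. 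Specializing $J$ to an ideal $I(E)$ and applying the correspondence, this reads: for every closed set $E$ with $F\not\subset E$ we have $F_{n}\not\subset E$ for all large $n$.

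With this dictionary in hand, fix $x\in F$ and choose a decreasing countable neighborhood basis $U_{1}\supset U_{2}\supset\cdots$ at $x$, which exists by second countability. The key claim is that for each $m$ the set $\set{n:F_{n}\cap U_{m}\neq\emptyset}$ is infinite. Granting the claim, I would diagonalize: choose $n_{1}<n_{2}<\cdots$ with $F_{n_{k}}\cap U_{k}\neq\emptyset$, pick $x_{k}$ in this intersection, and observe that $x_{k}\in U_{k}$ forces $x_{k}\to x$, which is exactly the conclusion. To prove the claim, suppose for contradiction that for some fixed $m$ one has $F_{n}\cap U_{m}=\emptyset$ for all large $n$, and set $E=X\setminus U_{m}$. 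Then $E$ is closed, and since $x\in F\cap U_{m}$ we have $F\not\subset E$. The convergence hypothesis, in the form derived above, then gives $F_{n}\not\subset E$, i.e.\ $F_{n}\cap U_{m}\neq\emptyset$, for all large $n$, contradicting the supposition.

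The only genuinely delicate point is reading off the correct notion of convergence in the topology on $\I(C_{0}(X))$ generated by the $\O_{J}$, together with the realization that it suffices to test it against the single well-chosen ideal $I(X\setminus U_{m})$ rather than against all of $\I(C_{0}(X))$ simultaneously. Everything else is soft: the inclusion-reversing correspondence is standard Gelfand theory for commutative \cs-algebras, and the passage from ``$F_{n}$ meets $U_{m}$ for infinitely many $n$'' to an honest sequence $x_{k}\to x$ is exactly where second countability (which supplies a countable neighborhood basis at $x$) enters. I note that this is also the only step that would require nets in the non-separable setting, consistent with the footnoted general version of the result.
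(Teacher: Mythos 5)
Your proof is correct and follows essentially the same route as the paper's: both arguments test convergence against the ideals $I(X\setminus U_{k})$ for a decreasing countable neighborhood basis $\sset{U_{k}}$ at $x$, translate $I(F_{n})\in\O_{I(X\setminus U_{k})}$ into $F_{n}\cap U_{k}\neq\emptyset$, and extract the subsequence. The only cosmetic differences are that you spell out the inclusion-reversing dictionary via Urysohn and phrase the key step as a contradiction rather than directly.
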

\begin{proof}
  Suppose that $I(F_{n})\to I(F)$ and $x\in F$.  Let
  $\sset{U_{k}}_{k=1}^{\infty} $ be a countable neighborhood basis at
  $x$ consisting of open sets such that $U_{k+1} \subset U_{k}$.  Let
  $J_{k}=I(X\setminus U_{k})$.  Then $I(F)\in \O_{J_{k}}$ for all $k$.
  Thus there is a $n_{1}$ such that $n\ge n_{1}$ implies
  $I(F_{n})\in \O_{J_{1}}$.  In particular, there is a
  $x_{1}\in F_{n_{1}}\cap U_{1}$.  Then if $k\ge 2$ and if we have
  choosen $x_{j}\in F_{n_{j}}$ such that
  $x_{j}\in F_{n_{j}} \cap U_{j}$ for all $j<k$, then we can find
  $n_{k}>n_{k-1}$ such that $I(F_{n_{k}})\in \O_{J_{k}}$.  Hence we
  can find $x_{k}\in F_{n_{k}}\cap U_{k}$.  Then $x_{k}\to x$ as
  required.
\end{proof}

\section{$\stabg$}
\label{sec:stabg}

\begin{definition}
  We let
  \begin{equation}
    \label{eq:13}
    \stabg=\set{(u,\pi):\text{$u\in \go$ and $\pi \in G(u)^{\wedge}$}}.
  \end{equation}
\end{definition}

\begin{remark}
  The map $(u,\pi)\mapsto \bigl(G(u),\pi\bigr)$ identifies $\stabg$
  with a subset of $\Sigmah$, but we need a more subtle topology on
  $\stabg$ than the relative topology.
\end{remark}

We let $G'=\set{\gamma\in G:r(\gamma)=s(\gamma)}$ be the isotropy
subgroupoid of $G$.  Then $G'$ is closed in $G$.  Let $U$ be a
pre-compact open subset of $\go$ and $K$ a compact subset of $G'$.  If
$V$ is open in the unit circle $\T$, then we let
\begin{equation}
  \label{eq:14}
  \mathcal O(U,K,V)=\set{(u,\pi)\in\stabg:\text{$u\in U$ and
      $\pi(G(u)\cap K)\subset V$}}
\end{equation}
with the understanding that if $G(u)\cap K=\emptyset$, then
$\pi(G(u)\cap K) \subset V$ holds vacuously.  Then we let $\rs$ be the
collection of all $\mathcal (U,K,V)$ for $U$, $K$, and $V$ as above.
Since $\rs$ covers $\stabg$, $\rs$ generates a topology $\ts$ for
$\stabg$ with a basis consisting of finite intersections of elements
of $\rs$.

\begin{lemma}
  \label{lem-ts-sec-count} The topology $\ts$ is second countable.
\end{lemma}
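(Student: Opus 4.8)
The plan is to exhibit a countable subfamily $\rs_{0}\subseteq\rs$ generating the same topology; since the finite intersections of a countable subbasis again form a countable family, this is exactly what is needed for second countability. First I would fix countable bases for the three ingredients of the sets $\O(U,K,V)$. As $\go$ is second countable and locally compact, choose a countable basis $\set{U_{i}}$ of precompact open subsets of $\go$. Since $G'$ is closed in the second countable space $G$, it is itself second countable and locally compact; choose a countable basis $\set{W_{m}}$ of $G'$ consisting of precompact open sets, and set $K_{l}:=\overline{W_{l}}$, a countable family of compact subsets of $G'$. Finally choose a countable basis $\set{V_{j}}$ of open subsets of $\T$, and let $\rs_{0}$ be the countable collection of all $\O(U_{i},K_{l},V_{j})$. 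Each of these lies in $\rs$, so the generated topology $\ts_{0}$ satisfies $\ts_{0}\subseteq\ts$.

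The substance of the argument is the reverse inclusion, namely that every subbasic set $\O(U,K,V)$ is $\ts_{0}$-open. Fix $(u_{0},\pi_{0})\in\O(U,K,V)$, so $u_{0}\in U$ and $\pi_{0}(\gamma)\in V$ for every $\gamma$ in the compact set $C:=G(u_{0})\cap K$. Pick a basis element $U_{i}$ with $u_{0}\in U_{i}\subseteq U$ and set $K':=K\cap r^{-1}(\overline{U_{i}})$, which is compact because $K$ is compact and $r$ is continuous. I would cover $K'$ pointwise: for $\gamma_{0}\in K'$ with $\gamma_{0}\notin G(u_{0})$, use that $G(u_{0})$ is closed in $G'$ to choose a basis set $W$ with $\gamma_{0}\in W$ and $\overline{W}\cap G(u_{0})=\emptyset$, paired with any fixed basis $V_{j}\subseteq V$; for $\gamma_{0}\in K'\cap G(u_{0})\subseteq C$, first choose a basis $V_{j}$ with $\pi_{0}(\gamma_{0})\in V_{j}\subseteq V$, and then, using continuity of $\pi_{0}$ on $G(u_{0})$ together with local compactness of $G'$, a basis set $W$ with $\gamma_{0}\in W$ and $\pi_{0}\bigl(\overline{W}\cap G(u_{0})\bigr)\subseteq V_{j}$. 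Compactness of $K'$ yields a finite subcover $\overline{W_{m_{1}}},\dots,\overline{W_{m_{N}}}$ with associated $V_{j_{1}},\dots,V_{j_{N}}$, all contained in $V$.

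It then remains to verify that $W:=\bigcap_{p=1}^{N}\O(U_{i},\overline{W_{m_{p}}},V_{j_{p}})$ is a $\ts_{0}$-open neighborhood of $(u_{0},\pi_{0})$ contained in $\O(U,K,V)$. Membership of $(u_{0},\pi_{0})$ holds because each factor was arranged to contain it, either vacuously when $\overline{W_{m_{p}}}\cap G(u_{0})=\emptyset$ or by the continuity choice otherwise. For the containment, take any $(u,\pi)\in W$ and any $\gamma\in G(u)\cap K$; then $r(\gamma)=u\in U_{i}$ forces $\gamma\in K'$, so $\gamma$ lies in some $\overline{W_{m_{p}}}$ and hence $\pi(\gamma)\in V_{j_{p}}\subseteq V$. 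Thus $\pi(G(u)\cap K)\subseteq V$ and $u\in U_{i}\subseteq U$, giving $(u,\pi)\in\O(U,K,V)$. This proves $\ts\subseteq\ts_{0}$, so $\ts=\ts_{0}$ is generated by a countable subbasis and is therefore second countable.

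I expect the main obstacle to be precisely the covering step, forced by the parameter $K$ ranging over the uncountable family of compact subsets of $G'$. One cannot simply dominate a given $K$ by a member of the countable family, since enlarging $K$ would add points of $G(u_{0})$ on which $\pi_{0}$ need not take values in $V$, destroying the condition at $(u_{0},\pi_{0})$. The resolution is to cover only the relevant compact slice $K'=K\cap r^{-1}(\overline{U_{i}})$ and, at each point, to shrink the basic neighborhood $W$ enough that its closure either misses $G(u_{0})$ entirely or keeps $\pi_{0}$ inside a single basic $V_{j}\subseteq V$; the freedom to pass to finite intersections within $\rs_{0}$ is exactly what makes this localization admissible.
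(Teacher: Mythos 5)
Your proof is correct and follows essentially the same strategy as the paper's: cover the relevant compact set pointwise by closures of basic open subsets of $G'$, each chosen by the same dichotomy (its closure either misses $G(u_{0})$ entirely or has $\pi_{0}$-image inside a basic subset of $V$), and extract a finite subcover. The only real difference is packaging --- the paper takes the \emph{union} of the finitely many closures to produce a single larger compact set $K'\supset K$ lying in a countable family, so that one subbasic set $\mathcal O(U_{n},K',V_{m})$ suffices, whereas you take a finite \emph{intersection} of subbasic sets with the individual closures; this also shows that, contrary to your closing remark, a careful domination of $K$ by a member of a countable family is in fact possible, and is exactly what the paper does.
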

\begin{proof}
  Let $\set{U_{n}}$ and $\set{V_{m}}$ be countable bases for $\go$ and
  $\T$, respectively.  Let $\set{W_{j}}$ be a countable basis of
  pre-compact open subsets of $G'$ and let $K_{j}=\overline{W_{j}}$.
  Then the collection $\mathcal C(G')_{f}$ of finite unions of the
  $K_{j}$ is also countable.

  Given $(u,\pi)\in \mathcal O(U_{n},K,V_{m})$, it suffices to find
  $K'\in \mathcal C(G')_{f}$ such that $K\subset K'$ and
  $\pi\bigl(G(u)\cap K'\bigr) \subset V_{m}$ as this implies
  \begin{equation}
    \label{eq:15}
    (u,\pi)\in \mathcal O(U_{n},K',V_{m}) \subset \mathcal
    O(U_{n},K,V_{m}). 
  \end{equation}

  For each $a\in K$, we claim there is a $j$ such that $a\in W_{j}$
  and $\pi\bigl(G(u)\cap K_{j}\bigr)\subset V_{m}$.  If
  $a\notin G(u)$, then by regularity there is a $U\in G'$ such that
  $a\in U\subset \overline U \subset G'\setminus G(u)$.  Then we can
  find $W_{j}$ such that $a\in W_{j}\subset U$ and $K_{j}$ will do.
  If $a\in G(u)$, then the continuity of $\pi$ implies there is a
  neighborhood $V$ of $a$ in $G'$ such that
  $\pi\bigl(G(u)\cap V\bigr) \subset V_{m}$.  Again regularity implies
  there is a $W_{j}$ such that $a\in W_{j}\subset K_{j} \subset V$.
  This proves the claim.

  Since $K$ is compact, there are $j_{1},\dots,j_{l}$ such that
  $K\subset \bigcup K_{j_{s}}$ and
  $\pi\bigl(G(u)\cap K_{j_{s}}\bigr) \subset V_{m}$.  Thus we can let
  $K'=\bigcup K_{j_{s}}$.
\end{proof}

We can now give an elegant sequential characterization of $\ts$ in
terms of our sequential characterization of the Gelfand topology on
$\Sigmah$ in Proposition~\ref{prop-mrw3.3}.

\begin{prop}
  \label{prop-conv-ts} A sequence $\bigl((u_{n},\pi_{n})\bigr)$
  converges to $(u_{0},\pi_{0})$ in $\bigl(\stabg,\ts\bigr)$ if and
  only if every subsequence $\bigl((u_{n_{k}},\pi_{n_{k}})\bigr)$ has
  a subsequence $\bigl((u_{n_{k_{j}}},\pi_{n_{k_{j}}})\bigr)$ such
  that
  \begin{enumerate}
  \item $u_{n_{k_{j}}}\to u_{0}$ in $\go$,
  \item there is a $H\in\so$ such that $G(u_{n_{k_{j}}})\to H $ in
    $\so$, and
  \item
    $\bigl(G(u_{n_{k_{j}}}),\pi_{n_{k_{j}}}\bigr)\to (H,\pi_{0}\restr
    H)$ in $\Sigmah$.
  \end{enumerate}
\end{prop}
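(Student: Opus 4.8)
The plan is to reduce everything to two facts: the behaviour of the subbasic sets $\mathcal O(U,K,V)$ under $\ts$-convergence, and the sequential description of the Gelfand topology on $\Sigmah$ in Proposition~\ref{prop-mrw3.3}. The subsequence formulation is unavoidable because distinct subsequences of $(u_n)$ may have isotropy groups $G(u_n)$ accumulating at \emph{different} closed subgroups $H\subset G(u_0)$; the Fell limit $H$ is not pinned down by $u_0$. Throughout I will use the elementary fact that in \emph{any} topological space $x_n\to x$ if and only if every subsequence has a further subsequence converging to $x$ (for the nonobvious direction: if $x_n\not\to x$, some neighborhood of $x$ is avoided infinitely often, and the corresponding subsequence has no further subsequence converging to $x$). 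I first record that the coordinate projection $(u,\pi)\mapsto u$ is continuous: since $\emptyset$ is a compact subset of $G'$, the set $\mathcal O(U,\emptyset,\T)=\set{(u,\pi):u\in U}$ belongs to $\rs$, so $\ts$-convergence of $\bigl((u_n,\pi_n)\bigr)$ already forces $u_n\to u_0$. Hence condition (a) is automatic in both directions, and I only have to manufacture subsequences realizing (b) and (c).

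For the ``if'' direction, by the subsequence principle it is enough to prove the sublemma: \emph{if $u_n\to u_0$, $G(u_n)\to H$ in $\so$, and $\bigl(G(u_n),\pi_n\bigr)\to(H,\pi_0\restr H)$ in $\Sigmah$, then $(u_n,\pi_n)\to(u_0,\pi_0)$ in $\ts$} (note $p_{0}(H)=\lim u_n=u_0$, so $H\subset G(u_0)$ and $\pi_0\restr H$ is defined). To prove $\ts$-convergence it suffices, since finite intersections of elements of $\rs$ form a basis, to show each subbasic neighborhood $\mathcal O(U,K,V)$ of $(u_0,\pi_0)$ eventually contains $(u_n,\pi_n)$. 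The requirement $u_n\in U$ holds eventually. If $\pi_n\bigl(G(u_n)\cap K\bigr)\subset V$ failed infinitely often I would choose $a_n\in G(u_n)\cap K$ with $\pi_n(a_n)\notin V$; compactness of $K$ gives $a_{n_j}\to a_0\in K$, the Fell convergence $G(u_{n_j})\to H$ forces $a_0\in H$, and continuity of $r,s$ gives $a_0\in G(u_0)$. Thus $a_0\in G(u_0)\cap K$, so $\pi_0(a_0)\in V$, and Proposition~\ref{prop-mrw3.3}(b) applied to $\bigl(G(u_{n_j}),\pi_{n_j}\bigr)\to(H,\pi_0\restr H)$ yields $\pi_{n_j}(a_{n_j})\to\pi_0(a_0)\in V$, contradicting $\pi_{n_j}(a_{n_j})\notin V$ as $V$ is open.

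For the ``only if'' direction, fix any subsequence; it still $\ts$-converges to $(u_0,\pi_0)$ and satisfies $u_n\to u_0$. The set $C=\set{u_n}\cup\set{u_0}$ is compact, so by properness of $p_{0}$ (Section~\ref{sec:preliminiaries}) all the subgroups $G(u_n)$ lie in the compact set $p_{0}^{-1}(C)\subset\so$; since $\so$ is second countable this set is sequentially compact, and a further subsequence satisfies $G(u_{n_j})\to H$ with $p_{0}(H)=u_0$, giving (b). It remains to establish (c), i.e.\ Proposition~\ref{prop-mrw3.3}(b): given $a_j\in G(u_{n_j})$ with $a_j\to a_0$ (so $a_0\in H\subset G(u_0)$), show $\pi_{n_j}(a_j)\to\pi_0(a_0)$. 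If this failed, compactness of $\T$ lets me pass to a sub-subsequence with $\pi_{n_j}(a_j)\to z\neq\pi_0(a_0)$. Choose open $V\ni\pi_0(a_0)$ in $\T$ with $z\notin\overline V$; by continuity of $\pi_0$ on $G(u_0)$ there is an open $W\ni a_0$ in $G'$ with $\pi_0\bigl(G(u_0)\cap W\bigr)\subset V$, and by local compactness a compact neighborhood $K\subset W$ of $a_0$. Then for any pre-compact open $U\ni u_0$ we have $(u_0,\pi_0)\in\mathcal O(U,K,V)$, so $\ts$-convergence forces $\pi_{n_j}\bigl(G(u_{n_j})\cap K\bigr)\subset V$ eventually; since $a_j\to a_0\in\operatorname{int}K$ we get $a_j\in G(u_{n_j})\cap K$ eventually, whence $\pi_{n_j}(a_j)\in V$ eventually, contradicting $\pi_{n_j}(a_j)\to z\notin\overline V$.

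The main obstacle is the ``only if'' direction. The Fell limit $H$ cannot be read off from $(u_0,\pi_0)$ and must instead be extracted by a compactness argument resting on the properness of $p_{0}$---this is precisely why the statement quantifies over subsequences rather than asserting (b) and (c) for the whole sequence. Having produced $H$, the delicate point in verifying (c) is to engineer a subbasic neighborhood $\mathcal O(U,K,V)$ with $K$ small enough that $\pi_0$ remains inside $V$ on $G(u_0)\cap K$, yet still a genuine neighborhood of the target point $a_0$, so that the purely topological $\ts$-convergence can be converted into the pointwise character estimate demanded by Proposition~\ref{prop-mrw3.3}.
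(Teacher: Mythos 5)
Your proof is correct and follows essentially the same route as the paper's: continuity of the projection $(u,\pi)\mapsto u$ via a subbasic set whose $K$-condition is vacuous, properness of $p_{0}$ to extract the Fell limit $H$ along a subsequence, and in each direction a contradiction argument that converts membership in a well-chosen $\mathcal O(U,K,V)$ into the pointwise character condition of Proposition~\ref{prop-mrw3.3}. The only cosmetic differences are your use of $K=\emptyset$ where the paper uses $K=\sset{u_{0}}$, your packaging of the ``if'' direction through the standard subsequence principle, and your choice of $K$ as a compact neighborhood of $a_{0}$ in $G'$ rather than the paper's $r^{-1}(\overline U)\cap\overline W\cap G'$.
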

\begin{remark}
  Note that the $H$ in part~(b) must satisfy $H\subset G(u_{0})$ and
  can depend on the subsequence.
\end{remark}

\begin{proof}
  Suppose that $(u_{n},\pi_{n})\to (u_{0},\pi_{0})$ in $\stabg$.  If
  $U$ is any pre-compact neighborhood of $u_{0}$, then
  $\mathcal O(U,\sset{u_{0}},\T)$ is a neighborhood of
  $(u_{0},\pi_{0})$.  Hence $u_{n}\to u_{0}$ in $\go$.  Hence (a)
  holds for any subsequence.  We can assume that we have already
  replaced $\bigl((u_{n},\pi_{n})\bigr)$ with a subsequence.  Since
  $p_{0}:\so\to \go$ is a proper map, we can pass to another
  subsequence, relabel, and assume that
  $G(u_{n})\to H \subset G(u_{0})$.  Thus part~(b) will hold for any
  further subsequence.

  Suppose now that $a_{n}\in G(u_{n})$ is such that $a_{n}\to a_{0}$
  in $G$.  Since $G(u_{n})\to H$, we have $a_{0}\in H$.  If it is not
  the case that $\pi_{n}(a_{n})\to \pi_{0}(a_{0})$, then we can pass
  to a subsequence, relabel, and assume that there is a neighborhood
  $V$ of $\pi_{0}(a_{0})$ such that $\pi_{n}(a_{n})\notin V$ for all
  $n$.  Let $W$ be a pre-compact neighborhood of $a_{0}$ in $G$ such
  that $\pi_{0}\bigl(G(u_{0})\cap \overline W\bigr) \subset V$.  Then
  $r(W)$ is a neighborhood of $u_{0}$ in $\go$, and there is a
  pre-compact open set $U$ in $\go$ such that
  $u_{0}\in U \subset \overline U \subset r(W)$.  Then
  $r^{-1}(\overline U)\cap \overline W$ is compact as is
  $K=r^{-1}(\overline U)\cap \overline W\cap G'$.  Since
  $G(u_{0}) \cap K\subset G(u_{0})\cap \overline W$, we have
  $(u_{0},\pi_{0}) \in \mathcal O(U,K,V)$.  Since
  $(u_{n},\pi_{n})\to (u_{0},\pi_{0})$, we eventually have
  $a_{n}\in G(u_{n})\cap K$ and
  $(u_{n},\pi_{n})\in \mathcal O(U,K,V)$.  But then
  $\pi_{n}\bigl(G(u_{n})\cap K\bigr)\subset V$ contradicts
  $\pi_{n}(a_{n})\notin V$.  Therefore
  $\pi_{n}(a_{n})\to \pi_{0}(a_{0})$ and
  $\bigl(G(u_{n}),\pi_{n}\bigr)\to (H,\pi_{0}\restr H)$ by
  Proposition~\ref{prop-mrw3.3}.

  Conversely, suppose that $\bigl((u_{n},\pi_{n})\bigr)$ is a sequence
  such that every subsequence has a subsequence satisfying (a), (b),
  and (c) with respect to $(u_{0},\pi_{0})$.  Suppose that
  $(u_{0},\pi_{0}) \in \mathcal O(U,K,V)$.  It will suffice to see
  that $\bigl((u_{n},\pi_{n})\bigr)$ is eventually in
  $\mathcal O(U,K,V)$.  Suppose not.  Then we can pass to a
  subsequence that is never in $O(U,K,V)$.  Moreover, we can pass to
  additional subsequences if necessary, relabel, and assume that
  $u_{n}\to u_{0}$, $G(u_{n})\to H\in\so$, and that property~(c)
  holds.  We can also assume $u_{n}\in U$ for all $n$.

  Since $(u_{n},\pi_{n}) \notin \mathcal O(U,K,V)$ and $u_{n}\in U$,
  we must have $\pi_{n}\bigl(G(u_{n})\cap K\bigr) \not\subset V$.
  Thus there is a $a_{n}\in G(u_{n})\cap K$ such that
  $\pi(a_{n})\notin V$.  Since $K$ is compact, we can pass to another
  subsequence, relabel, and assume that $a_{n}\to a_{0}$.  Then by
  assumption $\pi_{n}(a_{n})\to \pi_{0}(a_{0})\in V$.  This leads to a
  contradiction and completes the proof.
\end{proof}

As a corollary of the proof, we have the following observation.

\begin{cor}
  \label{cor-good-top1} If $(u_{n},\pi_{n})\to (u_{0},\pi_{0})$ in
  $\bigl(\stabg,\ts\bigr)$, then $u_{n}\to u_{0}$.
\end{cor}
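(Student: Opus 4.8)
The plan is to extract the opening move from the proof of Proposition~\ref{prop-conv-ts}: for each pre-compact open neighborhood $U$ of $u_{0}$ in $\go$, I would produce a single basic open set for $\ts$ that contains $(u_{0},\pi_{0})$ but whose defining condition constrains only the base point and leaves the character completely free. The natural candidate is $\mathcal O(U,\sset{u_{0}},\T)$, obtained by taking $K=\sset{u_{0}}$ and $V=\T$ in \eqref{eq:14}.

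First I would check that this is a legitimate member of the generating family $\rs$. The only point requiring any care is that $K$ must be a compact subset of $G'$, and here $\sset{u_{0}}$ qualifies because every unit lies in the isotropy subgroupoid, i.e.\ $\go\subset G'$ (for $u_{0}\in\go$ we have $r(u_{0})=u_{0}=s(u_{0})$). With $V=\T$ the whole circle, the membership condition $\pi\bigl(G(u)\cap\sset{u_{0}}\bigr)\subset\T$ holds for \emph{every} $(u,\pi)\in\stabg$: it is vacuous unless $u_{0}\in G(u)$, which forces $u=u_{0}$, and in that case $\pi(u_{0})=1\in\T$. Hence
\begin{equation*}
  \mathcal O(U,\sset{u_{0}},\T)=\set{(u,\pi)\in\stabg:u\in U},
\end{equation*}
and in particular this set contains $(u_{0},\pi_{0})$, so it is a $\ts$-neighborhood of that point.

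Finally, since $(u_{n},\pi_{n})\to(u_{0},\pi_{0})$ in $\ts$, the sequence is eventually inside $\mathcal O(U,\sset{u_{0}},\T)$, which by the displayed identity means $u_{n}\in U$ for all large $n$. As $U$ ranges over the pre-compact open neighborhoods of $u_{0}$, these form a neighborhood basis in the locally compact Hausdorff space $\go$, so we conclude $u_{n}\to u_{0}$. I expect no real obstacle here: the entire content is the elementary set identity above, and the argument simply isolates the first paragraph of the proof of Proposition~\ref{prop-conv-ts}, where this convergence was already observed en route to the stronger subsequential characterization.
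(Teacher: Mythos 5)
Your argument is correct and is essentially identical to the paper's: the corollary is drawn directly from the first paragraph of the proof of Proposition~\ref{prop-conv-ts}, which uses precisely the neighborhood $\mathcal O(U,\sset{u_{0}},\T)$ to force $u_{n}\to u_{0}$. Your verification that $\sset{u_{0}}$ is a compact subset of $G'$ and that the character condition is vacuously satisfied fills in the details the paper leaves implicit, but the route is the same.
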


We have a natural action of $G$ on $\Sigmah$ given by
\begin{equation}
  \label{eq:16}
  \gamma\cdot (H,\pi)=(\gamma\cdot H,\gamma\cdot
  \pi)
\end{equation}
where $p_{0}(H)=s(\gamma)$ and $\gamma\cdot \pi(a)=\pi(\gamma^{-1}a\gamma)$.

\begin{lemma}\label{lem-act-sigma}
  The $G$-action on $\Sigmah$ defined by \eqref{eq:16} is continuous.
\end{lemma}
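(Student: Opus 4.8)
The plan is to verify continuity by checking sequential continuity, which suffices because the domain $G*\Sigmah=\set{(\gamma,(H,\pi)):s(\gamma)=p_{0}(H)}$ and the target $\Sigmah$ are both second countable, locally compact, and Hausdorff, hence metrizable. So I would begin with a convergent sequence $\bigl(\gamma_{n},(H_{n},\pi_{n})\bigr)\to\bigl(\gamma_{0},(H_{0},\pi_{0})\bigr)$ in $G*\Sigmah$, which means $\gamma_{n}\to\gamma_{0}$ in $G$ and $(H_{n},\pi_{n})\to(H_{0},\pi_{0})$ in $\Sigmah$, and aim to show $(\gamma_{n}\cdot H_{n},\gamma_{n}\cdot\pi_{n})\to(\gamma_{0}\cdot H_{0},\gamma_{0}\cdot\pi_{0})$. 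The natural tool is Proposition~\ref{prop-mrw3.3}, which reduces the goal to two tasks: first, that $\gamma_{n}\cdot H_{n}\to\gamma_{0}\cdot H_{0}$ in $\so$; and second, that whenever $b_{n}\in\gamma_{n}\cdot H_{n}$ with $b_{n}\to b_{0}$, one has $(\gamma_{n}\cdot\pi_{n})(b_{n})\to(\gamma_{0}\cdot\pi_{0})(b_{0})$.

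For the first task, I would establish that the conjugation action of $G$ on the Fell space $\so$ is continuous. Using the standard sequential characterization of the Fell topology, this amounts to two elementary verifications. Given $c_{0}=\gamma_{0}a_{0}\gamma_{0}^{-1}\in\gamma_{0}\cdot H_{0}$ with $a_{0}\in H_{0}$, I pick $a_{n}\in H_{n}$ with $a_{n}\to a_{0}$ (possible since $H_{n}\to H_{0}$) and set $c_{n}=\gamma_{n}a_{n}\gamma_{n}^{-1}\in\gamma_{n}\cdot H_{n}$, so that joint continuity of the groupoid operations gives $c_{n}\to c_{0}$. Conversely, if $c_{n_{k}}\in\gamma_{n_{k}}\cdot H_{n_{k}}$ with $c_{n_{k}}\to c_{0}$, then $a_{n_{k}}:=\gamma_{n_{k}}^{-1}c_{n_{k}}\gamma_{n_{k}}\in H_{n_{k}}$ converges to $\gamma_{0}^{-1}c_{0}\gamma_{0}$, which therefore lies in $H_{0}$, whence $c_{0}\in\gamma_{0}\cdot H_{0}$. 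Throughout, the matchings $s(\gamma_{n})=p_{0}(H_{n})$ guarantee that the conjugations are defined.

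For the second task, I would exploit the fact that conjugation converts the problem into one about the \emph{un}-conjugated data, to which Proposition~\ref{prop-mrw3.3} applies directly. Given $b_{n}\in\gamma_{n}\cdot H_{n}=\gamma_{n}H_{n}\gamma_{n}^{-1}$ with $b_{n}\to b_{0}$, set $a_{n}:=\gamma_{n}^{-1}b_{n}\gamma_{n}\in H_{n}$; continuity of multiplication and inversion gives $a_{n}\to a_{0}:=\gamma_{0}^{-1}b_{0}\gamma_{0}\in H_{0}$. Since $(H_{n},\pi_{n})\to(H_{0},\pi_{0})$ in $\Sigmah$, condition~(ii) of Proposition~\ref{prop-mrw3.3} yields $\pi_{n}(a_{n})\to\pi_{0}(a_{0})$. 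But by the definition of the action in \eqref{eq:16} this is precisely $(\gamma_{n}\cdot\pi_{n})(b_{n})\to(\gamma_{0}\cdot\pi_{0})(b_{0})$, which is what was needed.

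The only place demanding genuine care is the first task: although the computations are routine, one must invoke the correct sequential characterization of the Fell topology on $\so$ and stay attentive to composability, since conjugation by $\gamma_{n}$ carries $G(s(\gamma_{n}))$ isomorphically onto $G(r(\gamma_{n}))$ and these fibres vary with $n$. Everything else is a direct application of Proposition~\ref{prop-mrw3.3} together with joint continuity of the groupoid operations.
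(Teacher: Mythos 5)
Your proof is correct and follows essentially the same route as the paper's: reduce to sequences, verify the two conditions of Proposition~\ref{prop-mrw3.3} for the conjugated data, and handle the character condition by pulling $b_{n}\in\gamma_{n}\cdot H_{n}$ back to $a_{n}=\gamma_{n}^{-1}b_{n}\gamma_{n}\in H_{n}$ and applying Proposition~\ref{prop-mrw3.3} to the unconjugated sequence. The only difference is that you spell out the Fell-topology verification of $\gamma_{n}\cdot H_{n}\to\gamma_{0}\cdot H_{0}$, where the paper simply cites \cite{wil:toolkit}*{Lemma~3.22}.
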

\begin{proof}
  Suppose $\gamma_{n}\to\gamma$ in $G$ and
  $(H_{n},\pi_{n})\to (H,\pi)$ in $\Sigmah$ with
  $p_{0}(H_{n})=s(\gamma_{n})$.  Using \cite{wil:toolkit}*{Lemma~3.22}
  for example, it is clear that
  $\gamma_{n}\cdot H_{n}\to \gamma\cdot H$.  Suppose that
  $a_{n}\in \gamma\cdot H_{n}$ and that $a_{n}\to a_{0}$.  Then
  $\gamma_{n}^{-1}a_{n}\gamma_{n}\in H_{n}$ and converges to
  $\gamma^{-1} a_{0}\gamma$.  Then Proposition~\ref{prop-mrw3.3}
  implies that
  $\gamma_{n}\cdot \pi_{n}(a_{n})=\pi_{n}(\gamma_{n}^{-1}a_{n}
  \gamma_{n})\to \pi(\gamma^{-1}a_{0}\gamma)=\gamma\cdot \pi(a_{0})$.
  Another application of Proposition~\ref{prop-mrw3.3} implies that
  $\gamma_{n}\cdot (H_{n},\pi_{n})\to \gamma\cdot (H,\pi)$ as
  required.
\end{proof}

Since $\gamma\cdot G(s(\gamma))=G(r(\gamma))$, we get a similar action
of $G$ on $\stabg$:
$$\gamma\cdot (s(\gamma),\pi)=(r(\gamma),\gamma\cdot \pi).$$

\begin{cor}
  \label{cor-act-stab} The $G$-action on $\stabg$ is continuous.
\end{cor}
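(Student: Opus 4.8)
The plan is to verify continuity sequentially, which is legitimate because $(\stabg,\ts)$ is second countable by Lemma~\ref{lem-ts-sec-count}. So suppose $\gamma_{n}\to\gamma_{0}$ in $G$ and $(u_{n},\pi_{n})\to(u_{0},\pi_{0})$ in $\stabg$ with $s(\gamma_{n})=u_{n}$ for all $n$; the goal is to show that the images under the action, $\gamma_{n}\cdot(u_{n},\pi_{n})=(r(\gamma_{n}),\gamma_{n}\cdot\pi_{n})$, converge to $(r(\gamma_{0}),\gamma_{0}\cdot\pi_{0})$ in $\stabg$. By the subsequential criterion of Proposition~\ref{prop-conv-ts}, it suffices to show that every subsequence of $\bigl(\gamma_{n}\cdot(u_{n},\pi_{n})\bigr)$ has a further subsequence satisfying conditions (a), (b), and (c) relative to $(r(\gamma_{0}),\gamma_{0}\cdot\pi_{0})$.

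Fix such a subsequence and relabel so that it is the whole sequence. Applying Proposition~\ref{prop-conv-ts} to the convergent sequence $(u_{n},\pi_{n})\to(u_{0},\pi_{0})$ and passing to a further subsequence (relabel again), we may assume there is an $H\in\so$ with $H\subset G(u_{0})$ such that $u_{n}\to u_{0}$, $G(u_{n})\to H$ in $\so$, and $(G(u_{n}),\pi_{n})\to(H,\pi_{0}\restr H)$ in $\Sigmah$. I claim this subsequence does the job. Condition (a) holds because $r$ is continuous, so $r(\gamma_{n})\to r(\gamma_{0})$. For condition (b), note $G(r(\gamma_{n}))=\gamma_{n}\cdot G(u_{n})$, and the continuity of the $G$-action on $\so$ (as invoked in Lemma~\ref{lem-act-sigma}) gives $G(r(\gamma_{n}))\to\gamma_{0}\cdot H=:H'$.

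For condition (c)---the heart of the argument---I apply the continuity of the $G$-action on $\Sigmah$ from Lemma~\ref{lem-act-sigma}: since $\gamma_{n}\to\gamma_{0}$ and $(G(u_{n}),\pi_{n})\to(H,\pi_{0}\restr H)$, we obtain
\begin{equation*}
  \bigl(G(r(\gamma_{n})),\gamma_{n}\cdot\pi_{n}\bigr)=\gamma_{n}\cdot\bigl(G(u_{n}),\pi_{n}\bigr)\to\gamma_{0}\cdot(H,\pi_{0}\restr H)=\bigl(H',\gamma_{0}\cdot(\pi_{0}\restr H)\bigr)
\end{equation*}
in $\Sigmah$. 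It remains only to identify the second coordinate of the limit with $(\gamma_{0}\cdot\pi_{0})\restr{H'}$: if $a\in H'=\gamma_{0}\cdot H$, then $\gamma_{0}^{-1}a\gamma_{0}\in H\subset G(u_{0})$, so
\begin{equation*}
  \bigl(\gamma_{0}\cdot(\pi_{0}\restr H)\bigr)(a)=\pi_{0}(\gamma_{0}^{-1}a\gamma_{0})=(\gamma_{0}\cdot\pi_{0})(a)=\bigl((\gamma_{0}\cdot\pi_{0})\restr{H'}\bigr)(a).
\end{equation*}
Hence the relabelled subsequence satisfies (a), (b), and (c) with respect to $(r(\gamma_{0}),\gamma_{0}\cdot\pi_{0})$, and Proposition~\ref{prop-conv-ts} delivers the desired convergence, proving continuity of the action.

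The main obstacle is conceptual rather than computational: because $\ts$ is strictly finer than the relative topology $\Sigmah$ induces on $\stabg$, one cannot simply restrict Lemma~\ref{lem-act-sigma} and be done. Instead everything must be routed through the subsequential criterion of Proposition~\ref{prop-conv-ts}, with the auxiliary group $H$ (which depends on the subsequence) carried along. Once that framework is in place, the only genuine verification is the compatibility of restriction with conjugation, namely $\gamma_{0}\cdot(\pi_{0}\restr H)=(\gamma_{0}\cdot\pi_{0})\restr{\gamma_{0}\cdot H}$, which the displayed computation settles.
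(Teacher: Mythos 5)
Your proposal is correct and follows essentially the same route as the paper: pass to subsequences, use the convergence criterion of Proposition~\ref{prop-conv-ts}, apply Lemma~\ref{lem-act-sigma}, and identify $\gamma_{0}\cdot(\pi_{0}\restr H)$ with $(\gamma_{0}\cdot\pi_{0})\restr{\gamma_{0}\cdot H}$. You simply spell out the verification of conditions (a)--(c) and the restriction/conjugation compatibility in more detail than the paper does.
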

\begin{proof}
  Suppose that $\gamma_{n}\to \gamma$ in $G$ while
  $(s(\gamma_{n}),\pi_{n})\to (s(\gamma),\pi)$ in $\stabg$.  We need
  to show that
  $(r(\gamma_{n}),\gamma_{n}\cdot \pi_{n})\to (r(\gamma),\gamma\cdot
  \pi)$ in $\stabg$.  We can assume that we have already passed to a
  subsequence and relabeled.  Then we can pass to a subsequence,
  relabel, and assume that $G(s(\gamma_{n}))\to H$ and that
  $\bigl(G(s(\gamma_{n})),\pi_{n}\bigr)\to (H,\pi\restr H)$ in
  $\Sigmah$.  But then Lemma~\ref{lem-act-sigma} implies
  $\gamma_{n}\cdot \bigl(G(s(\gamma_{n})),\pi_{n}\bigr) \to
  \bigl(\gamma\cdot H,\gamma\cdot \pi\restr H\bigr)=(\gamma\cdot
  H,(\gamma\cdot \pi)\restr{\gamma\cdot H})$, and the result follows.
\end{proof}

Recall that if $X$ is a topological space, then we write $(X)^{\sim}$
for the ``$T_{0}$-ization'' of $X$ as defined in
\cite{wil:crossed}*{Definition~6.9}.  Thus $(X)^{\sim}=X/\!\!\sim$
where $x\sim y$ if and only if
$\overline{\sset x}=\overline{\sset y}$.  We give $(X)^{\sim}$ the
quotient topology with respect to the natural map $q:X\to (X)^{\sim}$.
This is the largest topology making $q$ continuous.  Since
\begin{align}
  \label{eq:43a}
  \tau:=\set{V\subset (X)^{\sim}:\text{$q^{-1}(V)$ is  open in $X$}}
\end{align}
is a topology on $(X)^{\sim}$ making $q$ continuous, it follows that
$V\subset (X)^{\sim}$ is open if and only if $q^{-1}(V)$ is open in
$X$.  The space $(X)^{\sim}$ is always $T_{0}$, and if $f:X\to Y$ is a
continuous map into a $T_{0}$-space $Y$, then $f$ factors through a
continuous map $f':(X)^{\sim}\to Y$ given by $f'(q(x))=f(x)$
\cite{wil:crossed}*{Lemma~6.10}.

In the sequel, we will focus on the $T_{0}$-izaion, $\stabggt$, of the
orbit space $\stabgg$.  Therefore the next result will be useful.

\begin{lemma}\label{lem-k-open}
  Let $X$ by a $G$-space and let $k:X\to (G\backslash X)^{\sim}$ be
  the natural map.  Then $k$ is continuous and open (cf.,
  \cite{wil:crossed}*{Lemma~6.12}).
\end{lemma}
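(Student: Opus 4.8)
The plan is to realize $k$ as the composite $k=q\circ p$, where $p\colon X\to G\backslash X$ is the quotient orbit map and $q\colon G\backslash X\to (G\backslash X)^{\sim}$ is the $T_{0}$-ization map. Continuity of $k$ is then immediate: $p$ is continuous because it is an orbit map, and $q$ is continuous by the very definition of the quotient topology on the $T_{0}$-ization. Thus the real content of the lemma is that $k$ is open, and since a composite of open maps is open, it suffices to show that $p$ and $q$ are each open.

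For $p$, I would invoke the standard fact that the orbit map of a $G$-space is open. This is exactly where the hypotheses on $G$ enter: since $G$ carries a Haar system its range map is open, and so for open $U\subset X$ the saturation $p^{-1}(p(U))=G\cdot U$ is open, whence $p(U)$ is open in $G\backslash X$ (cf.\ \cite{wil:toolkit}).

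For $q$, the key point is to identify the equivalence relation used to form the $T_{0}$-ization. By definition $y\sim w$ iff $\overline{\{y\}}=\overline{\{w\}}$, and this is precisely the condition that $y$ and $w$ be topologically indistinguishable, i.e.\ that they lie in exactly the same open subsets of $G\backslash X$. It follows that every open set is saturated for $\sim$: if $W\subset G\backslash X$ is open and $y\in q^{-1}(q(W))$, then $\overline{\{y\}}=\overline{\{w\}}$ for some $w\in W$, and indistinguishability forces $y\in W$; the reverse inclusion is trivial, so $q^{-1}(q(W))=W$. By the description \eqref{eq:43a} of the open sets of the $T_{0}$-ization, $q(W)$ is open, and hence $q$ is open.

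Putting these together, $k=q\circ p$ is continuous and open. The one step requiring genuine (if brief) argument is the openness of $q$, and there the crucial realization is that the $T_{0}$-ization collapses exactly the topologically indistinguishable points, so that open sets are automatically $\sim$-saturated and $q$ is open with no further hypotheses; the openness of the orbit map $p$ is standard, and the overall structure mirrors \cite{wil:crossed}*{Lemma~6.12}.
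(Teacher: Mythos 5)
Your proof is correct and follows essentially the same route as the paper: both reduce openness of $k$ to the fact that the $k$-saturation of an open set is its (open) $G$-saturation. The only cosmetic difference is that the paper verifies $k^{-1}\bigl(k(U)\bigr)=G\cdot U$ in one stroke with a net argument ($\gamma_{i}\cdot x\to y\in U$ forces $x\in G\cdot U$), whereas you split off the $T_{0}$-ization step and handle it via the equivalent observation that points with equal closures are topologically indistinguishable, so open sets are automatically $\sim$-saturated.
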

\begin{proof}
  Since $k$ is the composition of continuous maps, it is clearly
  continuous.  To see that $k$ is open, it suffices to see that
  \begin{align}
    \label{eq:41}
    k^{-1}\bigl(k(U)\bigr)=G\cdot U
  \end{align}
  when $U$ is open in $X$.  Since $k(x)=k(\gamma\cdot x)$, we clearly
  have
  \begin{align}
    \label{eq:42}
    G\cdot U\subset k^{-1}\bigl(k(U)\bigr).
  \end{align}
  Suppose $k(x)\in k(U)$.  Then $k(x)=k(y)$ with $y\in U$.  Then
  $\overline{G\cdot x}=\overline{G\cdot y}$.  Therefore there are
  $(\gamma_{i})\subset G$ such that $\gamma_{i}\cdot x\to y$.  Since
  $U$ is open, we eventually have $\gamma_{i}\cdot x$ in $U$.  But
  then $x=\gamma_{i}^{-1}\cdot (\gamma_{i}\cdot x)$ is eventually in
  $G\cdot U$ and $x\in G\cdot U$.
\end{proof}

\section{$\primcsg$}
\label{sec:primcsg}

In this section we start our examination of $\primcsg$ and the spectrum
$\cs(G)^{\wedge}$ by defining a continuous map of $\stabg$ into
$\cs(G)^{\wedge}$ and examining its properties.

\begin{prop}\label{prop-main-cts}
  Suppose that $G$ is a second countable locally compact Hausdorff
  groupoid with abelian isotropy and with a Haar system.  If
  $(u,\pi)\in\stabg$, then $\irrind(u,\pi)=\indsg\bigl(G(u),\pi\bigr)$ is an
  irreducible represenaton of $\cs(G)$.  Then  $\irrind$ induces a
  continuous map of $\stabg$ into $\cs(G)^{\wedge}$ that is constant
  on $G$-orbits and factors through a continuous map $\irrindq$ of
  $\stabgg$ into $\cs(G)^{\wedge}$.  If
  $\psind(u,\pi)=\ker\bigl(\irrind(u,\pi)\bigr) \bigr)$, then $\psind$
  is continuous as a map of $\stabg$ into $\primcsg$ and factors
  through a continuous map $\psindq$ of $\stabggt$ into $\primcsg$.
  If $G$ is amenable, then $\psind$, and hence $\psindq$, is
  surjective.
\end{prop}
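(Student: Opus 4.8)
The plan is to treat the four assertions---irreducibility, continuity, $G$-invariance together with the resulting factorizations, and surjectivity---in that order, with continuity as the crux. For irreducibility, Lemma~\ref{lem-ind-equiv} identifies $\irrind(u,\pi)=\indsg\bigl(G(u),\pi\bigr)$ with $\indgh\pi$ for $H=G(u)$; since $\pi$ is a character, hence an irreducible representation of the abelian isotropy group $G(u)$, the representation $\indgh\pi$ is irreducible by \cite{ionwil:pams08}. For continuity I would first reduce the statement about $\cs(G)^{\wedge}$ to one about $\primcsg$: the Jacobson topology on $\cs(G)^{\wedge}$ is by definition the inverse image of the Jacobson topology on $\primcsg$ under $[\rho]\mapsto\ker\rho$, so $\irrind$ is continuous into $\cs(G)^{\wedge}$ precisely when $\psind=\ker\circ\irrind$ is continuous into $\primcsg$.

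To prove $\psind$ continuous I would exploit that $(\stabg,\ts)$ is second countable (Lemma~\ref{lem-ts-sec-count}), hence first countable, so that sequential continuity suffices; and since in any space $y_{n}\to y$ as soon as every subsequence has a sub-subsequence converging to $y$, it is enough to treat the subsequences furnished by Proposition~\ref{prop-conv-ts}. Thus suppose $(u_{n},\pi_{n})\to(u_{0},\pi_{0})$; after passing to such a subsequence and relabelling I may assume conditions (a)--(c) hold, so that $G(u_{n})\to H\subset G(u_{0})$ in $\so$ and $\bigl(G(u_{n}),\pi_{n}\bigr)\to(H,\pi_{0}\restr H)$ in $\Sigmah$. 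Since $\psind(u_{n},\pi_{n})=\ker\indsg\bigl(G(u_{n}),\pi_{n}\bigr)$, Lemma~\ref{lem-ind-cts} yields $\psind(u_{n},\pi_{n})\to\ker\indsg(H,\pi_{0}\restr H)$ in $\I(\cs(G))$.

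The main obstacle is that this limit ideal need not equal $\psind(u_{0},\pi_{0})$, because $H$ may be a proper subgroup of $G(u_{0})$. I would bridge the gap by induction in stages inside the isotropy: as $\pi_{0}$ is a character of $G(u_{0})$ extending $\pi_{0}\restr H$, it is weakly contained in $\Ind_{H}^{G(u_{0})}(\pi_{0}\restr H)$, and transitivity of Rieffel induction then gives $\Ind_{G(u_{0})}^{G}\pi_{0}\prec\indgh(\pi_{0}\restr H)$, that is (via Lemma~\ref{lem-ind-equiv}) $\ker\indsg(H,\pi_{0}\restr H)\subset\psind(u_{0},\pi_{0})$. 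Because a sequence converging to an ideal in $\I(\cs(G))$ also converges to every larger ideal---immediate from the definition of the basic open sets $\O_{J}$---it follows that $\psind(u_{n},\pi_{n})\to\psind(u_{0},\pi_{0})$ in $\I(\cs(G))$, and since $\psind(u_{0},\pi_{0})\in\primcsg$ with the subspace topology, this is convergence in $\primcsg$. This proves $\psind$, and hence $\irrind$, continuous.

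Finally, $G$-invariance follows from the equivalence $\indsg(\gamma\cdot H,\gamma\cdot\pi)\cong\indsg(H,\pi)$, implemented by the $\gamma$-translation unitary between the two subgroup modules, with the cocycle $\omega$ of Lemma~\ref{lem-3.25} absorbing the change of Haar measure; this is routine. Consequently $\irrind$ is constant on $G$-orbits and, by the universal property of the quotient topology, factors through a continuous $\irrindq$ on $\stabgg$, while $\psind$ likewise factors continuously through $\stabgg$; as $\primcsg$ is $T_{0}$, \cite{wil:crossed}*{Lemma~6.10} produces the further continuous factorization $\psindq$ on $\stabggt$. For surjectivity, amenability of $G$ gives EH-regularity \cite{ionwil:iumj09}, so every $P\in\primcsg$ equals $\ker\indgh\pi$ for some $u$ with $H=G(u)$ and $\pi\in\hat H$, which by Lemma~\ref{lem-ind-equiv} is $\psind(u,\pi)$; hence $\psind$, and therefore $\psindq$, is surjective.
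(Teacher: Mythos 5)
Your proposal is correct and follows essentially the same route as the paper: irreducibility via Lemma~\ref{lem-ind-equiv} and \cite{ionwil:pams08}, continuity via the subsequence criterion of Proposition~\ref{prop-conv-ts} together with Lemma~\ref{lem-ind-cts} and the weak containment $\pi_{0}\prec\Ind_{H}^{G(u_{0})}(\pi_{0}\restr H)$ coming from amenability of the abelian isotropy (the paper's appeal to Green), then induction in stages, the $T_{0}$-ization's universal property, and Effros--Hahn for surjectivity. The only differences are cosmetic: you close the convergence argument directly from the basic open sets $\O_{J}$ (a limit ideal's supersets are also limits) where the paper cites \cite{echeme:em11}*{Lemma~4.7(ii)}.
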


We believe that $\psindq$ is often a homeomorphism and one of our
immediate goals is to establish general conditions for which it is.
But in general, we do not even know if $\psindq$ is always injective.
However, in the GCR case, we can sharpen this result considerably.

\begin{cor}\label{cor-main-cts}
  Suppose that $G$ is as in the proposition, and that $\gugo$ is a
  $T_{0}$ topological space.  Then $\stabgg$ is $T_{0}$ and $\irrind$
  factors through a continuous bijection $\irrindq$ of $\stabgg$ onto
  $\cs(G)^{\wedge}$.
\end{cor}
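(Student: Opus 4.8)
The plan is to reduce the whole statement to two facts: that $\cs(G)$ is GCR under this hypothesis, and that the induction map realizes the Effros--Hahn parameterization of $\primcsg$. First I would note that $\gugo$ being $T_{0}$ is equivalent to every orbit $[u]$ being locally closed in $\go$; combined with the fact that the isotropy groups $G(u)$ are abelian (hence their group \cs-algebras are commutative, so type~I), this is exactly the situation in which $\cs(G)$ is GCR. I will take this as the key external input. Its two consequences that I need are that the canonical surjection $[\rho]\mapsto\ker\rho$ from $\cs(G)^{\wedge}$ onto $\primcsg$ is a \emph{bijection}, and that $\cs(G)^{\wedge}$ is therefore $T_{0}$ (being in bijection, indeed homeomorphic, with the always-$T_{0}$ space $\primcsg$).

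Granting this, the $T_{0}$ assertion for $\stabgg$ becomes a formal afterthought: once $\irrindq$ is shown to be injective it is a continuous injection of $\stabgg$ into the $T_{0}$ space $\cs(G)^{\wedge}$, and a continuous injection into a $T_{0}$ space has $T_{0}$ domain (if two distinct points had equal closures, so would their images, contradicting injectivity and $T_{0}$-ness of the target). So it remains only to prove that $\irrindq$ is a continuous bijection, and continuity is already Proposition~\ref{prop-main-cts}.

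For surjectivity I would invoke EH-regularity. Local closedness of the orbits gives EH-regularity by \cite{wil:toolkit}*{Theorem~5.35}, so every $P\in\primcsg$ is of the form $\ker\bigl(\indsg(G(u),\pi)\bigr)=\psind(u,\pi)$ for some $(u,\pi)\in\stabg$; that is, $\psind$ is onto $\primcsg$. Feeding this through the GCR bijection above, every irreducible representation of $\cs(G)$ is equivalent to some $\irrind(u,\pi)$, so $\irrindq$ maps $\stabgg$ onto $\cs(G)^{\wedge}$.

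The real work is injectivity. Suppose $\irrind(u,\pi)\cong\irrind(v,\sigma)$. By GCR the two representations have the same kernel $P$, and since $G$-invariant ideals of $\cs(G)$ correspond to invariant open subsets of $\go$, $P$ determines the smallest closed invariant set over which $\irrind(u,\pi)$ lives, namely $\overline{[u]}$; hence $\overline{[u]}=\overline{[v]}$, and local closedness forces $[u]=[v]$. Acting by $G$, I may assume $u=v$. It then remains to see that $\pi\neq\sigma$ in $G(u)^{\wedge}$ forces $\indsg(G(u),\pi)\not\cong\indsg(G(u),\sigma)$; here abelianness is decisive, since conjugation by $G(u)$ acts trivially on $G(u)^{\wedge}$, so the only point of the fibre over $u$ in the $G$-orbit of $(u,\pi)$ is $(u,\pi)$ itself. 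I expect this final step---that induction from the \emph{full} abelian isotropy group is faithful on each fibre, which I would prove by recovering the inducing character from the representation (for instance from the formula \eqref{eq:5}, or by restricting to the isotropy subgroupoid algebra)---to be the main obstacle, with the GCR input being the external ingredient that makes everything else routine.
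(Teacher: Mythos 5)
Your architecture is the same as the paper's: establish that $\cs(G)$ is GCR (so that $\cs(G)^{\wedge}$ is identified with the $T_{0}$ space $\primcsg$), get surjectivity from EH-regularity, deduce the $T_{0}$-ness of $\stabgg$ formally from injectivity of $\irrindq$, and reduce injectivity to first separating orbits and then separating characters over a fixed unit. All of this matches Lemma~\ref{lem-to-amen-gcr}, Proposition~\ref{prop-main-cts}, and Lemma~\ref{lem-basic-tools}(a) and (b), and your observation that a continuous injection into a $T_{0}$ space has $T_{0}$ domain is correct.

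The gap is exactly the step you flag as ``the main obstacle'' and then leave as an expectation: that $\irrind(u,\pi)\cong\irrind(u,\sigma)$ forces $\pi=\sigma$. Your remark that conjugation by the abelian group $G(u)$ acts trivially on $G(u)^{\wedge}$ only shows that the orbits $G\cdot(u,\pi)$ and $G\cdot(u,\sigma)$ are distinct when $\pi\neq\sigma$; it says nothing about the induced representations, which is what injectivity requires. Neither of your suggested routes is carried out, and the first (reading the character off \eqref{eq:5}) does not obviously work: a priori, inequivalent inducing characters could yield equivalent induced representations, and ruling this out is precisely an imprimitivity-type statement, not a computation with the inner product. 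The paper's proof of this step (Lemma~\ref{lem-basic-tools}(c)) uses three ingredients absent from your sketch: (i) $\Ind_{G(u)}^{G}\pi$ factors through the restriction map $\cs(G)\to\cs(G\restr{\overline{[u]}})$; (ii) since $\gugo$ is $T_{0}$, the Mackey--Glimm--Ramsay dichotomy makes $[u]$ open in $\overline{[u]}$, so $\cs(G\restr{[u]})$ is an ideal in $\cs(G\restr{\overline{[u]}})$ and the representation in question is the canonical extension of $\Ind_{G(u)}^{G\restr{[u]}}\pi$; and (iii) $G\restr{[u]}$ is equivalent as a groupoid to the group $G(u)$, so Rieffel induction across the equivalence bimodule is a bijection (indeed a homeomorphism) of $G(u)^{\wedge}$ onto $\cs(G\restr{[u]})^{\wedge}$. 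Item (iii) is where ``induction is faithful on the fibre'' actually comes from, and items (i)--(ii) are needed to transport your hypothesis from $G$ down to $G\restr{[u]}$, where the equivalence lives. Until this is supplied, the injectivity of $\irrindq$---and with it the $T_{0}$ claim for $\stabgg$---is not proved.
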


We need a number of preliminary results before proving the proposition
and its corollary.  Recall that our standing assumptions are that $G$
is second countable, has abelian isotropy, and has a Haar system.

\begin{lemma}[\cite{ionwil:pams08}]
  \label{lem-irr} If $(u,\pi)\in\stabg$, then
  $\indsg\bigl(G(u),\pi\bigr)$ is an irreducible representation of
  $\cs(G)$.
\end{lemma}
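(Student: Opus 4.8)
The plan is to reduce to a commutant computation and then exploit that we induce from the \emph{full} isotropy group. Set $H=G(u)$. By Lemma~\ref{lem-ind-equiv}, $\indsg(H,\pi)$ is equivalent to $\indgh\pi$, so it suffices to show that $\indgh\pi$ is irreducible, that is, $\indgh\pi(\cs(G))'=\C$. This is the main theorem of \cite{ionwil:pams08}, and I would follow that route; what I want to record here is why the argument succeeds and where its weight lies.

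Recall that $\indgh\pi$ is assembled from the right Hilbert $\cc(H)$-module $\X=\overline{\cc(G_{u})}$ by the Rieffel machine, with $\cs(G)$ acting by the adjointable operators of \eqref{eq:4}. A bounded operator $T$ commuting with $\indgh\pi(\cs(G))$ must respect the right $\cc(H)$-action on induced vectors, so it is built from intertwining data for $\pi$; but since $\X$ is not a full imprimitivity bimodule, this does not by itself pin $T$ down, and the real task is to exclude commuting operators beyond the scalars.

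The decisive structural input is that $H=G(u)$ is exactly the stabilizer of $u$ for the $G$-action on $\go$: if $\gamma\cdot u=u$ then $\gamma\in H$. Hence $G$ acts transitively on the orbit $[u]\cong G_{u}/H$ with stabilizer precisely $H$, so the Mackey double-coset phenomenon---which enlarges the commutant when one induces from a \emph{proper} subgroup of a stabilizer---does not occur. Together with the fact that $\pi$ is a character, so that $\pi(H)'=\C$, this forces $\indgh\pi(\cs(G))'=\C$ and hence irreducibility. I note that abelian-ness of the isotropy is inessential at this stage: every irreducible $\pi$ gives $\pi(H)'=\C$, and the abelian hypothesis is only used later, to parametrize all of $\primcsg$ by the characters gathered in $\stabg$.

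The genuine obstacle is that the clean heuristic ``$\H_{\Ind\pi}$ is $L^{2}$ of the orbit $[u]$, twisted by $\pi$'' is not literally available, because $[u]$ need not be locally closed and $G_{u}/H$ then carries no usable quotient measure over which to disintegrate. The rigorous argument of \cite{ionwil:pams08} therefore never leaves the Hilbert module: given a putative $T$ in the commutant, one tests it against vectors coming from $\phi\in\cc(G_{u})$ and uses an approximation with functions in $\cc(G)$ supported near the diagonal---together with the Haar system and the cocycle $\omega$ of Lemma~\ref{lem-3.25} to transport mass coherently along the orbit---to conclude that $T$ is scalar. This approximation, which substitutes for the missing quotient structure, is the crux; everything else is formal.
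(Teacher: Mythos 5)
Your proof is correct and takes essentially the same route as the paper: reduce via Lemma~\ref{lem-ind-equiv} to the classical induced representation $\Ind_{G(u)}^{G}\pi$ and then invoke the Ionescu--Williams irreducibility theorem (\cite{ionwil:pams08}*{Theorem~5}), which is exactly what the paper does. The additional commentary on the commutant computation and the role of inducing from the full isotropy group is accurate background on the cited theorem but is not needed for the lemma itself.
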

\begin{proof}
  If $(u,\pi)\in\stabg$, then $\Ind_{G(u)}^{G}\pi$ is irreducible by
  \cite{ionwil:pams08}*{Theorem~5}.  Hence
  $\indsg\bigl(G(u),\pi\bigr)$ is irreducible by 
  Lemma~\ref{lem-ind-equiv}.
\end{proof}

\begin{lemma}\label{lem-to-amen-gcr}
  If $\gugo$ is $T_{0}$, then $G$ is amenable and $\cs(G)$ is GCR.
\end{lemma}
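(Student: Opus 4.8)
The plan is to establish the two conclusions separately, treating amenability as the substantive step and deducing the GCR property from a known characterization once amenability is in hand. Throughout I would use the standard fact (the Glimm--Effros dichotomy for second countable locally compact groupoids, see \cite{wil:toolkit}) that $\gugo$ being $T_{0}$ is equivalent to every orbit $[u]=G\cdot u$ being locally closed in $\go$, and that this in turn forces $\gugo$ to be \emph{almost Hausdorff}: every nonempty closed subset of $\gugo$ contains a nonempty relatively open Hausdorff subset.

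First I would prove amenability. The isotropy subgroupoid $G'$ is a closed normal subgroupoid of $G$, and since each fibre $G(u)$ is abelian, $G'$ is an abelian group bundle and hence amenable. Quotienting gives the principal groupoid $\rg:=G/G'$, whose orbit space is again $\gugo$ and hence $T_{0}$. To see that $\rg$ is amenable I would use the almost Hausdorff property to build a transfinite increasing family $(W_{\alpha})$ of open invariant subsets of $\go$, of countable length, exhausting $\go$, such that each successive difference $W_{\alpha+1}\setminus W_{\alpha}$ has Hausdorff orbit space. On each such stratum the reduction of $\rg$ is a principal groupoid with Hausdorff orbit space; since the orbit equivalence relation is then the preimage of the (closed) diagonal under the orbit map, it is closed in the product, so the reduced groupoid is proper and hence amenable. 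Permanence of amenability under reductions to open invariant sets and their closed complements (Anantharaman-Delaroche--Renault) then propagates amenability up the series to all of $\rg$. Finally, amenability is preserved under extensions, so the extension $G'\to G\to\rg$ with both $G'$ and $\rg$ amenable yields that $G$ is amenable.

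With amenability established we have $\cs(G)=C^{*}_{r}(G)$, so the GCR conclusion follows from the known characterization of GCR groupoid \cs-algebras (Clark): for a second countable amenable groupoid, $\cs(G)$ is GCR if and only if the orbit space $\gugo$ is $T_{0}$ and every isotropy group $G(u)$ has GCR group \cs-algebra. The first condition holds by hypothesis, and the second holds because each $G(u)$ is abelian (indeed $\cs$ of a locally compact abelian group is commutative, hence even CCR). Therefore $\cs(G)$ is GCR.

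The main obstacle is the amenability argument, and within it the delicate point is realizing the principal quotient on each stratum as a genuine second countable locally compact Hausdorff groupoid with Haar system, so that ``principal with Hausdorff orbit space $\Rightarrow$ proper $\Rightarrow$ amenable'' is legitimately applicable. In particular one must reconcile the quotient topology on $\rg$ coming from $G$ with the subspace topology it inherits as the orbit equivalence relation inside $\go\times\go$; on the Hausdorff-orbit-space strata these agree and the relation is closed, which is exactly what the properness step needs. The remaining ingredients---the Glimm--Effros dichotomy, properness of principal groupoids with Hausdorff orbit space, and the permanence of amenability under extensions and open/closed decompositions---are all standard and may be cited.
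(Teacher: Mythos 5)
Your GCR half is essentially the paper's proof: both reduce to Clark's characterization \cite{cla:iumj07}*{Theorem~1.4} together with the fact that abelian groups are GCR, and that part is fine (the detour through $\cs(G)=C^{*}_{r}(G)$ is not needed for it). The paper disposes of amenability just as quickly, by citing \cite{wil:toolkit}*{Theorem~9.86}: a second countable groupoid with a Haar system, amenable isotropy groups, and $T_{0}$ orbit space is amenable. The proof of that theorem does \emph{not} pass to the principal quotient: it uses the Mackey--Glimm--Ramsay dichotomy to reduce to a locally closed orbit, notes that $G\restr{[u]}$ is equivalent to the amenable group $G(u)$, and then uses invariance of amenability under equivalence together with the permanence properties for open invariant sets and their closed complements.

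Your amenability argument has a genuine gap at its central step. First, $G/G'$ need not be a locally compact groupoid when the isotropy map $u\mapsto G(u)$ is discontinuous---the paper flags exactly this in the remark following Lemma~\ref{lem-gen-proper}---so the extension-permanence result you invoke for $G'\to G\to \rg$ is not available in the required generality. Second, and more seriously, the assertion that on a Hausdorff-orbit-space stratum the quotient topology on $\rg$ agrees with the relative product topology, so that ``closed relation'' upgrades to ``proper,'' is false. A free action with Hausdorff orbit space need not be proper: for instance, let $X=\bigl(\sset{0}\cup\set{1/k:k\ge 1}\bigr)\times\Z$ be topologized so that each line $\sset{1/k}\times\Z$ is open and discrete and the basic neighborhoods of $(0,j)$ are $\sset{(0,j)}\cup\set{(1/k,j),(1/k,k+j):k\ge N}$; then $T(y,n)=(y,n+1)$ generates a free $\Z$-action on a second countable locally compact Hausdorff space whose orbit space is homeomorphic to $\sset 0\cup\set{1/k}$ (compact Hausdorff), yet $T^{k}(1/k,0)=(1/k,k)$ shows the action is not proper. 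Here the principal groupoid $\Z\ltimes X$ maps onto the closed relation $R\subset X\times X$ by a continuous bijection that is not a homeomorphism, which is precisely the identification of topologies your argument needs and does not have. The repair is to drop the quotient and properness entirely: on each stratum every orbit is closed and $G\restr{[u]}$ is equivalent to the abelian, hence amenable, group $G(u)$, after which your permanence argument goes through---but at that point you have reconstructed the proof of the cited theorem, so you may as well cite it as the paper does.
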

\begin{proof}
  If $\gugo$ is $T_{0}$, then since the isotropy groups are abelian,
  and hence GCR, $\cs(G)$ is GCR by \cite{cla:iumj07}*{Theorem~1.4}.

  On the other hand, if $\gugo$ is $T_{0}$, then since all the
  isotropy groups are amenable, $G$ is amenable by
  \cite{wil:toolkit}*{Theorem~9.86}.
\end{proof}

\begin{lemma}
  \label{lem-pre-cont} Let $\bigl((u_{n},\pi_{n})\bigr)$ be a sequence
  in $\stabg$ such that $\bigl(G(u_{n}),\pi_{n}\bigr)\to (H,\sigma)$
  in $\Sigmah$.  If $\gamma\in G(u)^{\wedge}$ is such that
  $\gamma\restr H=\sigma$, then
  $\irrind(u_{n},\pi_{n})\to \irrind(u,\gamma)$ in $\cs(G)^{\wedge}$.
\end{lemma}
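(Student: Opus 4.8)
The plan is to reduce everything to the continuity of induced kernels in Lemma~\ref{lem-ind-cts} together with one elementary kernel containment. Write $u=p_{0}(H)$, so that $H\subset G(u)$ and $(u,\gamma)\in\stabg$. By Lemma~\ref{lem-irr} each $\irrind(u_{n},\pi_{n})=\indsg\bigl(G(u_{n}),\pi_{n}\bigr)$ and $\irrind(u,\gamma)=\indsg\bigl(G(u),\gamma\bigr)$ is irreducible, so all the kernels in sight are primitive. Since the topology on $\cs(G)^{\wedge}$ is by definition the initial topology pulled back from $\primcsg$ along $\rho\mapsto\ker\rho$, it suffices to prove that $\ker\bigl(\irrind(u_{n},\pi_{n})\bigr)\to\ker\bigl(\irrind(u,\gamma)\bigr)$ in $\primcsg$.

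First I would feed the hypothesis $\bigl(G(u_{n}),\pi_{n}\bigr)\to(H,\sigma)$ in $\Sigmah$ into Lemma~\ref{lem-ind-cts} to obtain
\begin{equation*}
  \ker\bigl(\indsg(G(u_{n}),\pi_{n})\bigr)\longrightarrow
  \ker\bigl(\indsg(H,\sigma)\bigr)\quad\text{in }\I(\cs(G)).
\end{equation*}
Note that the limit here is the kernel of the representation induced from the \emph{possibly proper} subgroup $H$, not the one we are after.

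The key step---and the one I expect to be the main obstacle---is to show that
\begin{equation*}
  \ker\bigl(\indsg(H,\sigma)\bigr)\subset\ker\bigl(\indsg(G(u),\gamma)\bigr),
\end{equation*}
that is, that $\indsg(G(u),\gamma)$ is weakly contained in $\indsg(H,\sigma)$. Using Lemma~\ref{lem-ind-equiv} to replace $\indsg(H,\sigma)$ and $\indsg(G(u),\gamma)$ by $\indgh\sigma$ and $\Ind_{G(u)}^{G}\gamma$, I would argue by induction in stages, $\indgh\sigma=\Ind_{G(u)}^{G}\bigl(\Ind_{H}^{G(u)}\sigma\bigr)$. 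Since $G(u)$ is abelian and $\gamma\restr H=\sigma$, the character $\gamma$ lies in the support of $\Ind_{H}^{G(u)}\sigma$---whose spectral decomposition runs over exactly the extensions of $\sigma$ to $G(u)^{\wedge}$, a coset of $H^{\perp}$---so $\gamma$ is weakly contained in $\Ind_{H}^{G(u)}\sigma$. Because Rieffel induction preserves weak containment, applying $\Ind_{G(u)}^{G}$ yields the desired inclusion of kernels. This is the one place where abelianness of the isotropy is genuinely used.

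Finally I would invoke the elementary observation that in $\I(\cs(G))$, if $I_{n}\to I$ and $I\subset J$, then also $I_{n}\to J$: for any subbasic neighborhood $\O_{K}$ of $J$ we have $K\not\subset J$, hence (since $I\subset J$) $K\not\subset I$, so $I\in\O_{K}$ and thus eventually $I_{n}\in\O_{K}$. Taking $I=\ker\bigl(\indsg(H,\sigma)\bigr)$ and $J=\ker\bigl(\irrind(u,\gamma)\bigr)$ gives $\ker\bigl(\irrind(u_{n},\pi_{n})\bigr)\to\ker\bigl(\irrind(u,\gamma)\bigr)$ in $\I(\cs(G))$. As all these ideals are primitive and $\primcsg$ carries the relative topology from $\I(\cs(G))$, this is convergence in $\primcsg$, and the reduction in the first paragraph then gives $\irrind(u_{n},\pi_{n})\to\irrind(u,\gamma)$ in $\cs(G)^{\wedge}$.
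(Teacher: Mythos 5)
Your proposal is correct and follows essentially the same route as the paper: continuity of induction (Lemma~\ref{lem-ind-cts}) gives convergence to $\ker\bigl(\indsg(H,\sigma)\bigr)$, induction in stages plus the weak containment of $\gamma$ in $\Ind_{H}^{G(u)}\sigma$ gives the kernel inclusion, and the elementary fact that $I_{n}\to I\subset J$ implies $I_{n}\to J$ finishes the argument. The only cosmetic differences are that the paper justifies the weak containment $\ker\bigl(\Ind_{H}^{G(u)}\sigma\bigr)\subset\ker\gamma$ by citing Greenleaf's theorem for amenable groups where you give the direct abelian-duality argument (the spectral support of $\Ind_{H}^{G(u)}\sigma$ is the coset $\gamma H^{\perp}$), and the paper outsources your final paragraph to a citation of Echterhoff--Emerson.
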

\begin{proof}
  Since induction is continuous by Lemma~\ref{lem-ind-cts},
  \begin{equation}
    \label{eq:17}
    \ker\bigl(\indsg(G(u_{n}),\pi_{n})\bigr) \to
    \ker\bigl(\indsg(H,\sigma)\bigr)
  \end{equation}
  in $\I(\cs(G))$.

  Since $G(u)$ is abelian, and hence amenable, we can apply
  \cite{gre:jfa69}*{Theorem~5.1} to conclude that as representations
  of the group $G(u)$,
  \begin{equation}
    \label{eq:18}
    \ker\bigl(\Ind_{H}^{G(u)} \sigma\bigr) = \ker\bigl(\Ind_{H}^{G(u)}
    \gamma\restr H\bigr) \subset \ker \gamma.
  \end{equation}
  Therefore we can use induction in stages (cf.,
  \cite{ionwil:pams08}*{Theorem~4}) as well as
  Lemma~\ref{lem-ind-equiv} to conclude that
  \begin{align}
    \label{eq:19}
    \ker\bigl(\indsg(H,\sigma)\bigr)
    &= \ker\bigl(\Ind_{H}^{G}\sigma\bigr)
      =\ker\bigl(\Ind_{G(u)}^{G}\bigl(\Ind_{H}^{G(u)} \sigma
      \bigr)\bigr) \\
    &\subset \ker\bigl(\Ind_{G(u)}^{G}(\gamma)\bigr) = \ker
      \bigl(\indsg(G(u),\gamma)\bigr) =\ker\bigl(\irrind(u,\gamma)\bigr)
      . 
  \end{align}
  Thus, in classical terms, $\irrind(u,\gamma)$ is weakly contained in
  $\indsg(H,\sigma)$.  Now the result follows by untangling
  definitions as in \cite{echeme:em11}*{Lemma~4.7(ii)}.
\end{proof}

\begin{cor}\label{cor-k-cts}
  The map $\psind:\stabg\to \primcsg$ is continuous.
\end{cor}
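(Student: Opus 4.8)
The plan is to exploit second countability to reduce continuity to a statement about sequences, and then feed the sequential characterization of $\ts$ from Proposition~\ref{prop-conv-ts} into the weak-containment estimate of Lemma~\ref{lem-pre-cont}.

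First I would observe that, by Lemma~\ref{lem-ts-sec-count}, $(\stabg,\ts)$ is second countable, hence first countable, so $\psind$ is continuous as soon as it is sequentially continuous. Thus it suffices to fix a convergent sequence $(u_{n},\pi_{n})\to(u_{0},\pi_{0})$ in $(\stabg,\ts)$ and to show $\psind(u_{n},\pi_{n})\to\psind(u_{0},\pi_{0})$ in $\primcsg$. Since in an arbitrary topological space a sequence converges to $x$ precisely when every subsequence has a further subsequence converging to $x$, I would pass to an arbitrary subsequence and aim to extract a sub-subsequence along which $\psind$ converges to the desired limit.

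Next, to such a subsequence I would apply Proposition~\ref{prop-conv-ts}: it furnishes a further subsequence---relabel it $(u_{m},\pi_{m})$---and an $H\in\so$ with $H\subset G(u_{0})$ such that $u_{m}\to u_{0}$, $G(u_{m})\to H$ in $\so$, and, crucially, $\bigl(G(u_{m}),\pi_{m}\bigr)\to(H,\pi_{0}\restr H)$ in $\Sigmah$. This is exactly the hypothesis of Lemma~\ref{lem-pre-cont}, taken with $\sigma=\pi_{0}\restr H$ and with the character $\gamma=\pi_{0}\in G(u_{0})^{\wedge}$, which visibly satisfies $\gamma\restr H=\pi_{0}\restr H=\sigma$. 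The lemma then yields $\irrind(u_{m},\pi_{m})\to\irrind(u_{0},\pi_{0})$ in $\cs(G)^{\wedge}$. Composing with the canonical continuous map $\cs(G)^{\wedge}\to\primcsg$, $\rho\mapsto\ker\rho$ (continuity here is built into the definition of the Jacobson topology on the spectrum), gives $\psind(u_{m},\pi_{m})\to\psind(u_{0},\pi_{0})$ in $\primcsg$. As the original subsequence was arbitrary, the double-subsequence criterion completes the argument.

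The step that requires care---and the reason the proof is not merely a one-line invocation of Lemma~\ref{lem-pre-cont}---is that convergence in $(\stabg,\ts)$ does \emph{not} directly produce convergence of $\bigl(G(u_{n}),\pi_{n}\bigr)$ in $\Sigmah$: Proposition~\ref{prop-conv-ts} guarantees this only subsequentially, and the limiting subgroup $H$ may change from subsequence to subsequence, as noted in the remark following that proposition. The double-subsequence device is precisely what absorbs this dependence, and the fact that the restriction $\pi_{0}\restr H$ together with the \emph{fixed} extension $\gamma=\pi_{0}$ works uniformly for every admissible $H$ is what allows the conclusion to be assembled independently of which $H$ the proposition produces.
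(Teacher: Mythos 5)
Your proof is correct and follows essentially the same route as the paper: both arguments reduce to sequences (legitimate since $\ts$ is second countable), invoke Proposition~\ref{prop-conv-ts} to extract a subsequence with $\bigl(G(u_{n}),\pi_{n}\bigr)\to(H,\pi_{0}\restr H)$ in $\Sigmah$, and then apply Lemma~\ref{lem-pre-cont} with $\gamma=\pi_{0}$. The only cosmetic difference is that the paper runs the subsequence extraction inside a proof by contradiction against a basic open set $\O_{J}$, whereas you package the same logic as the double-subsequence convergence criterion.
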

\begin{proof}
  Suppose that $(u_{n},\pi_{n})\to (u_{0},\pi_{0})$ in $\stabg$.  We
  need to verify that $\psind(u_{n},\pi_{n})\to \psind(u_{0},\pi_{0})$
  in $\primcsg$.  If this fails, then after passing to a subsequence
  and relabeling, there is a $J\in\I(\cs(G))$ such that
  $\psind(u_{0},\pi_{0})\in \O_{J}$, but
  $\psind(u_{n},\pi_{n}) \notin \O_{J}$ for all $n$.

  But then we can pass to another subsequence, relabel, and assume
  that $u_{n}\to u_{0}$, $G(u_{n})\to H \subset G(u_{0})$, and that
  $(G(u_{n}),\pi_{n}) \to (H,\pi_{0}\restr H)$.  By
  Lemma~\ref{lem-pre-cont}, this implies
  $\psind(u_{n},\pi_{n})=\ker\bigl( \irrind(u_{n},\pi_{n})\bigr)\to
  \ker\bigl( \irrind(u_{0},\pi_{0}) \bigr)=\psind(u_{0},\pi_{0})\in
  \O_{J}$.  This leads to a contradiction and completes the proof.
\end{proof}

If $u\in \go$, we let $[u]$ be the orbit $G\cdot u$ in $\go$.

\begin{lemma}
  \label{lem-basic-tools} Let $G$ and $\Sigma$ be as in
  Proposition~\ref{prop-main-cts}.
  \begin{enumerate}
  \item If $\gamma\in G$ and $(H,\sigma)\in\Sigmah$, then
    $\indsg(\gamma\cdot (H,\sigma))$ is equivalent to
    $\indsg(H,\sigma)$.
  \item If $\irrind(u,\pi)$ is equivalent to $\irrind(v,\sigma)$, then
    $[u]=[v]$.
  \item If $\gugo$ is $T_{0}$ and if $\irrind(u,\pi)$ is equivalent to
    $\irrind(u,\sigma)$, then $\pi=\sigma$.
  \end{enumerate}
\end{lemma}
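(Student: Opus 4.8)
The plan for \partref{1} is to exhibit an explicit intertwining unitary. By Lemma~\ref{lem-ind-equiv} it suffices to show that $\Ind_{H}^{G}\sigma$ and $\Ind_{\gamma\cdot H}^{G}(\gamma\cdot\sigma)$ are equivalent. Write $u=p_{0}(H)=s(\gamma)$ and $v=p_{0}(\gamma\cdot H)=r(\gamma)$; then $\eta\mapsto\eta\gamma$ is a bijection of $G_{v}$ onto $G_{u}$, and I would define $U\colon\cc(G_{u})\to\cc(G_{v})$ by $(U\phi)(\eta)=\omega(\gamma,H)^{1/2}\phi(\eta\gamma)$. Because the left action \eqref{eq:4} is left convolution and $U$ is right translation by the fixed element $\gamma$, the two operations commute, so $U$ intertwines the $\cs(G)$-actions at once. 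To check that $U$ is isometric for the inner product \eqref{eq:5}, substitute $t=\gamma s\gamma^{-1}$ (with $s\in H$) in the $\beta^{\gamma\cdot H}$-integral: Lemma~\ref{lem-3.25} replaces $d\beta^{\gamma\cdot H}(t)$ by $\omega(\gamma,H)^{-1}\,d\beta^{H}(s)$ and turns $(\gamma\cdot\sigma)(t)$ into $\sigma(s)$, while the substitution $\zeta=\eta\gamma$ carries $\lambda_{v}$ to $\lambda_{u}$ by invariance of the Haar system under the right translation $\eta\mapsto\eta\gamma$. The net effect is $\omega(\gamma,H)^{-1}$ times the inner product of $\phi$ and $\psi$, which the factor $\omega(\gamma,H)^{1/2}$ in $U$ exactly cancels. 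Since $U$ is also bijective, it extends to the required unitary equivalence.

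For \partref{2} I would restrict the representations to $C_{0}(\go)$, viewed inside $M(\cs(G))$ and acting on $\cc(G_{u})$ by $\varphi\cdot\phi(\eta)=\varphi(r(\eta))\phi(\eta)$. This is multiplication by $\varphi\circ r$, so the spectral measure of $C_{0}(\go)$ in $\irrind(u,\pi)$ is carried by $r(G_{u})=[u]$; that is, it assigns zero mass to $\go\setminus[u]$. A unitary equivalence of $\irrind(u,\pi)$ and $\irrind(v,\sigma)$ restricts to a unitary equivalence of the associated $C_{0}(\go)$-representations, so the two spectral measures lie in the same measure class. Since that class is concentrated on $[u]$ on one side and on $[v]$ on the other, and distinct orbits are disjoint, a nonzero measure cannot be concentrated on two disjoint sets; hence $[u]=[v]$.

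Part \partref{3} is the heart of the lemma. When $\gugo$ is $T_{0}$ the orbit $[u]$ is locally closed, hence open in its closure $\overline{[u]}$, and the reduction $G\restr{[u]}$ is a transitive groupoid with isotropy group $G(u)$. My plan is two-fold. First, using \partref{2} together with local closedness, I would argue that $\irrind(u,\pi)=\Ind_{G(u)}^{G}\pi$ factors through the quotient $\cs(G\restr{\overline{[u]}})$ of $\cs(G)$ and, being concentrated on the open invariant set $[u]$, restricts nondegenerately to the ideal $\cs(G\restr{[u]})$, where---by induction in stages (cf.\ \cite{ionwil:pams08}*{Theorem~4})---it is precisely the transitively induced representation $\Ind_{G(u)}^{G\restr{[u]}}\pi$. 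Second, I would invoke the imprimitivity theorem for transitive groupoids \cite{wil:toolkit}, which makes $\cs(G\restr{[u]})$ Morita equivalent to $\cs(G(u))\cong C_{0}(G(u)^{\wedge})$ with $\Ind_{G(u)}^{G\restr{[u]}}$ implementing the resulting homeomorphism of spectra. As $G(u)$ is abelian, its irreducible representations are exactly the characters in $G(u)^{\wedge}$, so this correspondence is injective and the equivalence of $\Ind_{G(u)}^{G}\pi$ and $\Ind_{G(u)}^{G}\sigma$ forces $\pi=\sigma$.

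The main obstacle is the first step of \partref{3}: making precise that the globally induced representation, once \partref{2} shows it lives on the locally closed orbit, agrees with the representation induced inside the transitive reduction $G\restr{[u]}$. The computation of \partref{1} and the soft measure-theoretic argument of \partref{2} are comparatively routine.
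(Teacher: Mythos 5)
Your proposal is correct and follows essentially the same route as the paper: for \partref{1} and \partref{2} the paper simply cites \cite{wil:toolkit}*{Lemma~5.48} and \cite{wil:toolkit}*{Proposition~5.50}, and your explicit intertwining unitary $U\phi(\eta)=\omega(\gamma,H)^{1/2}\phi(\eta\gamma)$ and the spectral-measure-on-the-orbit argument are exactly the standard proofs behind those citations. For \partref{3} your plan---pass to $\cs(G\restr{\overline{[u]}})$, use that $[u]$ is open in its closure to restrict to the ideal $\cs(G\restr{[u]})$, and then invoke the equivalence of $G\restr{[u]}$ with $G(u)$ so that induction is injective on spectra---is precisely the paper's argument (which cites \cite{wil:toolkit}*{Corollary~5.29}, \cite{wil:toolkit}*{Lemma~5.31}, and \cite{rw:morita}*{Corollary~3.33} for the three steps), so the ``main obstacle'' you flag is handled exactly as you propose.
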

\begin{proof}
  (a) If $H=G(u)$, then this is exactly
  \cite{wil:toolkit}*{Lemma~5.48}.  The general case is proved
  similarly.

  (b) This is \cite{wil:toolkit}*{Proposition~5.50}.

  (c) Let $j:\cs(G)\to \cs(G\restr{\overline{[u]}})$ be the usual map
  \cite{wil:toolkit}*{Theorem~5.1}.  Then $\irrind(u,\pi)$ is
  equivalent to $\Ind_{G(u)}^{G\restr{\overline{[u]}}}\pi\circ j$ by
  \cite{wil:toolkit}*{Corollary~5.29}.  Hence
  $\Ind_{G(u)}^{G\restr{\overline{[u]}}}\pi$ and
  $\Ind_{G(u)}^{G\restr{\overline{[u]}}}\sigma$ are equivalent.  Since
  $\gugo$ is $T_{0}$, $[u]$ is open in $\overline{[u]}$ by the
  Mackey--Glimm--Ramsay Dichcotomy \cite{wil:toolkit}*{Theorem~2.27}.
  Therefore $\Ind_{G(u)}^{G\restr{\overline{[u]}}}\pi$ is the
  canonical extension of $\Ind_{G(u)}^{G\restr{[u]}}\pi$ to
  $\cs(G\restr{\overline{[u]}})$ by \cite{wil:toolkit}*{Lemma~5.31}.
  Therefore $\Ind_{G(u)}^{G\restr{[u]}}\pi$ and
  $\Ind_{G(u)}^{G\restr{[u]}}\sigma$ are equivalent.  Since
  $G\restr{[u]}$ and $G(u)$ are equivalent groupoids,
  $\pi\mapsto \Ind_{G(u)}^{G\restr{[u]}}\pi$ induces a homeomorphism
  of $\cs(G(u))^{\wedge}$ onto $\cs(G\restr{[u]})^{\wedge}$ by
  \cite{rw:morita}*{Corollary~3.33}. The result follows.
\end{proof}


\begin{proof}[Proof of Proposition~\ref{prop-main-cts}]
  We saw that $\indsg\bigl(G(u),\pi\bigr)$ is irreducible in
  Lemma~\ref{lem-irr}.  Since the topology on $\cs(G)^{\wedge}$ is
  pulled back from $\primcsg$, the continuity of $\irrind$ follows
  from Corollary~\ref{cor-k-cts}.  Then $\irrind$ is constant on
  $G$-orbits by Lemma~\ref{lem-basic-tools}(a) and $\irrindq$ is
  well-defined and necessarily continuous (since $\stabgg$ has the
  quotient topology).

  The continuity of $\psind$ is immediate as is the observation that
  it factors through $\stabgg$.  Since $\primcsg$ is $T_{0}$, then
  $\psind$ factors through a continuous map $\psindq$ by the univeral
  property of the $T_{0}$-ization \cite{wil:crossed}*{Lemma~6.10}.

  If $G$ is amenable, then the surjectivity of $\psind$ follows from
  the Effros-Hahn Conjecture for groupoid \cs-algebras
  \cite{ionwil:iumj09}*{Theorem~2.1}.
\end{proof}

\begin{proof}[Proof of Corollary~\ref{cor-main-cts}]
  Since $\gugo$ is $T_{0}$, we have $G$ amenable and $\cs(G)$ GCR by
  Lemma~\ref{lem-to-amen-gcr}.  
  Since $\cs(G)$ is GCR, we can identify $\cs(G)^{\wedge}$ and
  $\primcsg$ by \cite{ped:cs-algebras}*{Theorem~6.1.5}.  Since $G$ is
  also amenable, $\irrindq$ is continuous and surjective by the last
  part of Proposition~\ref{prop-main-cts}.

  It will suffice to see that $\irrindq$ is injective since
  $\cs(G)^{\wedge}$ is $T_{0}$ if $\cs(G)$ is GCR.  But if
  $\irrind(u,\pi)$ is equivalent to $\irrind(v,\sigma)$, we must have
  $[u]=[v]$ by Lemma~\ref{lem-basic-tools}(b).  Then we can assume
  $v=\gamma\cdot u$.  Then $\irrind(u,\pi)$ and
  $\irrind(u,\gamma^{-1}\sigma)$ are equivalent by
  Lemma~\ref{lem-basic-tools}(a).  Since $\gugo$ is $T_{0}$,
  $\gamma^{-1}\cdot \sigma=\pi$ by Lemma~\ref{lem-basic-tools}(c).
  But then $G\cdot (u,\pi)=G\cdot (v,\sigma)$ and $\irrindq$ is
  injective as required.
\end{proof}

\section{Renault's Simplicity Result}
\label{sec:rena-simpl-result}

As something of an aside, we want to see that
Proposition~\ref{prop-main-cts} already implies a nice simplicity
result in our setting for amenable $G$.  It is a
special case of Renault's fundamental
\cite{ren:jot91}*{Corollary~4.9}.

First we need to recall some terminology.  In
\cite{ren:jot91}*{Definition~4.1}, Renault says that the isotropy of
$G$ is \emph{discretely trivial} at $v\in\go$ if for each compact set
$K$ in $G$ there is a neighborhood $V$ of $v$ in $\go$ such that
$u\in V$ implies $G(u)\cap K\subset\sset u$.  Of course this also
implies $G(v)=\sset v$.  This notion is of most import when $G$ is
\'etale in which case this isotropy is discretely trivial at $v$ if
and only if $G(v)=\sset v$.

\begin{prop}
  \label{prop-simple} Suppose that $G$ is an \emph{amenable} second
  countable locally compact Hausdorff groupoid with abelian isotropy
  and a Haar system. Suppose that the action of $G$ on $\go$ is
  minimal; that is, we assume $[u]$ is dense in $\go$ for all
  $u\in\go$.  If in addition there is a $u_{0}\in\go$ such that
  the isotropy is discretely trivial at $u_{0}$, then $\cs(G)$ is
  simple.
\end{prop}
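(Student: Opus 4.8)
The plan is to show that $\primcsg$ is a single point, which is equivalent to the simplicity of $\cs(G)$ (a \cs-algebra is simple exactly when its primitive ideal space is a singleton, necessarily $\set{\sset0}$). Since $G$ is amenable, Proposition~\ref{prop-main-cts} tells us that $\psind\colon\stabg\to\primcsg$ is continuous and surjective, and by Lemma~\ref{lem-basic-tools}(a) it is constant on $G$-orbits. Thus it suffices to prove that $\psind(u,\pi)=\sset0$ for every $(u,\pi)\in\stabg$. Write $\pi_{0}$ for the unique (trivial) character of the trivial group $G(u_{0})=\sset{u_{0}}$; note that discrete triviality at $u_{0}$ forces $G(u_{0})=\sset{u_{0}}$, so that $(u_{0},\pi_{0})\in\stabg$.

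The first step is a convergence claim: for every $(u,\pi)\in\stabg$ there are elements of the orbit $G\cdot(u,\pi)$ converging to $(u_{0},\pi_{0})$. Since the action is minimal, $[u]=r(G_{u})$ is dense in $\go$, so using second countability I may choose $\gamma_{i}\in G_{u}$ with $r(\gamma_{i})\to u_{0}$, and set $(u_{i},\pi_{i})=\gamma_{i}\cdot(u,\pi)=(r(\gamma_{i}),\gamma_{i}\cdot\pi)$. I will verify the criterion of Proposition~\ref{prop-conv-ts}. Part~(a) holds since $u_{i}=r(\gamma_{i})\to u_{0}$; since $p_{0}$ is proper and $p_{0}(G(u_{i}))=u_{i}\to u_{0}$, after passing to a subsequence $G(u_{i})\to H$ with $H\subset G(u_{0})=\sset{u_{0}}$, forcing $H=\sset{u_{0}}$ and giving~(b). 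For~(c) I must check $\bigl(G(u_{i}),\pi_{i}\bigr)\to(\sset{u_{0}},\pi_{0})$ in $\Sigmah$; by Proposition~\ref{prop-mrw3.3} this reduces to showing that whenever $a_{i}\in G(u_{i})$ with $a_{i}\to a_{0}$, then $\pi_{i}(a_{i})\to1$. This is exactly where discrete triviality enters: given a compact neighborhood $K$ of $u_{0}$, there is a neighborhood $V$ of $u_{0}$ with $u\in V\Rightarrow G(u)\cap K\subset\sset u$; for large $i$ we have $u_{i}\in V$ and $a_{i}\in K$, whence $a_{i}\in G(u_{i})\cap K\subset\sset{u_{i}}$, so $a_{i}=u_{i}$ and $\pi_{i}(a_{i})=1$. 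As this argument applies to every subsequence, Proposition~\ref{prop-conv-ts} gives $(u_{i},\pi_{i})\to(u_{0},\pi_{0})$.

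By continuity of $\psind$ and its constancy on orbits, the constant sequence $\psind(u,\pi)=\psind(u_{i},\pi_{i})$ converges to $\psind(u_{0},\pi_{0})$ in $\primcsg$; in the Jacobson topology this means $\psind(u_{0},\pi_{0})\in\overline{\set{\psind(u,\pi)}}$, i.e.\ $\psind(u,\pi)\subset\psind(u_{0},\pi_{0})$. Hence $\psind(u_{0},\pi_{0})$ contains every primitive ideal. It therefore remains to prove $\psind(u_{0},\pi_{0})=\sset0$, for then $\psind(u,\pi)\subset\sset0$ forces $\psind(u,\pi)=\sset0$ for every $(u,\pi)$, and the surjectivity of $\psind$ completes the proof.

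This last step is the main obstacle. By Lemma~\ref{lem-ind-equiv}, $\irrind(u_{0},\pi_{0})$ is equivalent to $\Ind_{G(u_{0})}^{G}\pi_{0}=\Ind_{\sset{u_{0}}}^{G}1$, the regular representation $\Ind u_{0}$ on $L^{2}(G_{u_{0}},\lambda_{u_{0}})$, so I must show $\Ind u_{0}$ is faithful. Here amenability does the work: because $G$ is amenable, $\cs(G)=C^{*}_{r}(G)$ and $\bigoplus_{v\in\go}\Ind v$ is faithful, so $\bigcap_{v\in\go}\ker(\Ind v)=\sset0$. Moreover the regular representations form a continuous field, so if $v\in\overline{[u_{0}]}$ and $v_{i}\in[u_{0}]$ with $v_{i}\to v$, then $\Ind v$ is weakly contained in $\set{\Ind v_{i}}$; since each $\Ind v_{i}$ is equivalent to $\Ind u_{0}$, this yields $\ker(\Ind u_{0})\subset\ker(\Ind v)$. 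Minimality gives $\overline{[u_{0}]}=\go$, so $\ker(\Ind u_{0})\subset\bigcap_{v\in\go}\ker(\Ind v)=\sset0$, as needed. The delicate points to nail down are the continuity of the field of regular representations (to obtain the weak containment $\Ind v\prec\Ind u_{0}$ for $v$ in the orbit closure) and the identification $\cs(G)=C^{*}_{r}(G)$ with faithful regular representation under amenability; both are standard, and I would cite the corresponding results in \cite{wil:toolkit} and \cite{ren:jot91}.
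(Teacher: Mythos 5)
Your proof is correct, and its first half is exactly the paper's argument: using minimality of $[u]$ and discrete triviality at $u_{0}$ to verify the convergence criterion of Proposition~\ref{prop-conv-ts} and conclude $(u_{0},1)\in\overline{G\cdot(u,\pi)}$, hence $\psind(u,\pi)\subset\psind(u_{0},1)$ for every $(u,\pi)$. Where you diverge is in closing the gap. The paper simply proves the symmetric containment $(u,\pi)\in\overline{G\cdot(u_{0},1)}$: minimality gives $\gamma_{n}\cdot u_{0}\to u$, the conjugates $G(\gamma_{n}\cdot u_{0})=\sset{\gamma_{n}\cdot u_{0}}$ converge to the trivial subgroup $H=\sset{u}$ of $G(u)$, and since $\pi\restr H$ is automatically trivial, Proposition~\ref{prop-conv-ts} yields $\gamma_{n}\cdot(u_{0},1)\to(u,\pi)$ with no further input. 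This shows $\stabggt$ is a single point, so by surjectivity of $\psindq$ the space $\primcsg$ is a single point and $\cs(G)$ is simple (the unique primitive ideal being the intersection of all primitive ideals, hence $0$). You instead attack $\psind(u_{0},1)=\sset0$ head on, via $\cs(G)=C^{*}_{r}(G)$, faithfulness of $\bigoplus_{v}\Ind v$, and continuity of $v\mapsto\ker(\Ind v)$ along the dense orbit $[u_{0}]$; this is valid (the continuity you need is Lemma~\ref{lem-ind-cts} applied to the trivial subgroups $(\sset{v},1)$, and the weak containment follows from convergence of the constant sequence of kernels in $\I(\cs(G))$), and it has the virtue of identifying the unique primitive ideal concretely as the kernel of the regular representation at $u_{0}$. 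But it imports machinery (full equals reduced, the reduced norm as a supremum of regular representations) that the paper's framework does not otherwise need, and your "main obstacle" dissolves entirely if you run your own convergence argument once more in the opposite direction.
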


\begin{proof}
  In view Proposition~\ref{prop-main-cts}, it will suffice to
  see that $\stabggt$ is reduced to a single point.  Suppose
  $(u,\pi)\in\stabg$.  Since the orbit $[u_{0}]$ is dense, there are
  $\gamma_{n}\in G$ such that $\gamma_{n}\cdot u_{0}\to u$.  But
  $G(\gamma_{n}\cdot u_{0})=\sset{\gamma_{n}\cdot u_{0}}$ converges to
  $H=\sset u$ in $\so$.  Then it is not hard to see that
  $\bigl(G(\gamma_{n}\cdot u_{0}),1\bigr)=\gamma_{n}\cdot
  \bigl(G(u_{0}),1\bigr)$ converges to $(H,1)$ in $\Sigmah$.  It
  follows from Proposition~\ref{prop-conv-ts} that
  $\gamma_{n}\cdot (u_{0},1) \to (u,\pi)$ in $\stabg$.  Therefore
  \begin{equation}
    \label{eq:11}
    (u,\pi)\in \overline{G\cdot (u_{0},1)}.
  \end{equation}

  On the other hand, $[u]$ is dense and there are $\eta_{n}\in G$ such
  that $\eta_{n}\cdot u\to u_{0}$ in $\go$.  Since
  $G(u_{0})=\sset{u_{0}}$ it follows that
  $G(\eta_{n}\cdot u)\to G(u_{0})$ in $\so$
  \cite{wil:toolkit}*{Lemma~6.10}.  We claim that
  $\bigl(G(\eta_{n}\cdot u),\eta_{n}\cdot \pi\bigr)$ converges to
  $\bigl(G(u_{0}),1\bigr)$ in $\Sigmah$.  We will apply 
  Proposition~\ref{prop-mrw3.3}.  Suppose that
  $a_{n}\in G(\eta_{n}\cdot u)\to u_{0}$.  Then
  $K=\set{\eta_{n}\cdot u}\cup \sset{u_{0}}$ is compact.  Since the
  isotropy is discretely trivial at $u_{0}$, we eventually have
  $a_{n}=\eta_{n}\cdot u$.  Then $\eta_{n}\cdot \pi(a_{n})=1$ and the
  claim follows.  Therefore $\eta_{n}\cdot (u,\pi)$ converges to
  $(u_{0},1)$ in $\stabg$.  Then
  \begin{equation}
    \label{eq:12}
    (u_{0},1)\in \overline{G\cdot (u,\pi)}.
  \end{equation}

  Since $(u,\pi)\in\stabg$ is arbitrary, $\stabggt$ is a single point
  and the result is proved.
\end{proof}

\section{Subgroup Sections}
\label{sec:subgroup-sections}

\begin{definition}
  \label{def-sub-section} A continuous map $\sg:\go\to\so$ is called a
  \emph{subgroup section} if $p_{0}(\sg(u))=u$ for all $u\in\go$.  We
  call $\sg$ \emph{equivariant} if
  $\sg(\gamma\cdot u)=\gamma\cdot \sg(u)$.
\end{definition}

\begin{remark}
  In the extreme case that $u\mapsto G(u)$ is continuous,
  then $\sg(u)=G(u)$ is an equivariant subgroup section.  At the other
  extreme, $\sg(u)=\sset u$ for all $u\in\go$ is always an equivariant
  subgroup section. 
\end{remark}

Given a subgroup section $\sg$, we define a left action of
$\cc(\Sigma)$ on $\cc(G)$ by
\begin{equation}
  \label{eq:20}
  b\cdot f(\gamma)=\int_{\sgr}b\bigl(\sgr,t\bigr)f(t^{-1}\gamma)
  \,d\beta^{\sgr} (t).
\end{equation}
It is not hard to check that if $b\in\cc(\Sigma)$ and $f\in\cc(G)$,
then $b\cdot f\in \cc(G)$ (for example, see
\cite{wil:toolkit}*{Lemma~3.29}).

In the sequel, we will realize the multiplier algebra $M(\cs(G))$ as
the algebra of adjointable operators $\L(\cs(G))$ where $\cs(G)$ is
viewed a right Hilbert module over itself (see
\cite{rw:morita}*{\S2.3}).
\begin{prop}
  \label{prop-P-map} Suppose that $\sg$ is a subgroup section.  Then
  there is a homomorphism $P_{\sg}:\cs(\Sigma)\to M(\cs(G))$ such that
  for $b\in\cc(\Sigma)$ and $f\in \cc(G)$ we have
  $P_{\sg}(b)f=b\cdot f$ where the latter is given by \eqref{eq:20}.
\end{prop}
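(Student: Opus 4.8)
The plan is to realize $P_{\sg}(b)$, for $b\in\cc(\Sigma)$, as the operator $T_{b}\colon f\mapsto b\cdot f$ given by \eqref{eq:20}, to show that each $T_{b}$ extends to an adjointable operator on the right Hilbert $\cs(G)$-module $\cs(G)$---so that $T_{b}\in\L(\cs(G))=M(\cs(G))$---then to verify that $b\mapsto T_{b}$ is a $*$-homomorphism of $\cc(\Sigma)$ into $M(\cs(G))$, and finally to extend this homomorphism continuously to $\cs(\Sigma)$. That $T_{b}$ carries $\cc(G)$ into $\cc(G)$ has already been recorded after \eqref{eq:20} via \cite{wil:toolkit}*{Lemma~3.29}, so the first substantive task is adjointability.

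For adjointability I would exhibit the adjoint explicitly as $T_{b^{*}}$, where $b^{*}$ is the $\cc(\Sigma)$-involution $b^{*}(H,t)=\overline{b(H,t^{-1})}$; no modular function intervenes since the fibres of $\Sigma$ are abelian and hence unimodular. Two identities must be checked for all $f,g\in\cc(G)$, both by Fubini together with the left invariance of $\lambda$. First, that $T_{b}$ is a right $\cc(G)$-module map, $b\cdot(f*g)=(b\cdot f)*g$: here one uses that every $t\in\sgr$ satisfies $r(t)=s(t)=r(\gamma)$, so that the substitution $\eta\mapsto t\eta$ together with left invariance of $\lambda^{r(\gamma)}$ identifies the two iterated integrals. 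Second, the adjoint relation $\langle T_{b}f,g\rangle_{\cs(G)}=\langle f,T_{b^{*}}g\rangle_{\cs(G)}$, i.e.\ $(b\cdot f)^{*}*g=f^{*}*(b^{*}\cdot g)$: one expands $(b\cdot f)^{*}(\gamma)=\overline{(b\cdot f)(\gamma^{-1})}$, notes that $r(\gamma^{-1})=s(\gamma)$ so the relevant fibre is $\sg(s(\gamma))$, and uses the inversion invariance of $\beta^{\sg(s(\gamma))}$. Once both identities hold, the standard criterion for Hilbert modules---a densely defined module map with a densely defined formal adjoint extends to an adjointable operator---shows that $T_{b}$ is bounded and extends to $T_{b}\in\L(\cs(G))$ with $T_{b}^{*}=T_{b^{*}}$; in particular each $T_{b}$ is a genuine multiplier of $\cs(G)$.

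Next I would verify multiplicativity, $T_{b_{1}*b_{2}}=T_{b_{1}}T_{b_{2}}$, by the analogous Fubini computation: expanding $b_{1}\cdot(b_{2}\cdot f)$ collapses both inner integrations onto the single fibre $\sgr$ (because $r(s^{-1}\gamma)=r(\gamma)$ for $s\in\sgr$), and the substitution $r=st$ within the abelian fibre matches the result against $(b_{1}*b_{2})\cdot f$. Combined with $T_{b^{*}}=T_{b}^{*}$ from the previous step, this makes $b\mapsto T_{b}$ a $*$-homomorphism $P_{\sg}^{0}\colon\cc(\Sigma)\to M(\cs(G))$ satisfying the stated formula.

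The remaining point is to extend $P_{\sg}^{0}$ to $\cs(\Sigma)$, and this is where the one genuinely non-mechanical estimate lies: contractivity with respect to the universal norm of $\cs(\Sigma)$, and not merely the $I$-norm. I would fix a faithful nondegenerate representation $\rho$ of $\cs(G)$ on some $\H$, with canonical extension $\bar\rho$ to $M(\cs(G))$, which is isometric because $\rho$ is faithful and nondegenerate. Then $\sigma:=\bar\rho\circ P_{\sg}^{0}$ is a $*$-representation of $\cc(\Sigma)$ on $\H$, and it is continuous for the inductive limit topology---this is immediate from \eqref{eq:20}, since $b\mapsto b\cdot f$ carries inductive-limit convergence into norm convergence in $\cs(G)$. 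By the universal property of $\cs(\Sigma)$, $\sigma$ is therefore dominated by $\|\cdot\|_{\cs(\Sigma)}$, whence $\|P_{\sg}^{0}(b)\|=\|\sigma(b)\|\le\|b\|_{\cs(\Sigma)}$. Since $\cc(\Sigma)$ is dense in $\cs(\Sigma)$, $P_{\sg}^{0}$ extends to the desired homomorphism $P_{\sg}\colon\cs(\Sigma)\to M(\cs(G))$. I expect the chief practical difficulty to be bookkeeping in the adjointness and multiplicativity computations---keeping straight which fibre of $\Sigma$ the section $\sg$ selects at each groupoid element, and invoking left invariance of $\lambda$ only over the isotropy where it is legitimate---while the conceptual crux is the contractivity estimate obtained through $\rho$ and the universal property of $\cs(\Sigma)$.
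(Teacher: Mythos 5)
Your algebraic program---the module-map identity $b\cdot(f*g)=(b\cdot f)*g$, the formal adjoint relation $(b\cdot f)^{*}*g=f^{*}*(b^{*}\cdot g)$, multiplicativity, and the inductive-limit continuity of $b\mapsto \Rip<P(b)f,g>$---is exactly what the paper's sketch verifies. (Note only that the paper's computations invoke the function $\omega$ of Lemma~\ref{lem-3.25} together with $\omega(\eta t,H)=\omega(\eta,H)$ for $t\in H$ in order to move the fibre integrals past the isotropy; your assertion that left invariance of $\lambda$ and inversion invariance of $\beta$ suffice underestimates that bookkeeping, though this is repairable.) The genuine gap is the step where you pass from these identities to $T_{b}\in\L(\cs(G))$. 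The ``standard criterion'' you invoke---that a \emph{densely defined} module map with a densely defined formal adjoint is automatically bounded and extends to an adjointable operator---is false. It holds for everywhere-defined maps, but on a dense submodule it fails: take $E=B=C_{0}(\R)$ as a Hilbert module over itself, $E_{0}=\cc(\R)$, and $T$ multiplication by the real coordinate function; then $T$ is a right module map satisfying $\langle Tf,g\rangle=\langle f,Tg\rangle$ on $E_{0}$, yet it is unbounded. Since $T_{b}$ is defined only on $\cc(G)$, nothing you have written shows it is bounded for the $\cs(G)$-norm.

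This is not cosmetic, because your route to the norm estimate is then circular: the representation $\sigma=\bar\rho\circ P_{\sg}^{0}$ only makes sense once each $P_{\sg}^{0}(b)$ is already known to lie in $M(\cs(G))$. The repair is the one the paper uses: feed the adjoint relation, the multiplicativity $P(b*b')f=P(b)(P(b')f)$, and the inductive-limit continuity of $b\mapsto\Rip<P(b)f,g>$ \emph{simultaneously} into \cite{kmqw:nyjm10}*{Proposition~1.7}, whose proof runs a positivity-and-approximation argument of the form $\Rip<P(b)f,P(b)f>=\Rip<f,P(b^{*}*b)f>\le\|b\|_{\cs(\Sigma)}^{2}\Rip<f,f>$ to obtain boundedness, adjointability, and the estimate $\|P(b)\|\le\|b\|_{\cs(\Sigma)}$ in one stroke. (The paper also notes that the nondegeneracy hypothesis of that proposition can be dispensed with via \cite{wil:toolkit}*{Corollary~8.4}.) Once this is done, the extension to $\cs(\Sigma)$ is automatic and your faithful-representation argument becomes redundant.
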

\begin{remark}
  When the choice of $\sg$ is clear, we simply write $P$ in place of
  $P_{\sg}$.  In the case $u\mapsto G(u)$ is continuous and
  $\sg(u)=G(u)$, this result is a special case of
  \cite{goe:rmjm12}*{Proposition~2.19}.
\end{remark}

\begin{proof}[Sketch of the Proof]  
  If $f,g\in\cc(G)$, then we let $\Rip<f,g>=f^{*}*g$.  Then we can use
  Lemma~\ref{lem-3.25} and the observation that
  $\omega(\eta t,H)=\omega(\eta,H)$ if $t\in H$ to conclude that for
  all $b\in \cc(\Sigma)$ we have
  \begin{equation}
    \label{eq:23}
    \Rip<P(b)f,g>=\Rip<f,P(b^{*})g>.
  \end{equation}
  Similar calculations show that $P(b*b')f=P(b)\bigl(P(b')f\bigr)$ and
  that $b\mapsto \Rip<P(b)f,g>$ is continuous when $\cc(G)$ is given
  the inductive limit topology.  Then we can use
  \cite{kmqw:nyjm10}*{Proposition~1.7} to see that $P$ extends to a
  homomorphism of $\cs(\Sigma)$ into $\L(\cs(G))=M(\cs(G))$.  (We can
  dispense with condition~(iii) of
  \cite{kmqw:nyjm10}*{Proposition~1.7} using
  \cite{wil:toolkit}*{Corollary~8.4}.)
\end{proof}

\begin{cor}\label{cor-pstar-cts}
  Let $\sg$ be a subgroup section and let $P=P_{\sg}$ be the
  corresponding map.  Then there is a continuous map
  $P^{*}:\I(\cs(G))\to\I(C_{0}(\Sigmah))$ given by
  \begin{equation}
    \label{eq:24}
    P^{*}(J)=\set{\hat b\in C_{0}(\Sigmah):P(b)\cdot \cs(G)\subset J}.
  \end{equation}
\end{cor}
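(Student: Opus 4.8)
The plan is to recognize $P^{*}$ as a map on ideals produced by the Rieffel machine, so that its continuity becomes an instance of \cite{rw:morita}*{Corollary~3.35} rather than something to prove by hand. Recall from the paragraph preceding Proposition~\ref{prop-P-map} that we realize $M(\cs(G))$ as $\L(\cs(G))$, where $\cs(G)$ is viewed as a right Hilbert module over itself. Thus $P=P_{\sg}\colon \cs(\Sigma)\to M(\cs(G))=\L(\cs(G))$ equips $\cs(G)$ with a left action of $\cs(\Sigma)$ by adjointable operators, turning it into a right Hilbert $\cs(G)$-module together with a homomorphism $\cs(\Sigma)\to\L(\cs(G))$. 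The Rieffel machine then induces each representation $\pi$ of $\cs(G)$ to a representation $\Ind\pi$ of $\cs(\Sigma)$, and by \cite{rw:morita}*{Corollary~3.35} the assignment $\ker\pi\mapsto\ker(\Ind\pi)$ is a well-defined continuous map $\I(\cs(G))\to\I(\cs(\Sigma))$. Under the Gelfand identification $\cs(\Sigma)\cong C_{0}(\Sigmah)$, this is exactly a continuous map $\I(\cs(G))\to\I(C_{0}(\Sigmah))$.

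It then remains to identify this induced map with the $P^{*}$ of \eqref{eq:24}. Fix a representation $\pi$ of $\cs(G)$ and put $J=\ker\pi$. The standard description of the kernel of a Rieffel-induced representation gives
\begin{equation*}
  \ker(\Ind\pi)=\set{a\in\cs(\Sigma): P(a)\cdot\cs(G)\subset \cs(G)\cdot J}.
\end{equation*}
Because $J$ is a closed two-sided ideal, an approximate identity for $\cs(G)$ (or Cohen factorization) yields $\cs(G)\cdot J=J$, so the right-hand side is precisely $\set{a\in\cs(\Sigma):P(a)\cdot\cs(G)\subset J}=P^{*}(J)$. This single identification does double duty: being the kernel of a representation of $\cs(\Sigma)$, $P^{*}(J)$ is automatically a closed two-sided ideal, so that $P^{*}$ is a well-defined map into $\I(C_{0}(\Sigmah))$; and $P^{*}$ agrees with the induced map, hence is continuous.

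I expect the one genuinely delicate point to be pinning down the induced-ideal formula in a usable form; depending on the exact statement one quotes, one may instead arrive at $\ker(\Ind\pi)=\set{a:\cs(G)\,P(a)\,\cs(G)\subset J}$, but this collapses to the same set by the same approximate-identity argument. Should one prefer to avoid the machine entirely, the corollary also admits a direct proof: that $P^{*}(J)$ is a closed ideal follows from $P$ being a homomorphism together with the standard fact $M(\cs(G))\cdot J\subset J$, while continuity follows because, writing $\Phi(K)$ for the closed two-sided ideal of $\cs(G)$ generated by $\set{P(b)f:b\in K,\ f\in\cs(G)}$, one checks directly that $K\subset P^{*}(J)\iff\Phi(K)\subset J$, whence $(P^{*})^{-1}(\O_{K})=\O_{\Phi(K)}$ is open for every $K$.
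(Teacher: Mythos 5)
Your argument is correct, but it takes a different route from the paper. The paper disposes of this corollary in one line by quoting \cite{wil:crossed}*{Lemma~8.35}, which is precisely the general statement that a homomorphism $\phi\colon A\to M(B)$ induces a continuous map $\I(B)\to\I(A)$, $J\mapsto\set{a:\phi(a)B\subset J}$; the Gelfand isomorphism $\cs(\Sigma)\cong C_{0}(\Sigmah)$ then converts this into \eqref{eq:24}. You instead reconstruct that lemma from the Rieffel machine, viewing $\cs(G)$ as a right Hilbert module over itself with $\cs(\Sigma)$ acting on the left through $P$, and invoking \cite{rw:morita}*{Corollary~3.35} --- the same citation the paper uses for Lemma~\ref{lem-ind-cts} --- followed by the approximate-identity argument matching $\ker(\Ind\pi)$ with $\set{a:P(a)\cs(G)\subset\ker\pi}$. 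That reconciliation is the only point of substance in your version and you handle it correctly (including the variant formula $\set{a:\cs(G)P(a)\cs(G)\subset J}$, which collapses to the same set since $J$ is a closed two-sided ideal); an even quicker way to see it is that, $\pi$ being nondegenerate, $\cs(G)\otimes_{\cs(G)}\H_{\pi}\cong\H_{\pi}$ and $\Ind\pi=\bar\pi\circ P$, whence $\ker(\Ind\pi)=\set{a:\pi(P(a)f)=0\text{ for all }f}$. Your closing direct argument --- $(P^{*})^{-1}(\O_{K})=\O_{\Phi(K)}$ --- is also valid and is essentially how the cited Lemma~8.35 is proved; it has the virtue of being self-contained, whereas the paper's citation and your Rieffel-machine derivation both lean on prior machinery. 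No gaps.
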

\begin{proof}
  This is \cite{wil:crossed}*{Lemma~8.35} combined with the Gelfand
  isomorphism of $\cs(\Sigma)$ with $C_{0}(\Sigmah)$.
\end{proof}

\begin{prop}
  \label{prop-pstar} Suppose that $\sg$ is a subgroup section and that
  $P^{*}:\I(\cs(G))\to\I(C_{0}(\Sigmah))$ is the associated map.  If
  $J(u,\pi)=\ker(\irrind(u,\pi))$, then $P^{*}(J(u,\pi))$ is the ideal
  of functions in $C_{0}(\Sigmah)$ that vanish on the closure of
  \begin{equation}
    \label{eq:25}
    \set{\bigl(\sg(\gamma\cdot u),(\gamma\cdot \pi)\restr{\sg(\gamma\cdot
        u)}\bigr)\in \Sigmah:\gamma\in G_{u}}
  \end{equation}
  in $\Sigmah$.
\end{prop}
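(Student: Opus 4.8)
The plan is to reduce the statement to a support computation for a representation of the commutative $\cs$-algebra $C_{0}(\Sigmah)$. Write $L=\irrind(u,\pi)$ with canonical extension $\overline L$ to $M(\cs(G))=\L(\cs(G))$. For $b\in\cs(\Sigma)$ we have $L(P(b)f)=\overline L(P(b))L(f)$, so by nondegeneracy of $L$ the condition $P(b)\cs(G)\subset\ker L=J(u,\pi)$ from \eqref{eq:24} is equivalent to $\overline L(P(b))=0$. Under the Gelfand identification $\cs(\Sigma)\cong C_{0}(\Sigmah)$, this says exactly that $P^{*}(J(u,\pi))$ is the kernel of the $*$-homomorphism $\rho\colon C_{0}(\Sigmah)\to B(\H_{L})$ determined by $\rho(\hat b)=\overline L(P(b))$. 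Since every closed ideal of $C_{0}(\Sigmah)$ has the form $I(F)$ for a closed set $F$, it suffices to show $\ker\rho=I(\overline S)$, where $S$ denotes the set in \eqref{eq:25}; that is, for $g\in C_{0}(\Sigmah)$ one has $\rho(g)=0$ iff $g$ vanishes on $S$.

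The computational heart is an explicit formula for $\rho$. Using Lemma~\ref{lem-ind-equiv} I will work in the $\Ind_{G(u)}^{G}\pi$ picture on the completion of $\cc(G_{u})$ from \eqref{eq:5}. A direct check that $(b\cdot f)\cdot\phi=b\cdot(f\cdot\phi)$, where $b\cdot\psi(\gamma):=\int_{\sgr}b(\sgr,t)\psi(t^{-1}\gamma)\,d\beta^{\sgr}(t)$, together with $\overline L(P(b))L(f)=L(b\cdot f)$ and density of the vectors $L(f)\phi$, identifies $\overline L(P(b))$ with the operator $\psi\mapsto b\cdot\psi$ on $\cc(G_{u})$. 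I then compute $\ip(\rho(b)\psi|\psi)$: substituting $w=\gamma z\gamma^{-1}$ (with $z$ ranging over $\gamma^{-1}\sgr\gamma\subset G(u)$) in the inner integral and translating $t\mapsto zt$ lets the $z$-integral separate off. Crucially, the two factors of the conjugation cocycle $\omega$ from Lemma~\ref{lem-3.25} cancel, and the separated integral is precisely the Gelfand transform of \eqref{eq:10} evaluated at the restricted, conjugated character. This yields
\begin{equation*}
  \ip(\rho(b)\psi|\psi)=\int_{G_{u}}\Phi_{\psi}(\gamma)\,\hat b\bigl(\sgr,(\gamma\cdot\pi)\restr{\sgr}\bigr)\,d\lambda_{u}(\gamma),\qquad
  \Phi_{\psi}(\gamma):=\int_{G(u)}\overline{\psi(\gamma)}\psi(\gamma t)\pi(t)\,d\beta^{G(u)}(t).
\end{equation*}
In other words $\ip(\rho(g)\psi|\psi)=\int_{\Sigmah}g\,d\mu_{\psi}$ for the finite complex measure $\mu_{\psi}=\Theta_{*}(\Phi_{\psi}\,d\lambda_{u})$, where $\Theta(\gamma)=\bigl(\sgr,(\gamma\cdot\pi)\restr{\sgr}\bigr)$; I extend this identity from $\cc(\Sigma)$ to all of $C_{0}(\Sigmah)$ by sup-norm continuity of both sides. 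Observe $\Theta(G_{u})=S$ and $\mu_{\psi}(\Sigmah)=\ip(\psi|\psi)$.

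From here both inclusions are short. Since $\mu_{\psi}$ is supported on $\overline{\Theta(G_{u})}=\overline S$, if $g\in I(\overline S)$ then $\ip(\rho(\bar g g)\psi|\psi)=\int|g|^{2}\,d\mu_{\psi}=0$; as $\rho(\bar g g)=\rho(g)^{*}\rho(g)$ this forces $\rho(g)\psi=0$ for all $\psi\in\cc(G_{u})$, hence $\rho(g)=0$, giving $I(\overline S)\subset\ker\rho$. For the reverse inclusion I show that if $g(\Theta(\gamma_{0}))\ne0$ for some $\gamma_{0}\in G_{u}$, then $\rho(g)\ne0$: taking $\psi\ge0$ supported in a shrinking neighborhood $W$ of $\gamma_{0}$, the only $t\in G(u)$ contributing to $\Phi_{\psi}$ satisfy $t=\gamma^{-1}\gamma'$ with $\gamma,\gamma'\in W$, so $t\to u$ and $\pi(t)\to1$; thus $\mu_{\psi}$ is an approximate point mass at $\Theta(\gamma_{0})$ of total mass $\ip(\psi|\psi)>0$ with comparable total variation, whence $\ip(\rho(g)\psi|\psi)/\ip(\psi|\psi)\to g(\Theta(\gamma_{0}))\ne0$. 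This yields $\ker\rho\subset I(S)=I(\overline S)$ and completes the proof.

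The main obstacle is the cocycle bookkeeping behind the displayed formula for $\ip(\rho(b)\psi|\psi)$: arranging the conjugation change of variables and the translation $t\mapsto zt$ so that the compensating factors of $\omega$ cancel exactly, and confirming that what survives is genuinely $\hat b$ at $(\gamma\cdot\pi)\restr{\sgr}$. The approximate-point-mass estimate in the reverse inclusion is routine but needs the uniform bound $|\mu_{\psi}|(\Sigmah)\le C\,\ip(\psi|\psi)$ as $W\downarrow\{\gamma_{0}\}$, which holds because $\pi(t)\to1$ makes $\Phi_{\psi}$ approximately nonnegative on the relevant support.
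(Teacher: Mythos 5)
Your proposal follows essentially the same route as the paper's proof: identify $P^{*}(J(u,\pi))$ with the kernel of $\hat b\mapsto \overline L(P(b))$ (where $\overline L$ is the canonical extension of $L=\Ind_{G(u)}^{G}\pi$), compute the matrix coefficients by conjugating the isotropy variable and letting the two $\omega$-factors from Lemma~\ref{lem-3.25} cancel, and read off both inclusions from the resulting integral formula. Your pushforward-measure packaging of the converse inclusion (approximate point masses at $\Theta(\gamma_{0})$, with the total-variation bound coming from $\pi(t)\to 1$ on shrinking supports) replaces the paper's explicit construction of test functions $h,\zeta,f,k,g$, but it is the same localization argument exploiting full support of $\lambda_{u}$ and $\beta^{G(u)}$; both work.

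The one step that is under-justified as written is the identification of $\overline L(P(b))$ with $\psi\mapsto b\cdot\psi$ on all of $\cc(G_{u})$. Agreement on the dense set of vectors $L(f)\phi$ determines the bounded operator $\overline L(P(b))$, but it does not by itself give its value on an arbitrary $\psi\in\cc(G_{u})$ — and in your converse direction you must evaluate it on a $\psi$ supported in a small neighborhood of $\gamma_{0}$, which need not lie in that dense set. The repair is exactly the paper's Lemma~\ref{lem-technical}: take an approximate identity $(e_{i})$ for $\cc(G)$ in the inductive limit topology, use $b\cdot(e_{i}*\psi)=(b\cdot e_{i})*\psi$ together with $e_{i}*\psi\to\psi$ and $b\cdot(e_{i}*\psi)\to b\cdot\psi$ in the Hilbert-space norm, and invoke boundedness of $\overline L(P(b))$. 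With that inserted (and with the routine verification that $\gamma\mapsto\bigl(\sgr,(\gamma\cdot\pi)\restr{\sgr}\bigr)$ is continuous, so that your pushforward measure is genuinely carried on the closure of the set in \eqref{eq:25}), your argument is complete.
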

\begin{remark}\label{rem-pstar}
  If $\sg$ is equivariant then
  $(\gamma\cdot\pi)\restr{\sg(\gamma\cdot u)}=\gamma\cdot
  (\pi\restr{\sgu})$ and $P^{*}(J(u,\pi))$ is the ideal of functions
  vanishing on the orbit closure
  $\overline{G\cdot (\sgu,\pi\restr{\sgu})}$.
\end{remark}

We will need some preliminaries before embarking on the proof of the
proposition.  Let $L=\Ind_{G(u)}^{G}\pi$.  Since $L$ is equivalent to
$\irrind(u,\pi)$, $\ker L=J(u,\pi)$.  Recall that $L$ acts on $\H$
which is the completion of $\cc(G)$ with respect to the inner product
$\ipu(\cdot | \cdot)$ given in \eqref{eq:5} by taking $H=G(u)$.  (There is
no harm in substituting $\cc(G)$ for $\cc(G_{u})$ here.)  We let $\uL$
be the representation of $\cs(\Sigma)$ on $\H$ given by
$\bar L\circ P$ where $\bar L$ is the canonical extension of $L$ to
$M(\cs(G))$.

\begin{lemma}
  Let $b\in\cs(\Sigma)$.  Then $\hat b\in P^{*}(J(u,\pi))$ if and only
  if
  \begin{equation}
    \label{eq:26}
    \bip(\uL(b)f|g)_{u}=0\quad\text{for all $f,g\in \cc(G)$.}
  \end{equation}
\end{lemma}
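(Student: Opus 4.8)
The plan is to show that each of the two conditions in the statement is equivalent to the single operator equation $\uL(b)=0$ on the Hilbert space $\H$ of $L=\Ind_{G(u)}^{G}\pi$; since $L$ is equivalent to $\irrind(u,\pi)$ we have $\ker L=J(u,\pi)$, and by definition $\uL(b)=\bar L(P(b))$, where $\bar L$ is the canonical extension of $L$ to $M(\cs(G))$. Isolating $\uL(b)=0$ as the common intermediate statement turns the lemma into two short unravelings.

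First I would treat the left-hand side. By the formula \eqref{eq:24} for $P^{*}$, the condition $\hat b\in P^{*}(J(u,\pi))$ says exactly that $P(b)c\in J(u,\pi)=\ker L$ for every $c\in\cs(G)$ (note $P(b)c\in\cs(G)$ since $P(b)$ is a multiplier), i.e.\ $L(P(b)c)=0$ for all $c$. Using the defining identity $\bar L(m)L(c)=L(mc)$ for $m\in M(\cs(G))$, this becomes $\uL(b)L(c)=0$ for all $c\in\cs(G)$. Since $L$ is nondegenerate, the vectors $L(c)\xi$ (with $c\in\cs(G)$ and $\xi\in\H$) span a dense subspace of $\H$, so $\uL(b)$ annihilates a dense subspace and hence $\uL(b)=0$; the reverse implication is trivial. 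This passage from $\uL(b)L(c)=0$ to $\uL(b)=0$, resting on nondegeneracy of $L$, is the only step requiring any care, and is what I would flag as the crux—without nondegeneracy one could conclude only that $\uL(b)$ vanishes on $\overline{L(\cs(G))\H}$.

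For the right-hand side, recall that $\cc(G)$ sits densely in $\H$ and that $\ipu(\cdot|\cdot)$ is the inner product of $\H$. Hence $\bip(\uL(b)f|g)_{u}=0$ for all $f,g\in\cc(G)$ says that $\uL(b)f$ is orthogonal to a dense subspace for each $f\in\cc(G)$, forcing $\uL(b)f=0$ on the dense set $\cc(G)$ and therefore $\uL(b)=0$; again the converse is immediate. Thus both conditions are equivalent to $\uL(b)=0$, and hence to each other. One bookkeeping point I would keep in mind is that the explicit integral formula \eqref{eq:20} for $P(b)f$ is given only on the dense subalgebra $\cc(\Sigma)$, so assertions for general $b\in\cs(\Sigma)$ are to be obtained from the homomorphism property of $P$ together with continuity rather than from \eqref{eq:20} directly.
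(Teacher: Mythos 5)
Your argument is correct and is in substance the paper's proof: both reduce the claim to the multiplier identity $\uL(b)L(e)=L(P(b)e)$ together with density of $\cc(G)$ in $\H$, the paper's approximate-identity computation being exactly the concrete form of your appeal to nondegeneracy of $L$. Routing both conditions through the single statement $\uL(b)=0$ is a harmless repackaging, and your closing caveat about \eqref{eq:20} being defined only on $\cc(\Sigma)$ is the right thing to keep in mind.
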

\begin{proof}
  We have $\hat b\in P^{*}(J(u,\pi))$ if and only if
  \begin{equation}
    \label{eq:27}
    \bip(\uL(b)L(e)f| g)_{u}=\bip(L(P(b)e) f| g)_{u}=0\quad\text{for
      all $e,f,g\in\cc(G)$.} 
  \end{equation}
  Since $\bip(\uL(b)L(e)f| g)_{u}=\bip(\uL(b)(e*f)|g)_{u}$, it follows
  that \eqref{eq:26} implies $\hat b\in P^{*}(J(u,\pi))$.

  On the other hand, if $(e_{i})$ is an approximate identity for
  $\cc(G)$ in the inductive limit topology
  (\cite{wil:toolkit}*{Proposition~1.49}), then
  \begin{align}
    \label{eq:28}
    \bip(\uL(b)f|g)_{u}
    &=\bip(f| \uL(b)^{*}g)_{u}=\lim_{i}\bip(L(e_{i})f|\uL(b)^{*}g)_{u}=
      \lim_{i} \bip(e_{i}*f| \uL(b)^{*}g)_{u} \\
    &= \bip(\uL(b)(e_{i}*f)|g)_{u}.
  \end{align}
  Therefore if $\hat b\in P^{*}(J(u,\pi))$, we have
  $\bip(\uL(b)f|g)_{u}=0$ for all $f,g\in\cc(G)$.
\end{proof}

We need a technical observation.  As in the previous proof, we employ
an approximate identity $(e_{i})$ for $\cc(G)$ in the inductive limit
topology.

\begin{lemma}
  \label{lem-technical} If $b\in\cc(\Sigma)$, then
  \begin{equation}
    \label{eq:29}
    \ipu(\uL(b)f|g)=\ipu(b\cdot f|g).
  \end{equation}
  (This is equivalent to saying that $\uL(b)f=b\cdot f$ as elements of
  the Hilbert space $\H$.)
\end{lemma}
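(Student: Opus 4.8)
The plan is to prove the identity $\ipu(\uL(b)f|g)=\ipu(b\cdot f|g)$ by a direct computation, unwinding the definitions of both sides and showing that the integrands match. Let me sketch what each side looks like and identify where the work lies.

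Let me start with the left-hand side. We have $\uL = \bar L \circ P$, so $\uL(b)f = \bar L(P(b))f$. Since $\bar L$ is the canonical extension of $L = \Ind_{G(u)}^{G}\pi$ to the multiplier algebra, and $P(b)$ acts on $\cc(G)$ via the formula $P(b)f = b\cdot f$ from \eqref{eq:20}, I need to see that $\bar L(P(b))f$ agrees with $b\cdot f$ as a vector in $\H$.

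Now I should think carefully about what goes into the plan. The subtlety is that $\uL(b)f$ is defined via the multiplier-algebra action, not by literally applying $b\cdot$ to $f$. So I'd want to write:

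The plan is to unwind both sides of \eqref{eq:29} and check the integrands agree. For the right-hand side, $b\cdot f$ is the honest $\cc(G)$-function given by \eqref{eq:20}, and $\ipu(b\cdot f|g)$ is computed directly from the inner product \eqref{eq:5} with $H=G(u)$. For the left-hand side, I would use the approximate identity $(e_i)$. Since $\bar L$ is the canonical extension of $L$ to $M(\cs(G))$, we have $\bar L(P(b))L(e_i) = L(P(b)e_i) = L(b\cdot e_i)$, and passing to the limit along $(e_i)$ in the inductive limit topology gives $\uL(b)f = \lim_i L(b\cdot e_i)f$ in $\H$. The main point is then to verify that $\lim_i (b\cdot e_i)*f = b\cdot f$ in a sense strong enough to identify the two inner products; this is where the associativity relation $P(b*b')f = P(b)(P(b')f)$ and the fact that $(e_i)$ is an approximate identity in the inductive limit topology do the work.

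Here is the concrete plan. First I would use the defining relation $P(b)L(e) = L(P(b)e)$ on the dense subspace $\set{L(e)f : e,f\in\cc(G)}$, which holds because $P$ is a homomorphism into $M(\cs(G))=\L(\cs(G))$ and $\bar L$ restricts to $L$ on $\cs(G)$. Explicitly, for $e,f\in\cc(G)$,
\begin{equation}
  \uL(b)L(e)f = \bar L(P(b))L(e)f = L(P(b)e)f = L\bigl((b\cdot e)*f\bigr)=(b\cdot e)*f
\end{equation}
as elements of $\H$ (recall $L(h)f = h*f$). Then, taking an approximate identity $(e_i)$ and using the continuity of $\uL(b)$ together with $L(e_i)f\to f$ in $\H$, I obtain $\uL(b)f = \lim_i (b\cdot e_i)*f$. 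The remaining task is to show $\ipu((b\cdot e_i)*f|g)\to \ipu(b\cdot f|g)$. The natural route is to observe that $b\cdot$ commutes appropriately with convolution, so that $(b\cdot e_i)*f = b\cdot(e_i*f)$, and then use $e_i*f\to f$ in the inductive limit topology together with continuity of the $\cc(\Sigma)$-action from Proposition~\ref{prop-P-map}. This hands us $b\cdot(e_i*f)\to b\cdot f$, and since $g\in\cc(G)$ is fixed, continuity of the inner product in the inductive limit topology finishes the argument.

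The main obstacle I anticipate is the interchange $(b\cdot e_i)*f = b\cdot(e_i*f)$, or more precisely justifying that the limit can be taken \emph{inside} the inner product rather than merely as a weak-operator statement. The honest content is that the partially defined operator $f\mapsto b\cdot f$ on $\cc(G)$ and the multiplier action $\uL(b)$ agree, and this requires that convergence of $e_i*f\to f$ in the inductive limit topology passes through the $\cc(\Sigma)$-action and then through $\ipu(\cdot|g)$. Both of these continuity facts are available: the first is part of the construction of $P_\sg$ in Proposition~\ref{prop-P-map} (where continuity of $b\mapsto\Rip<P(b)f,g>$ in the inductive limit topology was already used), and the second is the standard continuity of the groupoid inner product \eqref{eq:5} in the inductive limit topology. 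Assembling these gives the claimed equality, and hence the parenthetical reformulation that $\uL(b)f = b\cdot f$ as elements of $\H$.
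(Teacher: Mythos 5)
Your argument is essentially the paper's proof: both pass through the approximate identity $(e_{i})$, use $\uL(b)L(e_{i})f=L(P(b)e_{i})f=(b\cdot e_{i})*f$, invoke the interchange $(b\cdot e_{i})*f=b\cdot(e_{i}*f)$, and conclude via convergence of $b\cdot(e_{i}*f)\to b\cdot f$ in the inductive limit topology together with continuity of $\ipu(\cdot|g)$. The only correction is that the interchange is justified not by the associativity $P(b*b')f=P(b)\bigl(P(b')f\bigr)$ (which concerns multiplication in $\cc(\Sigma)$) but by the fact that $P(b)\in\L(\cs(G)_{\cs(G)})$ respects the right $\cs(G)$-action, so $P(b)(e*f)=(P(b)e)*f$ --- or, as the paper remarks, by direct computation.
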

\begin{proof}
  Since $P(b)\in \L(\cs(G)_{\cs(G)})$, it respects the right action of
  $\cs(G)$.  Thus if $e,f\in\cc(G)$, $P(b)(e*f)=(P(b)e)*f$.  Thus in
  $\cs(G)$, $b\cdot (e*f)=(b\cdot e)*f$.  Since the universal norm is
  a norm on $\cc(G)$, this means $b\cdot (e*f)=(b\cdot e)*f$ in
  $\cc(G)$.  (This may also be verified directly.)  Then
  \begin{align}
    \label{eq:30}
    \ipu(\uL(b)f|g)
    &=\lim_{i}\ipu(\uL(b)L(e_{i})f|g)
      =\lim_{i}\ipu(L(P(b)e_{i})f|g)
      =\lim_{i}\ipu((b\cdot e_{i})*f|g)\\
    &=\lim_{i}\ipu(b\cdot (e_{i}*f)|g)
      =\lim_{i}\ipu(b\cdot f|g)
  \end{align}
  since $b\cdot (e_{i}*f)\to b\cdot f$ in the inductive limit
  topology.
\end{proof}

\begin{proof}[Proof of Proposition~\ref{prop-pstar}]
  Using Lemma~\ref{lem-technical}, observe that if $b\in\cc(\Sigma)$,
  then
  \begin{align}
    \ipu(\uL(b)f|g) = \ip(b\cdot f|g)_{u} \hskip-4cm
    &\\
    &=\int_{G(u)}\int_{G}\overline{g(\gamma)} b\cdot f(\gamma t)\pi(t)
      \,d\lambda_{u}(\gamma) \,d\beta^{G(u)}(t) \\
    &= \int_{G}\int_{G(u)}\int_{\sgr} \overline{g(\gamma)}
      b\bigl(\sgr,s\bigr) f(s^{-1}\gamma t) \pi(t)
    \\
    &\hskip3in d\beta^{\sgr}(s) \,d\beta^{G(u)}(t)
      \,d\lambda_{u}(\gamma) \\
    \intertext{which, after $t\mapsto \gamma^{-1} t \gamma$, is}
    &=\int_{G}\omega(\gamma,G(u)) \int_{G(r(\gamma))} \int_{\sgr}
      \overline{g(\gamma)} b\bigl(\sgr,s\bigr) f(s^{-1}t \gamma)
      \pi(\gamma^{-1}t \gamma) \\
    &\hskip3in d\beta^{\sgr}(s) \,d
      \beta^{G(r(\gamma))}(t) \,d\lambda_{u}(\gamma)\\
    \intertext{which, after Fubini and $t\mapsto st$, is}
    &= \int_{G}\omega(\gamma,G(u)) \int_{\sgr}
      \int_{G(r(\gamma))} \overline{g(\gamma)}
      b\bigl(\sgr,s\bigr) f(t\gamma) \pi(\gamma^{-1}st\gamma) \\
    &\hskip3in d\beta^{G(r(\gamma))}(t) \,d\beta^{\sgr}(s) \,d
      \lambda_{u}(\gamma) \\
    &= \int_{G} \omega(\gamma,G(u)) \overline{g(\gamma)}\Bigl(
      \int_{\sgr} 
      b\bigl(\sgr,s\bigr) \pi(\gamma^{-1}s\gamma)
      \,d\beta^{\sgr}(s) \Bigr) \\
    &\hskip 1in \int_{G(r(\gamma))}
      f(t\gamma)\pi(\gamma^{-1}t\gamma)\,d\beta^{G(r(\gamma))} (t)
      \,d\lambda_{u}(\gamma) \\
    &= \int_{G} \omega(\gamma,G(u)) \overline{g(\gamma)}\hat
      b\bigl(\sgr,(\gamma\cdot \pi)\restr{\sgr}\bigr)
      \int_{G(r(\gamma))}
      f(t\gamma)\pi(\gamma^{-1}t\gamma)\\
    &\hskip3in d\beta^{G(r(\gamma)} (t)
      \,d\lambda_{u}(\gamma) \\
    &=\int_{G}\int_{G(u)} \hat b\bigl(\sgr,(\gamma\cdot \pi)
      \restr{\sgr})) 
      \overline{g(\gamma) }f(\gamma 
      t)\pi(t) 
      \,d\beta^{G(u)}(t) \,d\lambda_{u}(\gamma) \\
    &=\int_{G} \hat b\bigl(\sgr, (\gamma\cdot
      \pi) \restr{\sgr})\bigr) \overline{g(\gamma) } \hat 
      f(\gamma,\pi) \,d\lambda_{u}(\gamma)\label{eq:38}
  \end{align}
  where
  \begin{align}
    \label{eq:37}
    \hat f(\gamma,\pi):=\int_{G(u)} f(\gamma t)\pi(t)\,d\beta^{G(u)}(t).
  \end{align}

  Note that $\hat f$ is continuous on $G_{u}$.  Hence
  $\gamma \mapsto g(\gamma)\hat f(\gamma,\pi)$ is in $\cc(G_{u})$.  If
  $b\in\cs(\Sigma)$, then there is a sequence $(b_{i})$ in
  $\cc(\Sigma)$ converging to $b\in\cs(\Sigma)$ and bounded in norm in
  $\cs(\Sigma)$.  Then $(\hat b_{i})$ is bounded in $C_{0}(\Sigmah)$
  and $\hat b_{i}\to \hat b$ uniformly.  Thus
  \begin{multline}
    \label{eq:57}
    \lim_{i} \int_{G}\hat b_{i}\bigl(\sgr, (\gamma\cdot
    \pi)\restr{\sgr})\bigr) \overline{g(\gamma)} \hat
    f(\gamma,\pi)\,d\lambda_{u}(\gamma)\\
    =\int_{G} \hat b(\sgr,(\gamma\cdot \pi)\restr{\sgr}))
    \overline{g(\gamma) } \hat f(\gamma,\pi) \,d\lambda_{u}(\gamma)
  \end{multline}
  by the dominated converge theorem.  On the other hand
  \begin{align}
    \label{eq:58}
    \lim_{i}\bip(\uL(b_{i})f|g)_{u} = \bip(\uL(b)f|g)_{u}.
  \end{align}
  It follows that $b\in\cs(\Sigma)$, then
  \begin{equation}
    \label{eq:31}
    \ipu(\uL(b)f|g)=\int_{G} \hat b\bigl(\sgr, (\gamma\cdot
    \pi) \restr{\sgr})\bigr) \overline{g(\gamma) } \hat 
    f(\gamma,\pi) \,d\lambda_{u}(\gamma)
  \end{equation}
  for all $f,g\in\cc(G)$.  Thus if \eqref{eq:25} holds, then
  $\hat b\in P^{*}(J)$.

  Suppose that $b\in\cs(\Sigma)$ is such that
  $\phi(\gamma)=\hat b\bigl(\sgr,(\gamma\cdot \pi)\restr{\sgr})\bigr)$
  is non-zero at $\gamma_{0}\in G_{u}$.  Then there is a $h\in \cc(G)$
  such that
  \begin{align}
    \label{eq:60}
    \phi(\gamma)h(\gamma)\ge0 \quad\text{if $\gamma\in G_{u}$}
  \end{align}
  and such that $\phi(\gamma_{0})h(\gamma_{0})=1$.  There is a
  $\zeta\in \cc(G(u))$ such that
  \begin{align}
    \label{eq:61}
    \int_{G(u)} \zeta(t)\pi(t)\,d\beta^{G(u)}(t)=1.
  \end{align}
  Then there is a $f\in \cc(G)$ such that $f(\gamma_{0}t)=\zeta(t)$
  for all $t\in G(u)$.  Thus $\hat f(\gamma_{0},\pi)=1$.  Since
  $\hat f(\gamma,\pi)$ is non-zero near $\gamma_{0}$, we can find
  $k\in \cc(G)$ such that
  \begin{align}
    \label{eq:62}
    k(\gamma)\hat f(\gamma,\pi)\ge0\quad\text{if $\gamma\in
    G_{u}$, and}
  \end{align}
  such that $k(\gamma_{0})\hat f(\gamma_{0},\pi)=1$.  Let
  $g(\gamma)=\overline{h(\gamma)k(\gamma)}$ and let $f$ be as above.
  Since $\beta^{G(u)}$ has full support, it follows that
  \begin{align}
    \label{eq:63}
    \bip(\uL(b)f|g)_{u}>0.
  \end{align}
  Consequently, $\hat b\notin P^{*}(J)$.  Since
  $( \sgr, (\gamma\cdot \pi)\restr{\sgr}) = (\sg(\gamma\cdot u),
  (\gamma\cdot \pi)\restr{\sg(\gamma\cdot u)})$, this completes the
  proof.
\end{proof}

The following corollary will be useful in the sequel.

\begin{cor}
  \label{cor-orbits} As above, let $J(u,\pi)=\ker\bigl(\irrind(u,\pi)\bigr)$.
  If $J(u_{n},\pi_{n})\to J(u,\pi)$ in $\primcsg$, then after passing
  to a subsequence and relabeling, there are $\gamma_{n}\in G$ such
  that $\gamma_{n}\cdot u_{n}\to u$.
\end{cor}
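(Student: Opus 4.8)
The plan is to apply the map $P^{*}$ of Corollary~\ref{cor-pstar-cts} for the simplest possible subgroup section, namely the trivial one $\sg(v)=\sset v$, which is equivariant by the remark following Definition~\ref{def-sub-section}. The payoff of this choice is that restricting any character to the trivial group produces the trivial character, so by Remark~\ref{rem-pstar} the ideal $P^{*}(J(u,\pi))$ does not depend on $\pi$ at all: it is the ideal $I\bigl(\overline{G\cdot(\sset u,1)}\bigr)$ of functions in $C_{0}(\Sigmah)$ vanishing on the orbit closure of $(\sset u,1)$. A direct computation with \eqref{eq:16} gives $\gamma\cdot(\sset u,1)=(\sset{r(\gamma)},1)$ for $\gamma\in G_{u}$, so writing $\iota\colon\go\to\Sigmah$ for the map $v\mapsto(\sset v,1)$ we have $G\cdot(\sset u,1)=\iota([u])$ and hence $P^{*}(J(u,\pi))=I(F)$ with $F=\overline{\iota([u])}$; likewise $P^{*}(J(u_{n},\pi_{n}))=I(F_{n})$ with $F_{n}=\overline{\iota([u_{n}])}$.

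With this in hand I would argue as follows. Since the Jacobson topology on $\primcsg$ is the restriction of the topology on $\I(\cs(G))$, the hypothesis $J(u_{n},\pi_{n})\to J(u,\pi)$ together with the continuity of $P^{*}$ yields $I(F_{n})\to I(F)$ in $\I(C_{0}(\Sigmah))$. Because $u\in[u]$ we have $\iota(u)=(\sset u,1)\in F$, so Lemma~\ref{lem-fix-8.38} applies with $x=\iota(u)$: after passing to a subsequence and relabeling there are $x_{n}\in F_{n}$ with $x_{n}\to\iota(u)$ in $\Sigmah$. Now $\Sigmah$ is second countable and Hausdorff, hence metrizable; fixing such a metric $d$ and using that each $x_{n}$ lies in the closure $F_{n}=\overline{\iota([u_{n}])}$, I would choose $v_{n}\in[u_{n}]$ with $d(\iota(v_{n}),x_{n})<1/n$, so that $\iota(v_{n})\to\iota(u)$. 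By Proposition~\ref{prop-mrw3.3} this forces $\sset{v_{n}}\to\sset u$ in $\so$, whence $v_{n}=p_{0}(\sset{v_{n}})\to p_{0}(\sset u)=u$ by continuity of $p_{0}$. Writing $v_{n}=\gamma_{n}\cdot u_{n}$ with $\gamma_{n}\in G$ then gives the desired $\gamma_{n}\cdot u_{n}\to u$.

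The one genuinely delicate point, and where I expect the argument to need care, is the passage from the conclusion of Lemma~\ref{lem-fix-8.38} back to honest orbit elements: that lemma only produces points $x_{n}$ in the \emph{closure} $\overline{\iota([u_{n}])}$, not in $\iota([u_{n}])$ itself, so one cannot read off group elements directly. The metric approximation above is exactly what repairs this, and it works cleanly because convergence of $\iota(v_{n})$ down to $\iota(u)$ is detected on the base by the continuous map $p_{0}$. Everything else is bookkeeping; the real content is front-loaded into the choice of the trivial subgroup section, which makes $P^{*}(J(u,\pi))$ depend only on the orbit $[u]$ and thereby converts the orbit statement we want into a statement about a single closed subset of $\Sigmah$.
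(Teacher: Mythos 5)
Your proposal is correct and follows essentially the same route as the paper's own proof: both use the trivial subgroup section $\sg(v)=\sset v$, note that $P^{*}(J(u,\pi))$ is then the ideal of functions vanishing on $\overline{G\cdot(\sset u,1)}$, apply Lemma~\ref{lem-fix-8.38} to the convergent ideals, use metrizability of $\Sigmah$ to replace closure points by honest orbit points, and finish with Proposition~\ref{prop-mrw3.3}. The only difference is that you spell out the last two steps (the $1/n$ approximation and the descent via $p_{0}$) which the paper compresses into ``Since $\Sigmah$ is a metric space, we can take $a_{n}\in G\cdot(\sset{u_{n}},1)$'' and ``the result now follows easily.''
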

\begin{proof}
  There is always a trivial (equivariant) subsgroup section
  $\sg:\go\to\so$ given by $\sg(u)=\sset u$.  Then $P^{*}(J(u,\pi))$
  is the ideal of functions in $C_{0}(\Sigmah)$ vanishing on
  $\overline{G\cdot (\sset u,1)}$ in $\Sigmah$.  Since
  $P^{*}(J(u_{n},\pi_{n}))\to P^{*}(J(u,\pi))$ in
  $\I(C_{0}(\Sigmah))$, we can use Lemma~\ref{lem-fix-8.38} to pass to
  a subsequence, relabel, and find
  $a_{n}\in \overline{G\cdot (\sset{u_{n}},1)}$ converging to
  $(\sset u,1)$ in $\Sigmah$.  Since $\Sigmah$ is a metric space, we
  can take $a_{n}\in G\cdot (\sset{u_{n}},1)$.  The result now follows
  easily from Proposition~\ref{prop-mrw3.3}.
\end{proof}

\begin{cor}\label{cor-orbit-closure}
  Suppose that
  $\ker\bigl(\Ind(u,\pi)\bigr)=\ker\bigl(\Ind(v,\sigma)\bigr)$ Then
  $\overline{[u]}=\overline{[v]}$.
\end{cor}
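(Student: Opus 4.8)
The plan is to read off $P^{*}(J(u,\pi))$ from Proposition~\ref{prop-pstar} using the \emph{trivial} equivariant subgroup section $\sg(u)=\sset u$ (the one whose value at $u$ is the trivial subgroup of $G(u)$). By Remark~\ref{rem-pstar}, with this choice $P^{*}(J(u,\pi))$ is precisely the ideal of functions in $C_{0}(\Sigmah)$ vanishing on the orbit closure $\overline{G\cdot(\sset u,1)}$, where $1$ is the trivial character of $\sset u$. The point I would emphasize is that this ideal is completely independent of $\pi$: it depends on $(u,\pi)$ only through the orbit of $u$. This is what makes the trivial section the right tool here, since the hypothesis only controls kernels and not the characters.

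First I would invoke the hypothesis $J(u,\pi)=J(v,\sigma)$ together with the fact that $P^{*}$ is a well-defined map on $\I(\cs(G))$ (Corollary~\ref{cor-pstar-cts}) to conclude $P^{*}(J(u,\pi))=P^{*}(J(v,\sigma))$. Since closed subsets of the locally compact Hausdorff space $\Sigmah$ correspond bijectively to ideals of $C_{0}(\Sigmah)$ of the vanishing-ideal form, equality of these two ideals forces equality of the associated closed sets, i.e.
\[
  \overline{G\cdot(\sset u,1)}=\overline{G\cdot(\sset v,1)}
\]
as closed subsets of $\Sigmah$.

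Next I would transport this equality down to $\go$ along the continuous projection $q\colon\Sigmah\to\go$, $q(H,\pi)=p_{0}(H)$. Unwinding the $G$-action \eqref{eq:16}, for $\gamma\in G_{u}$ one has $\gamma\cdot(\sset u,1)=(\sset{r(\gamma)},1)$, so $q\bigl(G\cdot(\sset u,1)\bigr)=[u]$. Continuity gives $q\bigl(\overline{G\cdot(\sset u,1)}\bigr)\subset\overline{[u]}$, while trivially $[u]\subset q\bigl(\overline{G\cdot(\sset u,1)}\bigr)$; taking closures sandwiches these into $\overline{q\bigl(\overline{G\cdot(\sset u,1)}\bigr)}=\overline{[u]}$, and symmetrically for $v$. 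Applying $q$ and then closure to both sides of the displayed equality yields $\overline{[u]}=\overline{[v]}$, as required.

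I expect the only genuine content to be the specialization of Proposition~\ref{prop-pstar} to the trivial section, i.e.\ verifying that it really erases all dependence on the characters and that $\gamma\cdot(\sset u,1)=(\sset{r(\gamma)},1)$; both follow by unwinding the definitions. The remaining projection argument is elementary point-set topology, so I do not anticipate any serious obstacle there.
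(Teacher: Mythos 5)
Your argument is correct, and it rests on the same core tool as the paper's: Proposition~\ref{prop-pstar} applied to the trivial equivariant subgroup section $\sg(u)=\sset u$, which identifies $P^{*}(J(u,\pi))$ with the vanishing ideal of $\overline{G\cdot(\sset u,1)}$ and visibly erases all dependence on the character. Where you diverge is in the endgame. The paper deduces the corollary from Corollary~\ref{cor-orbits}, which is a statement about convergent sequences of primitive ideals: it feeds the constant sequence $J(u,\pi)\to J(v,\sigma)$ into that corollary to extract $\gamma_{n}\in G$ with $\gamma_{n}\cdot u\to v$, hence $v\in\overline{[u]}$, and concludes by symmetry; the sequential extraction there relies on Lemma~\ref{lem-fix-8.38} and on $\Sigmah$ being metrizable. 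You instead exploit the fact that an actual \emph{equality} of ideals gives, via the bijection between closed subsets of $\Sigmah$ and their vanishing ideals, an actual equality of the closed sets $\overline{G\cdot(\sset u,1)}=\overline{G\cdot(\sset v,1)}$, and then push this down along the continuous projection $(H,\chi)\mapsto p_{0}(H)$, using the elementary sandwich $[u]\subset q\bigl(\overline{G\cdot(\sset u,1)}\bigr)\subset\overline{[u]}$. Your route is self-contained modulo Proposition~\ref{prop-pstar} and avoids the subsequence machinery altogether, which is arguably cleaner for this particular statement; the paper's route costs nothing extra since Corollary~\ref{cor-orbits} is needed elsewhere (e.g., in the proofs of Theorems~\ref{thm-jmp-dis} and~\ref{thm-baby}) and the present corollary falls out of it in one line. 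All the individual steps you flag as routine (the computation $\gamma\cdot(\sset u,1)=(\sset{r(\gamma)},1)$, continuity of the projection, and the closed-set/ideal correspondence) do check out.
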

\begin{proof}
  By Corollary~\ref{cor-orbits}, we may as well assume that there are
  $\gamma_{n}\in G$ such that $\gamma_{n}\cdot u\to v$.  Thus
  $v\in \overline{[u]}$.  The result follows by symmetry.
\end{proof}

\section{Continuous Isotropy}
\label{sec:continuous-isotropy}

In this section, we want to consider examples were the isotropy map
$u\mapsto G(u)$ continuous except at isolated points.
Specifically, we make the following definition.

\begin{definition}\label{def-jmp-dis}
  We say that $G$ has continuous isotropy except for jump
  discontinuities if $G$ has an equivariant subgroup section
  $\sg:\go\to\so$ such that the set $D=\set{u\in\go:\sg(u)\not=G(u)}$
  is discrete.
\end{definition}

\begin{remark}\label{rem-free-off-d}
  Let $G$ be such that $G(u)=\sset u$ for all $u$ off a discrete set
  $D\subset \go$.  Then $G$ has continuous isotropy except for jump
  discontinuities.  For a specific example of this, see
  Example~\ref{ex-sink}.  Other more subtle examples of jump
  discontinuities are given in
  Examples~\ref{ex-aidan} and~\ref{ex-combo}.
\end{remark}

\begin{thm}
  \label{thm-jmp-dis} Suppose that $G$ is a second countable locally
  compact Hausdorff groupoid with a Haar system and abelian isotropy.
  Suppose that the isotropy is continuous except for jump
  discontinuities.  If $G$ is amenable, then
  $(u,\pi)\mapsto \psind(u,\pi):=\ker\bigl(\irrind(u,\pi)\bigr)$ is an
  open surjection inducing a homeomorphism $\psindq$ of $\stabggt$
  onto $\primcsg$.  If $\gugo$ is $T_{0}$, then $G$ is amenable and
  $\cs(G)$ is GCR.  Furthermore, in that case,
  $(u,\pi)\mapsto \irrind(u,\pi)$ induces a homeomorphism $\irrindq$
  of $\stabgg$ onto $\cs(G)^{\wedge}$.
\end{thm}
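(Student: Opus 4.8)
The plan is to fix the equivariant subgroup section $\sg$ supplied by the hypothesis, with discrete jump set $D=\set{u\in\go:\sg(u)\not=G(u)}$, and to reduce the homeomorphism claim to two properties of $\psind$: that it is open, and that $\psindq$ is injective. Continuity of $\psind$ and $\psindq$ is Proposition~\ref{prop-main-cts}, and surjectivity of $\psind$ (hence of $\psindq$) in the amenable case is the last assertion of that proposition. Since $k\colon\stabg\to\stabggt$ is a continuous open surjection by Lemma~\ref{lem-k-open} and $\psind=\psindq\circ k$, openness of $\psind$ forces $\psindq$ to be open; together with continuity, injectivity, and surjectivity this yields the homeomorphism. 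The main tool throughout is the equivariant map $\Phi\colon\stabg\to\Sigmah$, $\Phi(u,\pi)=(\sgu,\pi\restr{\sgu})$, together with the continuous map $P^{*}\colon\I(\cs(G))\to\I(C_{0}(\Sigmah))$ of Corollary~\ref{cor-pstar-cts}: by Remark~\ref{rem-pstar}, $P^{*}(J(u,\pi))$ is the ideal of functions vanishing on the orbit closure $\overline{G\cdot\Phi(u,\pi)}$ in $\Sigmah$, and $\Phi(\gamma\cdot(u,\pi))=\gamma\cdot\Phi(u,\pi)$.

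For openness, suppose $J_{n}\to J(u,\pi)$ in $\primcsg$. Using surjectivity write $J_{n}=\psind(v_{n},\sigma_{n})$, so $P^{*}(J_{n})$ is the vanishing ideal of $\overline{G\cdot\Phi(v_{n},\sigma_{n})}$. Continuity of $P^{*}$ gives $P^{*}(J_{n})\to P^{*}(J(u,\pi))$, so Lemma~\ref{lem-fix-8.38}, applied to the point $\Phi(u,\pi)$ of the limiting closed set, produces a subsequence and points of $\overline{G\cdot\Phi(v_{n},\sigma_{n})}$ converging to $\Phi(u,\pi)$; since $\Sigmah$ is metrizable these may be taken in the dense orbit $G\cdot\Phi(v_{n},\sigma_{n})=\Phi\bigl(G\cdot(v_{n},\sigma_{n})\bigr)$, yielding representatives $(u_{n},\pi_{n})\in G\cdot(v_{n},\sigma_{n})$ with $\Phi(u_{n},\pi_{n})\to\Phi(u,\pi)$ and (automatically) $u_{n}\to u$, while $\psind(u_{n},\pi_{n})=J_{n}$. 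The decisive observation is that, by Proposition~\ref{prop-conv-ts}, convergence in $(\stabg,\ts)$ only constrains the characters on the \emph{limiting} subgroup $H=\lim G(u_{n})$. Whenever the $u_{n}$ lie off $D$ one has $G(u_{n})=\sg(u_{n})\to\sg(u)$, so $H=\sg(u)$ and the Gelfand convergence $\Phi(u_{n},\pi_{n})\to(\sgu,\pi\restr{\sgu})$ is \emph{exactly} the hypothesis of Proposition~\ref{prop-conv-ts}; hence $(u_{n},\pi_{n})\to(u,\pi)$ in $\stabg$. Thus $J_{n}$ lies in the image of every neighborhood of $(u,\pi)$, and $\psind$ is open (that a subsequence suffices is the usual first-countable characterization of openness, available since $\stabg$ and $\primcsg$ are second countable).

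Injectivity of $\psindq$ runs along the same lines. If $J(u,\pi)=J(v,\sigma)$ then $P^{*}$ gives $\overline{G\cdot\Phi(u,\pi)}=\overline{G\cdot\Phi(v,\sigma)}$ in $\Sigmah$, so $\Phi(v,\sigma)$ is a limit of $G\cdot\Phi(u,\pi)$; equivariance of $\Phi$ and properness of $p_{0}$ produce $(u_{i},\pi_{i})\in G\cdot(u,\pi)$ with $u_{i}\to v$ and $\Phi(u_{i},\pi_{i})\to\Phi(v,\sigma)$. When $v\notin D$, or whenever the $u_{i}$ may be chosen off $D$, Proposition~\ref{prop-conv-ts} with $H=\sg(v)$ gives $(u_{i},\pi_{i})\to(v,\sigma)$, so $(v,\sigma)\in\overline{G\cdot(u,\pi)}$; by symmetry the two orbit closures in $\stabg$ coincide and $\psindq$ is injective. \textbf{I expect the residual case} $v\in D$, in which the approximants are forced to sit at the jump point, \textbf{to be the main obstacle}: there $\Phi$ records only $\sigma\restr{\sg(v)}$, and one must recover the behaviour of the character on the strictly larger group $G(v)$ from the representation $\indsg\bigl(G(v),\sigma\bigr)$ itself. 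This is exactly where discreteness of $D$ is used: each jump point is isolated in $\go$, so the fiber over it can be separated from the continuously varying part and analyzed directly by induction in stages (Lemmas~\ref{lem-basic-tools} and~\ref{lem-pre-cont}), and the fact that $\ts$ tests convergence only on limiting subgroups shows that the $\stabggt$-class of $(v,\sigma)$ is pinned down by the data already available. I anticipate that this jump-point bookkeeping, rather than any single hard estimate, is the technical heart of the argument.

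Finally, the $T_{0}$/GCR statement follows formally. If $\gugo$ is $T_{0}$, then Lemma~\ref{lem-to-amen-gcr} gives that $G$ is amenable and $\cs(G)$ is GCR, so $\cs(G)^{\wedge}$ and $\primcsg$ are homeomorphic by \cite{ped:cs-algebras}*{Theorem~6.1.5}; moreover $\stabgg$ is already $T_{0}$ by Corollary~\ref{cor-main-cts}, whence $\stabgg=\stabggt$. Corollary~\ref{cor-main-cts} furnishes the continuous bijection $\irrindq$ of $\stabgg$ onto $\cs(G)^{\wedge}$, and the openness of $\psind$ established above (equivalently, of $\irrind$ under the identification $\cs(G)^{\wedge}\cong\primcsg$), combined with Lemma~\ref{lem-k-open}, shows that $\irrindq$ is open. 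A continuous open bijection is a homeomorphism, which completes the proof.
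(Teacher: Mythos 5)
Your overall strategy---reducing to openness of $\psind$ plus injectivity of $\psindq$, and driving both with $P^{*}$, Proposition~\ref{prop-pstar}/Remark~\ref{rem-pstar}, Lemma~\ref{lem-fix-8.38}, and the convergence criterion of Proposition~\ref{prop-conv-ts}---is exactly the paper's, and your handling of the cases where the relevant points lie off $D$ is correct. But the jump-point cases, which you explicitly defer as ``the technical heart,'' are a genuine gap: they are not dispatched by Lemmas~\ref{lem-basic-tools} and~\ref{lem-pre-cont} as you suggest, and the map $\Phi(u,\pi)=(\sg(u),\pi\restr{\sg(u)})$ genuinely loses information at $u\in D$ (it only sees $\pi\restr{\sg(u)}$, not $\pi$), so no argument routed through $P^{*}$ alone can close them. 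For injectivity with $u\in D$, the paper first proves (Lemma~\ref{lem-closed-orbits}) that $D$ is $G$-invariant and $[u]$ is \emph{closed}, so Corollary~\ref{cor-orbit-closure} forces $v\in[u]$ and one may take $v=u$; then, because $G\restr{[u]}$ is equivalent to the abelian group $G(u)$, the algebra $\cs(G\restr{[u]})$ is GCR, equal kernels give equivalent representations, and the Rieffel correspondence yields $\pi=\sigma$. This groupoid-equivalence step is indispensable and absent from your sketch. Also, your assertion that ``each jump point is isolated in $\go$'' is false: $D$ is discrete in itself, not in $\go$ (in Example~\ref{ex-aidan}, $x_{\infty}\in D$ is a limit of points off $D$), so you cannot separate the fiber over a jump point by an open subset of $\go$.

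The same omission infects your openness argument: you only treat sequences $J_{n}=\psind(v_{n},\sigma_{n})$ whose representatives can be taken off $D$. Since $D$ is $G$-invariant, if $v_{n}\in D$ then \emph{every} orbit representative lies in $D$, and convergence of $\Phi(u_{n},\pi_{n})$ in $\Sigmah$ does not give convergence of $(u_{n},\pi_{n})$ in $\stabg$. The paper's resolution uses Corollary~\ref{cor-orbits} (obtained from the \emph{trivial} subgroup section $\sg(u)=\sset{u}$, another device missing from your write-up) to arrange $\gamma_{n}\cdot u_{n}\to u$; discreteness of $D$ then forces $u_{n}=u$ eventually after translating; and the equivalence of $G\restr{[u]}$ with $G(u)$ converts convergence of the induced representations into $\pi_{n}\to\pi$ in $G(u)^{\wedge}$, whence $(u,\pi_{n})\to(u,\pi)$ in $\stabg$. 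Your final paragraph on the $T_{0}$/GCR statement is fine.
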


\begin{remark}
  [Goehle] \label{rem-goehle} If the isotropy map $u\mapsto G(u)$ is
  continuous, then Theorem~\ref{thm-jmp-dis} applies.  This allows us
  to recover Goehle's \cite{goe:rmjm12}*{Theorem~3.5} describing
  $\primcsg$ as a special case.  At the same time we can give a
  cleaner description of the topology of $\stabggt$ using results from
  Section~\ref{sec:stabg}.  Specifically, consider
  Lemma~\ref{lem-nice-top} and Remark~\ref{rem-nice-top}.
\end{remark}

For the proof, it will be useful to note that the equivariance of the
subgroup section easily implies the following and we omit the proof.

\begin{lemma}
  \label{lem-closed-orbits} If $G$ has continuous isotropy except for
  jump discontinuities and if $\sg:\go \to \so$ is an equivariant
  subgroup section such that $D=\set{u\in\go:\sg(u)\not=G(u)}$ is
  discrete, then $D$ is $G$-invariant and $[u]$ is closed for all
  $u\in D$.
\end{lemma}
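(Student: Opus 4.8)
The plan is to prove the two assertions separately, since they draw on different facts. For the $G$-invariance of $D$, I would fix $u\in D$ and $\gamma\in G$ with $s(\gamma)=u$. Equivariance of $\sg$ gives $\sg(\gamma\cdot u)=\gamma\cdot\sg(u)$, while the identity $\gamma\cdot G(s(\gamma))=G(r(\gamma))$ recalled just before Corollary~\ref{cor-act-stab} gives $G(\gamma\cdot u)=\gamma\cdot G(u)$. The action of $\gamma$ on subgroups is conjugation $H\mapsto \gamma H\gamma^{-1}$, which is a bijection between the closed subgroups of $G(u)$ and those of $G(\gamma\cdot u)$; in particular it preserves the strict inclusion $\sg(u)\subsetneq G(u)$ that holds because $u\in D$. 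Hence $\gamma\cdot\sg(u)\neq\gamma\cdot G(u)$, that is $\sg(\gamma\cdot u)\neq G(\gamma\cdot u)$, so $\gamma\cdot u\in D$. As $\gamma$ was arbitrary this shows $[u]\subseteq D$, and in particular $D$ is $G$-invariant.

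For the closedness of $[u]$ when $u\in D$, I would combine the inclusion $[u]\subseteq D$ just established with discreteness of $D$. The point to extract from the hypothesis is that $D$ has no accumulation points in $\go$, so that $D$ is closed and each of its points is isolated. Then $\overline{[u]}\subseteq\overline D=D$. Given $v\in\overline{[u]}$, isolatedness furnishes an open set $U\subseteq\go$ with $U\cap D=\sset v$; since $[u]\subseteq D$ this forces $U\cap[u]\subseteq\sset v$, while $v\in\overline{[u]}$ makes $U\cap[u]$ nonempty. Therefore $v\in[u]$, which gives $\overline{[u]}=[u]$ and completes the argument.

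The only genuinely delicate point is the correct reading of ``discrete.'' Requiring merely that $D$ be discrete in its subspace topology would not suffice: one can arrange a single orbit to thread through infinitely many jump points of $D$ that accumulate at a point of continuity $v\notin D$, in which case $[u]$ fails to be closed even though each point of $D$ is isolated. What rescues the argument is precisely that the jump set have \emph{no} accumulation points in $\go$ (equivalently, that $D$ be closed and discrete), which is the intended meaning of ``at discrete points'' in the introduction; with that reading in hand, everything else is a formal consequence of equivariance and the conjugation action on the isotropy groups, which is why the proof can safely be omitted.
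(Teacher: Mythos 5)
Your argument is correct and is exactly the ``easy'' proof the paper omits: equivariance plus the fact that $\gamma\cdot H=\gamma H\gamma^{-1}$ is a bijection of subgroups gives $[u]\subseteq D$ for $u\in D$, and then closed-and-discreteness of $D$ forces $\overline{[u]}=[u]$. Your observation that the second assertion genuinely requires $D$ to have no accumulation points in $\go$ (not merely to be discrete in its subspace topology) is a legitimate and worthwhile point of rigor, and that stronger reading is consistent with the paper's intent and with all of its examples, where $D$ is finite.
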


\begin{proof}[Proof of Theorem~\ref{thm-jmp-dis}]
  Let $\sg:\go\to \so$ be a subgroup section such that
  $D=\set{u\in\go:\sg(u)\not=G(u)}$ is discrete.  Let
  $P^{*}:\I(\cs(G))\to \I(C_{0}(\Sigmah))$ be the corresponding
  continuous map from Corollary~\ref{cor-pstar-cts}.
  
  By Proposition~\ref{prop-main-cts}, we know that $\psind$ and
  $\psindq$ are continuous surjections.

  Suppose that $\psind(u,\pi)=\psind(v,\sigma)$.  Then $[v]$ and $[u]$
  have the same closure by Corollary~\ref{cor-orbit-closure}.  Since
  $u\in D$ implies $[u]$ is closed, $u\in D$ implies that $v\in [u]$.
  Thus we may as well assume $v=u$.  Since $G\restr{[u]}$ and $G(u)$
  are equivalent groupoids and since $G(u)$ is abelian,
  $\cs(G\restr{[u]})$ is GCR. Since $\Ind(u,\pi)$ and $\Ind(u,\sigma)$
  factor through $\cs(G\restr{[u]})$ and have the same kernel, it
  follows that $\Ind_{G(u)}^{G\restr{[u]}}\pi$ and
  $\Ind_{G(u)}^{G\restr{[u]}}\sigma$ are equivalent
  \cite{ped:cs-algebras}*{Theorem~6.1.5}.  Since $G\restr{[u]}$ is
  equivalent to $G(u)$, $\pi=\sigma$.  Thus $G\cdot (u,\pi)$ and
  $G\cdot (v,\sigma)$ are equal.  In particular, $(u,\pi)$ and
  $(v,\sigma)$ define the same class in $\stabggt$.

  On the other hand, if $u\notin D$, then we can use
  Proposition~\ref{prop-pstar} and Remark~\ref{rem-pstar} to conclude
  that $P^{*}(\psind(u,\pi))$ is the ideal of functions vanishing on
  $\overline{G\cdot (G(u),\pi)}\subset \Sigmah$.  Thus if
  $\psind(u,\pi)=\psind(v,\sigma)$ we also have $v\notin D$ and
  $\overline{G\cdot (u,\pi)}=\overline{G\cdot (v,\sigma)}$.  Thus
  $(u,\pi)$ and $(v,\sigma)$ define the same class in $\stabggt$ in
  this case as well.  This shows that $\psindq$ is injective.

  In view of Lemma~\ref{lem-k-open}, to see that $\psindq$ is a
  homeomorphism, it will suffice to see that $\psind$ is open. Let $V$
  be a neighborhood of $(u,\pi)$ in $\stabg$.  Suppose that
  $\psind(u_{n},\pi_{n})\to \psind(u,\pi)$. Since this sequence is
  arbitrary, it will suffice to see that $\psind(u_{n},\pi_{n})$ has a
  subsequence which is eventually in $\psind(V)$.

  Suppose that $u\notin D$.  Then we may as well assume that
  $u_{n}\notin D$ for all $n$. However, Corollary~\ref{cor-pstar-cts}
  implies that $P^{*}(\kappa(u_{n},\pi_{n}))\to P^{*}(\kappa(u,\pi))$
  in $\I(C_{0}(\Sigmah))$.  By Lemma~\ref{lem-fix-8.38}, we can pass
  to a subsequence, relabel, and assume that there there are
  $a_{n}\in \overline{G\cdot (G(u_{n}),\pi_{n})}$ converging to
  $(G(u),\pi)$ in $\Sigmah$.  Since $\Sigmah$ is a metric space, there
  are $\gamma_{n}\in G$ such that
  $\gamma_{n}\cdot (G(u_{n}),\pi_{n})\to (G(u),\pi)$ in $\Sigmah$.  Of
  course, we also have $\gamma_{n}\cdot (u_{n},\pi_{n})\to (u,\pi)$ in
  $\stabg$ and $\gamma_{n}\cdot (u_{n},\pi_{n})$ is eventually in $V$.
  Since
  $\psind\bigl(\gamma_{n}\cdot
  (u_{n},\pi_{n})\bigr)=\psind(u_{n},\pi_{n})$ by
  Lemma~\ref{lem-basic-tools}(b), we have shown that
  $\psind(u_{n},\pi_{n})$ is eventually in $\psind(V)$ as required.

  Now assume $u\in D$.  If there are infinitely many $u_{n}$ such that
  $u_{n }\notin D$, then we can pass to a subsequence, relabel, and
  assume that $u_{n}\notin D$ for all $n$.  Just as above,
  $P^{*}(\kappa(u_{n},\pi_{n}))\to P^{*}(\kappa(u,\pi))$ in
  $\I(C_{0}(\Sigmah))$ except now $P^{*}(\kappa(u,\pi))$ is the ideal
  of functions vanishing on
  $\overline{G\cdot (\sg(u),\pi\restr{\sg(u)})}$.  Then we can pass to
  a subsequence, relabel, and assume that there are $\gamma_{n}\in G$
  such that
  $\gamma_{n }\cdot (G(u_{n }),\pi_{n})\to (\sg(u),\pi\restr{\sg(u)})$
  in $\Sigmah$.  But then
  $\gamma_{n}\cdot (u_{n },\pi_{n})\to (u,\pi)$ in $\stabg$ by
  Lemma~\ref{lem-pre-cont}.  Then we have
  $\psind(u_{n},\pi_{n})=\psind(\gamma_{n}\cdot (u_{n},\pi_{n}))$
  eventually in $\psind(V)$.

  This leaves the case where there are infinitely many $u_{n}\in D$.
  Then we can reduce to the case where $u_{n}\in D$ for all $n$.  By
  Corollary~\ref{cor-orbits}, we can assume that there are
  $\gamma_{n}$ such that $\gamma_{n}\cdot u_{n}\to u$.  After
  replacing $(u_{n},\pi_{n})$ with $\gamma_{n}\cdot (u_{n},\pi_{n})$,
  we can assume $u_{n}\ \to u$.  Since $D$ is discrete, we can pass to
  a subsequence, relabel, and assume $u_{n}=u$ for all $n$.  But then
  we can assume
  $\Ind_{G(u)}^{G\restr{[u]}}\pi_{n} \to
  \Ind_{G(u)}^{G\restr{{[u]}}}\pi$ in $\cs(G\restr{[u]})^{\wedge}$.
  Since $G\restr{[u]}$ is equivalent to $G(u)$ this implies
  $\pi_{n}\to \pi$ in $G(u)^{\wedge}$.  Then
  $(u_{n},\pi_{n})=(u,\pi_{n})\to (u,\pi)$ in $\stabg$.

  This completes the proof that $\psind$ is open.

  If $\gugo$ is $T_{0}$, the rest follows from
  Corollary~\ref{cor-k-cts}.
\end{proof}

If the isotropy varies continuously, then we can sharpen our
description of the topology on $\stabg$.

\begin{lemma}\label{lem-nice-top}
  Let $\Sigmac=\set{(G(u),\pi)\in\Sigmah:u\in\go}$.  Then $\Sigmac$ is
  a closed subset of $\Sigmah$ and $(G(u),\pi)\mapsto (u,\pi)$ is a
  homeomorphism of $\Sigmac$ onto $\stabg$.  In particular, a sequence
  $\bigl((u_{n},\pi_{n})\bigr)$ converges to $(u,\pi)$ in $\stabg$ if
  and only if $u_{n}\to u$ and given $a_{n}\in G(u_{n})$ converging to
  $a\in G(u)$ we have $\pi_{n}(a_{n})\to \pi(a)$.
\end{lemma}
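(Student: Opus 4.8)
The plan is to recognize $\Sigmac$ as the image of $\stabg$ under the identification $(u,\pi)\mapsto(G(u),\pi)$ and to show that this identification $\Phi\colon\Sigmac\to\stabg$ is a homeomorphism by playing off the two sequential convergence criteria already in hand: Proposition~\ref{prop-mrw3.3} for the Gelfand topology on $\Sigmah$ and Proposition~\ref{prop-conv-ts} for $\ts$. The map is injective since $p_{0}(G(u))=u$ recovers $u$, and it is surjective onto $\Sigmac$ by definition. Both $\Sigmah$ and $(\stabg,\ts)$ are first countable---the former because it is the spectrum of a separable commutative $\cs$-algebra and hence metrizable, the latter by Lemma~\ref{lem-ts-sec-count}---so in each space continuity of a map and closedness of a subset may be tested with sequences, and $\Phi$ will be a homeomorphism as soon as both it and its inverse preserve convergent sequences.

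The crux is that continuity of the isotropy map collapses the subtle subsequence condition in Proposition~\ref{prop-conv-ts}. Indeed, whenever $u_{n}\to u_{0}$ in $\go$, continuity of $u\mapsto G(u)$ forces $G(u_{n})\to G(u_{0})$ in $\so$, so the group $H$ appearing in part~(b) is necessarily $G(u_{0})$ and, as $\pi_{0}\in G(u_{0})^{\wedge}$, part~(c) reduces to $(G(u_{n}),\pi_{n})\to(G(u_{0}),\pi_{0})$ in $\Sigmah$. With this in place I would argue both directions. If $(G(u_{n}),\pi_{n})\to(G(u_{0}),\pi_{0})$ in $\Sigmah$, then $G(u_{n})\to G(u_{0})$ by Proposition~\ref{prop-mrw3.3}, whence $u_{n}=p_{0}(G(u_{n}))\to p_{0}(G(u_{0}))=u_{0}$ by continuity of $p_{0}$; thus (a), (b), and (c) hold for the full sequence, and Proposition~\ref{prop-conv-ts} yields $(u_{n},\pi_{n})\to(u_{0},\pi_{0})$ in $\stabg$, so $\Phi$ is continuous. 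Conversely, if $(u_{n},\pi_{n})\to(u_{0},\pi_{0})$ in $\stabg$, then by Proposition~\ref{prop-conv-ts} every subsequence admits a further subsequence satisfying (a)--(c), which by the observation above amounts to convergence of $(G(u_{n}),\pi_{n})$ to $(G(u_{0}),\pi_{0})$ in $\Sigmah$ along that subsequence; since in any topological space a sequence converges to a point as soon as each of its subsequences has a further subsequence converging to that point, we obtain $(G(u_{n}),\pi_{n})\to(G(u_{0}),\pi_{0})$ in $\Sigmah$, so $\Phi^{-1}$ is continuous as well.

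For closedness I would take a sequence in $\Sigmac$ converging to some $(H_{0},\pi_{0})\in\Sigmah$. Writing its terms as $(G(u_{n}),\pi_{n})$, Proposition~\ref{prop-mrw3.3} gives $G(u_{n})\to H_{0}$ in $\so$, hence $u_{n}=p_{0}(G(u_{n}))\to p_{0}(H_{0})=:u_{0}$; continuity of the isotropy map then yields $G(u_{n})\to G(u_{0})$, and since $\so$ is Hausdorff we conclude $H_{0}=G(u_{0})$, so that $(H_{0},\pi_{0})\in\Sigmac$.

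Finally, the advertised sequential characterization follows at once: by the homeomorphism $\Phi$, convergence $(u_{n},\pi_{n})\to(u,\pi)$ in $\stabg$ is the same as $(G(u_{n}),\pi_{n})\to(G(u),\pi)$ in $\Sigmah$, which Proposition~\ref{prop-mrw3.3} unpacks as $G(u_{n})\to G(u)$ together with $\pi_{n}(a_{n})\to\pi(a)$ whenever $a_{n}\in G(u_{n})$ and $a_{n}\to a$; and $G(u_{n})\to G(u)$ is equivalent to $u_{n}\to u$, using continuity of $p_{0}$ in one direction and continuity of the isotropy map in the other. I do not expect a serious obstacle here: the only real care lies in the bookkeeping with first countability so that these sequential arguments genuinely pin down the topologies, and the entire point is that continuous isotropy removes exactly the pathology that made the general topology on $\stabg$ delicate.
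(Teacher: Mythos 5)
Your proof is correct; the paper states Lemma~\ref{lem-nice-top} without proof, evidently regarding it as a routine consequence of Propositions~\ref{prop-mrw3.3} and~\ref{prop-conv-ts} under the standing hypothesis of continuous isotropy, and your argument is precisely that intended deduction. In particular, you correctly isolate the key point that continuity of $u\mapsto G(u)$ forces the auxiliary subgroup $H$ in Proposition~\ref{prop-conv-ts}(b) to equal $G(u_{0})$, which collapses the subsequence criterion to ordinary convergence in $\Sigmah$, and your appeals to first countability (of $\stabg$ via Lemma~\ref{lem-ts-sec-count} and of the metrizable space $\Sigmah$) legitimize the purely sequential bookkeeping.
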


\begin{remark}
  \label{rem-nice-top}
  In the continuously varying isotropy case in particular, the
  openness of the map of $\Sigmac$ onto $\stabggt$, and the proof of
  Theorem~\ref{thm-jmp-dis}, give us a rather robust description of
  the topology on $\primcsg$.  If $[u,\pi]$ is the class of $(u,\pi)$
  in $\stabggt$, then we note that a sequence
  $\bigl([u_{n},\pi_{n}]\bigr)$ converges to $[u,\pi]$ if and only if
  every subsequence has a subsequence
  $\bigl([u_{n_{k}},\pi_{n_{k}}]\bigr)$ such there are
  $\gamma_{k}\in G$ such that
  $\gamma_{k}\cdot \bigl(G(u_{n_{k}}),\pi_{n_{k}}\bigr)$ converges to
  $(G(u),\pi)$ in $\Sigmac$.
\end{remark}

\section{Action Groupoids for Abelian Groups}
\label{sec:abelian-group-action}

We pause to see what the known results for action groupoids imply
about our more general constructions.  Recall that if $(\grg,X)$ is a
second countable locally compact transformation group, then the
corresponding action groupoid, or transformation groupoid, is
$G=G(\grg,X)=\set{(x,g,y)\in X\times \grg\times X:x=g\cdot y}$
\cite{wil:toolkit}*{Example~1.12}.  Then $\cs(G)$ can be identified
with the transformation group \cs-algebra $C_{0}(X)\rtimes_{\lt}\grg$
\cite{wil:toolkit}*{Example~1.55}.  If $\grg$ is abelian, then we can
use \cite{wil:tams81}*{Theorem~5.3} to describe $\primcsg$ and its
topology.  However this description is very different than what we
propose here.  Nevertheless, we can use this earlier work to verify our map
$\psindq$ from Section~\ref{sec:primcsg} is a homeomorphism in this
case.

More generally, provided that $\grg$ is amenable or that
$\grg\backslash X$ is $T_{0}$, we can also deal with the case where
there is an abelian subgroup $\grh$ of $\grg$ such that each stability
subgroup, $S_{x}=\set{g\in\grg:g\cdot x=x}$, is contained in $\grh$.
Note that $G=G(\grg,X)$ is amenable if $\grg$ is, or if
$G\backslash X=\grg\backslash X$ is $T_{0}$.  Furthermore, replacing
$\grh$ with $\bigcap_{g\in \grg} g\grh g^{-1}$ allows us to assume
that $\grh$ is normal in $\grg$.  Hence we can apply
\cite{wil:tams81}*{Theorem~5.8} and its corollaries.

It will be helpful to translate the results in \cite{wil:tams81} to
our setting.  As above, we write $S_{x}$ for the stability subgroup
$\set{h\in\grg:h\cdot x=x}$ in $\grg$ to distinguish it from the
isotropy group $G(x)=\set{(x,h,x)\in G:h\in S_{x}}$ in $G$.  However,
if $\pi\in \hat S_{x}$, then we will use the same symbol for the
corresponding character on $G(x)$.  It is not hard to see that the
induced representation $\Ind_{(x,S_{x})}^{\grg}(\pi)$ of the
transformation group \cs-algebra, defined in
\cite{wil:tams81}*{Definition~3.4}, when viewed as a representation of
$\cs(G)$, is equivalent to our $\Ind_{G(x)}^{G}(\pi)$.  Thus if
$(x,\rho)\in X\times\hath$ and if we define
$\psi(x,\rho)=
\ker\bigl(\Ind_{(x,S_{x})}^{\grg}(\rho\restr{S_{x}})\bigr)$ as in
\cite{wil:tams81}, then $\psi(x,\rho)=\psind(x,\rho\restr{G(x)})$.

We let $\Lambda_{\grh}$ be the quotient of $X\times \hath$ were we
identify $(x,\rho)$ and $(y,\sigma)$ if
$\overline{\grh\cdot x} = \overline{\grh\cdot y}$ and $\sigma$ and
$\rho$ agree on $S_{x}$ (which is necessarily the same as $S_{y}$
since $\overline{\grh\cdot x} = \overline{\grh\cdot y}$ and $\grh$ is
abelian).  We let $[x,\rho]$ be the class of $(x,\rho)$ in
$\Lambda_{\grh}$.  As discussed following
\cite{wil:tams81}*{Corollary~5.9}, there is a continuous $\grg$-action
on $\Lambda_{\grh}$ given by
$g\cdot [x,\rho]=[g\cdot x, g\cdot \rho]$.

\begin{prop}[\cite{wil:tams81}*{\S5}] \label{prop-81-results} Let
  $\grg$, $\grh$, and $\Lambda_{\grh}$ be as above.
  \begin{enumerate}
  \item The natural map of $X\times\hath$ onto $\Lambda_{\grh}$ is
    open.
  \item The natural map
    $\alpha:X\times\hath\to (\grg\backslash \Lambda_{\grh})^{\sim}$ is
    an open map.
  \item $\alpha(x,\rho)\mapsto \psind(x,\rho\restr{G(x)})$ is a
    homemorphism of $(\grg\backslash \Lambda_{\grh})^{\sim}$ onto
    $\primcsg$.
  \item If $\alpha(x,\rho)=\alpha(y,\sigma)$, then there are
    $g_{n}\in\grg$ and $\rho_{n}\in \hath$ such that
    $(g_{n}\cdot x,\rho_{n})\to (y,\sigma)$ in $X\times\hath$ and such
    that $\rho_{n}$ and $g_{n}\cdot \rho$ agree on $S_{g_{n}\cdot x}$.
  \end{enumerate}
\end{prop}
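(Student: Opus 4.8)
The plan is to read the entire proposition off the results of \cite{wil:tams81}*{\S5} through the dictionary set up above, under which $\psi(x,\rho)=\psind(x,\rho\restr{G(x)})$ and $\Lambda_{\grh}$ is exactly the space studied there. The first point to record is that the composite $(x,\rho)\mapsto\psind(x,\rho\restr{G(x)})$ is $\grg$-invariant and constant on the fibres of the quotient map $X\times\hath\to\Lambda_{\grh}$: the map $(x,\rho)\mapsto(x,\rho\restr{G(x)})$ of $X\times\hath$ into $\stabg$ is $\grg$-equivariant, and $\psind$ is constant on $G$-orbits by Lemma~\ref{lem-basic-tools}(a). This is what permits the descent to $(\grg\backslash\Lambda_{\grh})^{\sim}$ in parts~(b) and~(c).

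Part~(a), the openness of the quotient map $p\colon X\times\hath\to\Lambda_{\grh}$, I would quote directly from \cite{wil:tams81}*{\S5}. Granting~(a), part~(b) is then immediate: the map $\alpha$ factors as $\alpha=k\circ p$, where $k\colon\Lambda_{\grh}\to(\grg\backslash\Lambda_{\grh})^{\sim}$ is the natural map associated to the $\grg$-space $\Lambda_{\grh}$, and Lemma~\ref{lem-k-open} shows $k$ is continuous and open. Hence $\alpha$ is open as a composition of open maps.

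For part~(c), I would invoke \cite{wil:tams81}*{Theorem~5.8} together with its corollaries, which say that $\psi$ descends to a homeomorphism of $(\grg\backslash\Lambda_{\grh})^{\sim}$ onto $\primcsg$ equipped with its Jacobson topology. Under the identification $\psi(x,\rho)=\psind(x,\rho\restr{G(x)})$, the descended map is precisely $\alpha(x,\rho)\mapsto\psind(x,\rho\restr{G(x)})$, which is~(c). Here the only verifications are that the $\grg$-action on $\Lambda_{\grh}$ used in \cite{wil:tams81} is compatible with the factorization through the $T_{0}$-ization and that the two topologies on $\primcsg$ agree; both are contained in \cite{wil:tams81}*{Theorem~5.8}.

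For part~(d) I would argue topologically. By the definition of the $T_{0}$-ization, $\alpha(x,\rho)=\alpha(y,\sigma)$ forces $[y,\sigma]\in\overline{\grg\cdot[x,\rho]}$ in $\Lambda_{\grh}$, so second countability yields a sequence with $g_{n}\cdot[x,\rho]\to[y,\sigma]$. Using the openness from~(a), I would lift a subsequence along $p$ to get $(z_{k},\tau_{k})\to(y,\sigma)$ in $X\times\hath$ with $[z_{k},\tau_{k}]=[g_{n_{k}}\cdot x,g_{n_{k}}\cdot\rho]$. Since this forces $z_{k}\in\overline{\grh\cdot(g_{n_{k}}\cdot x)}$, I can choose $h_{k}\in\grh$ with $h_{k}g_{n_{k}}\cdot x\to y$; setting $g_{k}'=h_{k}g_{n_{k}}$ and $\rho_{k}=\tau_{k}$, the facts that $\grh$ is abelian and contains every stabilizer give $S_{g_{k}'\cdot x}=S_{g_{n_{k}}\cdot x}$ and $(g_{k}'\cdot\rho)\restr{S_{g_{k}'\cdot x}}=(g_{n_{k}}\cdot\rho)\restr{S_{g_{n_{k}}\cdot x}}=\tau_{k}\restr{S_{g_{n_{k}}\cdot x}}$, which is the required agreement of $\rho_{k}$ and $g_{k}'\cdot\rho$. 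The main obstacle is exactly this bookkeeping in~(d): the identification defining $\Lambda_{\grh}$ only controls the \emph{$\grh$-orbit closure} of the first coordinate, whereas~(d) demands a literal $\grg$-translate $g_{n}\cdot x$ in that slot, and it is the normality and commutativity of $\grh$ that let one replace $z_{k}$ by $g_{k}'\cdot x$ without disturbing the character condition.
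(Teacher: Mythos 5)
Your proposal is correct and follows essentially the same route as the paper: parts (a)--(c) are quoted from \cite{wil:tams81}*{\S5}, and part (d) is proved by lifting a convergent sequence of $\grg$-translates through the open map of (a) and then using that $\grh$ is abelian and contains all stabilizers to replace $z_{k}$ by an honest translate $h_{k}g_{n_{k}}\cdot x$ without changing the stabilizer or the restricted character. The only cosmetic difference is that for (b) you invoke Lemma~\ref{lem-k-open} where the paper cites \cite{wil:tams81}*{Corollary~5.9}; both give the openness of $\Lambda_{\grh}\to(\grg\backslash\Lambda_{\grh})^{\sim}$, so the arguments coincide.
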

\begin{proof}
  (a) This follows from \cite{wil:tams81}*{Theorem~5.3} and the
  discussion following its proof.
  
  (b) As in the first paragraph of the proof of
  \cite{wil:tams81}*{Corllary~5.11}, this follows from
  \cite{wil:tams81}*{Corollary~5.9} and part~(a).

  (c) This is \cite{wil:tams81}*{Corollary~5.10}.

  (d) If $\alpha(x,\rho)=\alpha(y,\sigma)$, then there are
  $g_{n}\in\grg$ such that $[g_{n}\cdot x,g_{n}\cdot \rho]$ converges
  to $[y,\sigma]$ in $\Lambda_{H}$.  Since the natural map of
  $X\times \hath$ onto $\Lambda_{H}$ is open by part~(a), we can pass
  to a subseqence, relabel, and assume that there are
  $(z_{n},\rho_{n})\in X\times \hath$ such that
  $(z_{n},\rho_{n})\to (y,\sigma)$ in $X\times\hath$,
  $\overline{H\cdot z_{n}}=\overline{H\cdot (g_{n}\cdot x)}$, and
  $g_{n} \cdot \rho$ and $ \rho_{n}$ agree on
  $S_{z_{n}}=S_{g_{n}\cdot x}$.  Since $X$ is a metric space, there
  are $h_{n}\in \grh$ such that $h_{n}g_{n}\cdot x\to y$.  Since
  $\grh$ is abelian and all the stability groups are contained in
  $\grh$, $h_{n}g_{n}\cdot \rho=g_{n}\cdot \rho$ and
  $S_{h_{n}g_{n}\cdot x}=S_{g_{n}\cdot x}$. Thus
  $(h_{n}g_{n}\cdot x,\rho_{n})\to (y,\sigma)$ in $X\times \hat \grh$
  and $h_{n}g_{n}\cdot \rho$ and $\rho_{n}$ agree on
  $S_{h_{n}g_{n}\cdot x}$.
\end{proof}

\begin{lemma}
  \label{lem-cts-stab} Suppose that
  $(x_{n},\rho_{n})\to (x_{0},\rho_{0})$ in $X\times\hath$.  Then
  $(x_{n},\rho_{n}\restr{S_{x_{n}}})\to (x,\rho_{0}\restr{S_{n}})$ in
  $\stabg$.
\end{lemma}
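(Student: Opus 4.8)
The plan is to verify the sequential criterion for convergence in $\stabg$ furnished by Proposition~\ref{prop-conv-ts}. Writing $\pi_{n}=\rho_{n}\restr{S_{x_{n}}}$ and $\pi_{0}=\rho_{0}\restr{S_{x_{0}}}$, viewed as characters on $G(x_{n})$ and $G(x_{0})$ respectively, I must show that every subsequence of $\bigl((x_{n},\pi_{n})\bigr)$ has a further subsequence satisfying conditions (a), (b), and~(c) of that proposition with respect to $(x_{0},\pi_{0})$. Condition~(a), namely $x_{n}\to x_{0}$ in $\go=X$, is immediate: it is just the first-coordinate projection of the hypothesis $(x_{n},\rho_{n})\to(x_{0},\rho_{0})$ in $X\times\hath$.

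For condition~(b) I would use that $p_{0}\colon\so\to\go$ is proper. Since the set $\set{x_{n}:n\ge1}\cup\sset{x_{0}}$ is compact and each $G(x_{n})$ lies in its $p_{0}$-preimage, the isotropy groups $G(x_{n})$ all lie in a fixed compact subset of $\so$. Hence any subsequence admits a further subsequence along which $G(x_{n})\to H$ for some $H\in\so$; by continuity of $p_{0}$ we get $p_{0}(H)=x_{0}$, so $H\subset G(x_{0})$. This gives~(b), and it is here---not in the evaluation step---that the passage to a subsequence is genuinely needed, since the stabilizer map $x\mapsto S_{x}$ need not be continuous in this setting.

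Condition~(c) asks that $\bigl(G(x_{n}),\pi_{n}\bigr)\to (H,\pi_{0}\restr H)$ in $\Sigmah$ along the chosen subsequence, which I would establish via Proposition~\ref{prop-mrw3.3}. Its first clause is exactly the convergence $G(x_{n})\to H$ already arranged. For the second clause, suppose $a_{n}\in G(x_{n})$ with $a_{n}\to a_{0}$; write $a_{n}=(x_{n},h_{n},x_{n})$ with $h_{n}\in S_{x_{n}}\subset\grh$ and $a_{0}=(x_{0},h_{0},x_{0})\in H$. Then $\pi_{n}(a_{n})=\rho_{n}(h_{n})$, and since $H\subset G(x_{0})$ identifies with a subgroup of $S_{x_{0}}\subset\grh$, we have $\pi_{0}\restr H(a_{0})=\rho_{0}(h_{0})$. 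The convergence $a_{n}\to a_{0}$ forces $h_{n}\to h_{0}$ in $\grh$, and by hypothesis $\rho_{n}\to\rho_{0}$ in $\hath$.

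The key point---and the only nontrivial ingredient---is the joint continuity of the evaluation pairing $\hath\times\grh\to\T$, $(\rho,h)\mapsto\rho(h)$, which holds because $\grh$ is a locally compact abelian group and $\hath$ carries the compact-open topology. This yields $\rho_{n}(h_{n})\to\rho_{0}(h_{0})$, that is, $\pi_{n}(a_{n})\to\pi_{0}\restr H(a_{0})$, so Proposition~\ref{prop-mrw3.3} gives~(c). With (a), (b), and (c) in hand, Proposition~\ref{prop-conv-ts} yields $(x_{n},\pi_{n})\to(x_{0},\pi_{0})$ in $\stabg$, as required.
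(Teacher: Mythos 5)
Your proof is correct and follows essentially the same route as the paper's: pass to a subsequence along which the stabilizers converge to some $H\subset G(x_{0})$ (which the paper asserts and you justify via properness of $p_{0}$), then verify convergence in $\Sigmah$ and invoke Proposition~\ref{prop-conv-ts}. You merely supply the detail the paper leaves implicit, namely that the second clause of Proposition~\ref{prop-mrw3.3} follows from the joint continuity of the Pontryagin pairing $\hath\times\grh\to\T$.
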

\begin{proof}
  For convenience, let $\pi_{n}=\rho_{n}\restr{S_{x_{n}}}$.  We
  clearly have $x_{n}\to x$.  If we have passed to a subsequence and
  relabeled, we can pass to another so that
  $S_{n_{k}}\to S \subset S_{x_{0}}$.  But then
  $(H(x_{n_{k}}), \pi_{n_{k}})\to (S,\pi_{0}\restr S)$ in $\Sigmah$.
  So the result follows from Proposition~\ref{prop-conv-ts}.
\end{proof}

\begin{prop}
  \label{prop-thesis-case} Suppose that $G=G(\grg,X)$ is an amenable
  second countable action groupoid such that the stability groups are
  all contained in a fixed abelian subgroup of $\grg$.  If $\psindq$
  is as in Proposition~\ref{prop-main-cts}, then
  $\psindq:\stabggt\to \primcsg $ is a homeomorphism.
\end{prop}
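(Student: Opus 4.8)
The plan is to compare $\psindq$ with the homeomorphism supplied by Proposition~\ref{prop-81-results}(c). Write $Y=(\grg\backslash\Lambda_{\grh})^{\sim}$ and let $\Theta\colon Y\to\primcsg$ be the homeomorphism $\Theta(\alpha(x,\rho))=\psind(x,\rho\restr{G(x)})$ of part~(c). From Proposition~\ref{prop-main-cts} we already know that $\psindq\colon\stabggt\to\primcsg$ is continuous and, since $G$ is amenable, surjective. The strategy is to build a continuous surjection $\Phi\colon Y\to\stabggt$ satisfying $\psindq\circ\Phi=\Theta$; once this is in hand, the conclusion that $\psindq$ is a homeomorphism is purely formal.

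To construct $\Phi$, let $r\colon X\times\hath\to\stabg$ be $r(x,\rho)=(x,\rho\restr{S_{x}})$, which is continuous by Lemma~\ref{lem-cts-stab}, and let $k\colon\stabg\to\stabggt$ be the natural map, which is continuous and open by Lemma~\ref{lem-k-open}. Put $q=k\circ r$. Since restriction of characters of the abelian group $\grh$ to the closed subgroup $S_{x}$ is surjective, every $\pi\in S_{x}^{\wedge}$ is of the form $\rho\restr{S_{x}}$, so $q$ is surjective. I claim $q$ factors through $\alpha$. Suppose $\alpha(x,\rho)=\alpha(y,\sigma)$. By Proposition~\ref{prop-81-results}(d) there are $g_{n}\in\grg$ and $\rho_{n}\in\hath$ with $(g_{n}\cdot x,\rho_{n})\to(y,\sigma)$ in $X\times\hath$ and with $\rho_{n}$ agreeing with $g_{n}\cdot\rho$ on $S_{g_{n}\cdot x}$. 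Then $\rho_{n}\restr{S_{g_{n}\cdot x}}=(g_{n}\cdot\rho)\restr{S_{g_{n}\cdot x}}=g_{n}\cdot(\rho\restr{S_{x}})$, so $r(g_{n}\cdot x,\rho_{n})=g_{n}\cdot r(x,\rho)$ under the $G$-action on $\stabg$, while Lemma~\ref{lem-cts-stab} gives $r(g_{n}\cdot x,\rho_{n})\to r(y,\sigma)$ in $\stabg$. Hence $r(y,\sigma)\in\overline{G\cdot r(x,\rho)}$, and applying the same argument with the roles reversed yields the opposite inclusion, so the two orbit closures coincide and $q(x,\rho)=q(y,\sigma)$. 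Since $\alpha$ is a continuous open surjection by Proposition~\ref{prop-81-results}(b), it is a quotient map, so $q$ descends to a continuous map $\Phi\colon Y\to\stabggt$ with $\Phi\circ\alpha=q$; in particular $\Phi$ is surjective.

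The triangle commutes, for $\psindq(\Phi(\alpha(x,\rho)))=\psindq(q(x,\rho))=\psind(x,\rho\restr{S_{x}})=\Theta(\alpha(x,\rho))$, whence $\psindq\circ\Phi=\Theta$ as $\alpha$ is onto. Now $\Theta$ is a homeomorphism, $\Phi$ is a continuous surjection, and $\psindq$ is continuous. Surjectivity of $\Theta$ forces $\psindq$ onto; and if $\psindq(s)=\psindq(s')$, choosing $y,y'\in Y$ with $\Phi(y)=s$ and $\Phi(y')=s'$ gives $\Theta(y)=\Theta(y')$, so $y=y'$ and $s=s'$, proving $\psindq$ injective. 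Finally, if $U\subset\stabggt$ is open, then $\Phi^{-1}(U)$ is open in $Y$ and $\psindq(U)=\psindq(\Phi(\Phi^{-1}(U)))=\Theta(\Phi^{-1}(U))$, which is open because $\Phi$ is onto (so $\Phi(\Phi^{-1}(U))=U$) and $\Theta$ is open. Thus $\psindq$ is open, and a continuous open bijection is a homeomorphism.

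The only substantial step is the factoring of $q$ through $\alpha$: it rests on matching the character restrictions produced by Proposition~\ref{prop-81-results}(d) with the $G$-action on $\stabg$ and feeding the resulting convergence into Lemma~\ref{lem-cts-stab}. I expect the bookkeeping there—verifying $\rho_{n}\restr{S_{g_{n}\cdot x}}=g_{n}\cdot(\rho\restr{S_{x}})$ and hence that $r(g_{n}\cdot x,\rho_{n})$ is the translate $g_{n}\cdot r(x,\rho)$—to be the main point, after which the passage from the commuting triangle to the homeomorphism is routine point-set topology.
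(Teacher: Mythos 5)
Your proposal is correct, and it uses the same essential ingredients as the paper --- Proposition~\ref{prop-81-results}(b)--(d), Lemma~\ref{lem-cts-stab}, the extension of characters from $S_{x}$ to $\hath$, and the identification $\rho_{n}\restr{S_{g_{n}\cdot x}}=(g_{n}\cdot\rho)\restr{S_{g_{n}\cdot x}}=g_{n}\cdot(\rho\restr{S_{x}})$ --- but it organizes them differently. The paper proves openness of $\psind$ and injectivity of $\psindq$ as two separate direct arguments: openness by taking an arbitrary convergent sequence $\psind(x_{n},\pi_{n})\to\psind(x_{0},\pi_{0})$, lifting it through the open map $\alpha$ of Proposition~\ref{prop-81-results}(b), and producing translates $g_{n}\cdot(x_{n},\rho_{n}\restr{S_{x_{n}}})$ that land in a given neighborhood; and injectivity by the same computation you use to show that $q$ is constant on the fibers of $\alpha$. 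You instead package everything into the single factorization $\Theta=\psindq\circ\Phi$ through $(\grg\backslash\Lambda_{\grh})^{\sim}$ and then extract bijectivity and openness by purely formal point-set topology, which eliminates the sequential openness argument entirely; the price is that all the analytic content is concentrated in the well-definedness of $\Phi$, which is word-for-word the paper's injectivity step. Two small housekeeping points you should make explicit, as the paper does in its opening paragraph: Proposition~\ref{prop-81-results} is stated for $\grh$ abelian and \emph{normal} (one replaces $\grh$ by $\bigcap_{g}g\grh g^{-1}$), and its applicability rests on EH-regularity of $C_{0}(X)\rtimes_{\lt}\grg$, which follows from amenability. Also, when you pass from $r(y,\sigma)\in\overline{G\cdot r(x,\rho)}$ to equality of orbit closures, you are implicitly using that the closure of a $\grg$-invariant subset of $\stabg$ is $\grg$-invariant; this is immediate here because the action is by the group $\grg$ and is continuous by Corollary~\ref{cor-act-stab}, but it deserves a sentence.
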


\begin{proof}
  Since $G$ is second countable and amenable, it is not hard to see
  that $C_{0}(X)\rtimes_{\lt}\grg$ is EH-regular (see
  \cite{ionwil:iumj09}).  Of course, since $\grh$ is abelian,
  $C_{0}(X)\rtimes_{\lt}\grh$ is necessarily EH-regular.  Furthermore,
  we may assume that $\grh$ is normal and apply the results from
  \cite{wil:tams81} via Proposition~\ref{prop-81-results}.

  To see that $\psind$ is open, we proceed as in the proof of
  Theorem~\ref{thm-jmp-dis}.  Suppose that $V$ is a neighborhood of
  $(x_{0},\pi_{0})$ in $\stabg$ and that
  $\psind(x_{n},\pi_{n})\to \psind(x_{0},\pi_{0})$.  Since we started
  with an arbitrary sequence, it will suffice to show that
  $\psind(x_{n},\pi_{n})$ has a subsequence which is eventually in
  $\psind(V)$.

  Since every character on $S_{x_{n}}$ is the restriction of some
  $\rho\in\hath$, we can replace $\pi_{n}$ with
  $\rho_{n}\restr{G(x_{n})}$ for some $\rho_{n}\in\hat\grh$.  Let
  $\alpha:X\times \hat \grh\to (\grg\backslash \Delta)^{\sim}$ be the
  natural map.  Then in view of Proposition~\ref{prop-81-results}(c),
  we can assume that
  $\alpha(x_{n},\rho_{n})\to \alpha(x_{0},\rho_{0})$.  Since $\alpha$
  is open, we can pass to a subsequence, and relabel, so that there is
  a sequence $(y_{n},\sigma_{n})\to (x_{0},\rho_{0})$ in
  $X\times\hat \grh$ with
  $\alpha(y_{n},\sigma_{n})=\alpha(x_{n},\rho_{n})$.  Since
  $X\times\grh$ is a metric space, we can use
  Proposition~\ref{prop-81-results}(d) to find $g_{n}\in \grg$ such
  that $ (g_{n}\cdot x_{n},\rho_{n}') \to (x_{0},\rho_{0})$ and such
  that for all $n$, $\rho_{n}'$ and $g_{n}\cdot \rho_{n}$ agree on
  $S_{g\cdot x_{n}}$.  It follows from Lemma~\ref{lem-cts-stab} that
  $g_{n}\cdot (x_{n},\rho_{n}\restr{S_{x_{n}}})\to
  (x,\rho_{0}\restr{S_{x}})=(x,\pi_{0})$ in $\stabg$.  Then
  $g_{n}\cdot (x_{n},\rho_{n}\restr{S_{x_{n}}})$ is eventually in $V$.
  Since $\psind$ is $\grg$-invariant, $\psind(x_{n},\rho_{n})$ is
  eventually in $\psind(V)$.  Thus, $\psind$ is open.

  To see that $\psindq$ is injective, suppose that
  $\psind(x,\rho\restr{S_{x}})=\psind(y,\sigma\restr{S_{y}})$.  Then
  $\alpha(x,\rho)=\alpha(y,\sigma)$ and we can use
  Proposition~\ref{prop-81-results}(d) to find $g_{n}$ such that
  $(g_{n}\cdot x,\rho_{n})\to (y,\sigma)$ in $X\times\grh$ such that
  $\rho_{n}$ and $g_{n}\cdot \rho$ agree on $S_{g_{n}\cdot x}$.  But
  then $g_{n}\cdot (x,\rho\restr{S_{n}})$ converges to
  $(y,\sigma\restr{S_{y}})$ in $\stabg$ by Lemma~\ref{lem-cts-stab}.
  Therefore
  $(y,\sigma\restr{S_{y}} \in \overline{G\cdot
    (x,\rho\restr{S_{x}})}$.  Since the situation is symmetric, we are
  done.
\end{proof}

\section{Generalized Proper Groupoids}
\label{sec:gener-prop-group}

Let $\pi:G\to \go\times\go$ be given by
$\pi(\gamma)=(r(\gamma),s(\gamma))$.  The image, $\rg=\pi(G)$, is an
equivalence relation on $\go$ and a topological groupoid when $\rg$ is
given the relative product topology.\footnote{It is more common to
  equip $\rg$ with the often finer quotient topology.  However, it is
  the relative product topology that is suitable for our needs here.}  Of
course, in general, $\rg$ may not be locally compact.  In fact, $\rg$
is closed in $\go\times\go$ if and only if $\gugo$ is Hausdorff
(\cite{wil:toolkit}*{Ex 2.1.10}).  Part of our interest in $\pi$ is
due to the following straightforward observation.

\begin{lemma}\label{lem-pi-acts}
  If $G$ has abelian isotropy and $H\in \so$, then $\gamma\cdot H$
  depends only on $\pi(\gamma)$.
\end{lemma}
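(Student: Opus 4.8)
The plan is to reduce everything to the single observation that two arrows with the same range and source differ by an element of an isotropy group, and then to invoke commutativity of that isotropy group. Recall that the action in question is by conjugation, $\gamma\cdot H=\gamma H\gamma^{-1}$, as recorded implicitly in Lemma~\ref{lem-3.25} and in the formula $\gamma\cdot\pi(a)=\pi(\gamma^{-1}a\gamma)$ preceding Lemma~\ref{lem-act-sigma}; it is defined precisely when $p_{0}(H)=s(\gamma)$.

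First I would take $\gamma,\gamma'\in G$ with $\pi(\gamma)=\pi(\gamma')$, so that $r(\gamma)=r(\gamma')$ and $s(\gamma)=s(\gamma')$, and fix $H\in\so$ with $p_{0}(H)=s(\gamma)=s(\gamma')$ (otherwise neither $\gamma\cdot H$ nor $\gamma'\cdot H$ is defined and there is nothing to prove). The key step is to consider $\eta:=\gamma^{-1}\gamma'$. This product is composable because $s(\gamma^{-1})=r(\gamma)=r(\gamma')$, and one checks directly that $r(\eta)=s(\gamma)=s(\eta)$, so $\eta\in G(s(\gamma))$ lies in the isotropy group over $s(\gamma)$.

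Next I would use the standing hypothesis of abelian isotropy. Since $p_{0}(H)=s(\gamma)$, we have $H\subset G(s(\gamma))$, and both $\eta$ and every element $t\in H$ lie in the abelian group $G(s(\gamma))$. Hence $\eta t\eta^{-1}=t$ for all $t\in H$, which says exactly that $\eta\cdot H=H$. Finally, writing $\gamma'=\gamma\eta$ and using that the $G$-action on $\so$ is compatible with composition, I conclude
\[
  \gamma'\cdot H=(\gamma\eta)\cdot H=\gamma\cdot(\eta\cdot H)=\gamma\cdot H,
\]
which is the assertion.

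I do not expect any genuine obstacle here: the argument is a short computation, and the only points requiring care are purely bookkeeping, namely verifying that $\gamma^{-1}\gamma'$ is composable and that it lands in $G(s(\gamma))$, and recalling that the relevant action is conjugation. The entire content of the statement is that conjugation by an isotropy element is trivial when the isotropy is abelian, which is precisely what forces $\gamma\cdot H$ to depend only on the pair $(r(\gamma),s(\gamma))$.
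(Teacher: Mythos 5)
Your proof is correct, and it is exactly the argument the authors have in mind: the paper states this lemma as a ``straightforward observation'' and omits the proof entirely, so your write-up simply supplies the intended details. The key points --- that $\gamma^{-1}\gamma'$ lands in $G(s(\gamma))$, that conjugation by an element of an abelian isotropy group fixes any subgroup $H\subset G(s(\gamma))$ pointwise, and the compatibility $(\gamma\eta)\cdot H=\gamma\cdot(\eta\cdot H)$ --- are all verified correctly.
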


It follows we get a well-defined action of $\rg$ on $\so$:
$\pi(\gamma)\cdot H=\gamma\cdot H$.

\begin{lemma}
  \label{lem-pi-cts} Suppose that the action of $\rg$ on $\so$ is
  continuous and that $H_{n}\to H_{0}$ in $\so$.  Let
  $u_{n}=p_{0}(H_{n})$.  If $(\gamma_{n})$ is a sequence in $G$ 
  such that $\gamma_{n}\cdot u_{n}\to \gamma_{0}\cdot u_{0}$, then
  $\gamma_{n}\cdot H_{n}\to \gamma_{0}\cdot H_{0}$
\end{lemma}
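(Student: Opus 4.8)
The plan is to deduce the claim directly from the hypothesized continuity of the $\rg$-action on $\so$, once we check that the relevant elements of $\rg$ converge. The whole point of passing from the $G$-action to the $\rg$-action (via Lemma~\ref{lem-pi-acts}) is that we never need the sequence $(\gamma_{n})$ itself to converge in $G$---which we are not given---but only the pair $\pi(\gamma_{n})=(r(\gamma_{n}),s(\gamma_{n}))$ to converge in $\rg$.

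First I would unwind the notation. For $\gamma_{n}\cdot H_{n}$ to be defined we must have $s(\gamma_{n})=p_{0}(H_{n})=u_{n}$, and similarly $s(\gamma_{0})=u_{0}$; thus $\gamma_{n}\cdot u_{n}=r(\gamma_{n})$ and $\gamma_{0}\cdot u_{0}=r(\gamma_{0})$, so the hypothesis $\gamma_{n}\cdot u_{n}\to\gamma_{0}\cdot u_{0}$ is exactly the statement $r(\gamma_{n})\to r(\gamma_{0})$ in $\go$. Next, since $p_{0}$ is continuous and $H_{n}\to H_{0}$, I get $u_{n}=p_{0}(H_{n})\to p_{0}(H_{0})=u_{0}$ as well.

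Combining these two convergences gives $\pi(\gamma_{n})=(r(\gamma_{n}),u_{n})\to(r(\gamma_{0}),u_{0})=\pi(\gamma_{0})$ in $\go\times\go$. Because each $\pi(\gamma_{n})$ and the limit $\pi(\gamma_{0})$ lie in $\rg$, and $\rg$ carries the \emph{relative} product topology, this is already convergence in $\rg$. At that point I would simply invoke the assumed continuity of the $\rg$-action on $\so$, together with the identity $\gamma\cdot H=\pi(\gamma)\cdot H$ from Lemma~\ref{lem-pi-acts}, to conclude $\gamma_{n}\cdot H_{n}=\pi(\gamma_{n})\cdot H_{n}\to\pi(\gamma_{0})\cdot H_{0}=\gamma_{0}\cdot H_{0}$.

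I do not anticipate a genuine obstacle here: the lemma is essentially the observation that the $\rg$-action sees only $(r,s)$, so source-and-range convergence---rather than convergence of $\gamma_{n}$ in $G$---suffices. The only subtlety to get right is the bookkeeping with the relative product topology on $\rg$: one should confirm that convergence in $\go\times\go$ with limit inside $\rg$ yields convergence in $\rg$, which is immediate from the definition of the subspace topology. The value of the lemma, and the reason the $\rg$-formalism was set up, is precisely to replace the too-strong requirement ``$\gamma_{n}\to\gamma_{0}$ in $G$'' (which would be needed to apply Lemma~\ref{lem-act-sigma} directly) by the weaker and checkable ``$r(\gamma_{n})\to r(\gamma_{0})$''.
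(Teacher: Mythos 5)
Your argument is correct and is exactly the paper's proof: the paper's entire proof is the one-line remark that the hypotheses imply $\pi(\gamma_{n})\to\pi(\gamma_{0})$ in $\rg$, which you have simply spelled out (via $\gamma_{n}\cdot u_{n}=r(\gamma_{n})$, continuity of $p_{0}$, and the relative product topology) before invoking the assumed continuity of the $\rg$-action.
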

\begin{proof}
  The convoluted hypotheses just imply that
  $\pi(\gamma_{n})\to \pi(\gamma_{0})$ in $\rg$.
\end{proof}

Recall that a locally compact groupoid is a proper if it
acts properly on its unit space.  For example, the action groupoid
$G=G(\grg,X)$ is proper if and only if $\grg$ acts properly on $X$.
Note that if $G$ is a proper groupoid, then all of its isotropy groups
are compact.

\begin{lemma}\label{lem-proper-ok}
  Suppose that $G$ is a proper groupoid with abelian isotropy.  Then
  $\gugo$ is Hausdorff and $\rg$ acts continuously on~$\so$.
\end{lemma}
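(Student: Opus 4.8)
The plan is to prove the two assertions separately, disposing of the Hausdorffness of $\gugo$ first and then establishing continuity of the action, which is where the real work lies.

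For the Hausdorffness of $\gugo$, I would invoke the characterization recalled just before the lemma: $\gugo$ is Hausdorff if and only if $\rg$ is closed in $\go\times\go$ (\cite{wil:toolkit}*{Ex 2.1.10}). Since $G$ is proper, the map $\pi=(r,s):G\to\go\times\go$ is by definition proper. A continuous proper map into a locally compact Hausdorff space is closed, and $\go\times\go$ is locally compact Hausdorff, so in particular the image $\rg=\pi(G)$ is closed. Hausdorffness of $\gugo$ is then immediate.

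For the continuity of the $\rg$-action on $\so$, I would argue sequentially, which is legitimate since all the spaces in sight are second countable. The one delicate point, and the main obstacle, is that $\rg$ carries the \emph{relative} product topology rather than the quotient topology from $G$, so one cannot simply push forward the continuity of the underlying $G$-action on $\so$ (which holds by \cite{wil:toolkit}*{Lemma~3.22}, as already used in the proof of Lemma~\ref{lem-act-sigma}). Properness is precisely the tool that bridges this gap. Concretely, suppose $\pi(\gamma_n)\to\pi(\gamma_0)$ in $\rg$ and $H_n\to H_0$ in $\so$ with $p_{0}(H_n)=s(\gamma_n)$; I must show $\gamma_n\cdot H_n\to\gamma_0\cdot H_0$, and I note at the outset that by Lemma~\ref{lem-pi-acts} the choices of representatives $\gamma_n$ and $\gamma_0$ for the given elements of $\rg$ are immaterial.

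The key step is to lift the convergence in $\rg$ back up to $G$. Since $(r,s)(\gamma_n)=\pi(\gamma_n)\to\pi(\gamma_0)$, the points $(r,s)(\gamma_n)$ eventually lie in a fixed compact neighborhood $L$ of $\pi(\gamma_0)$, and properness forces $(\gamma_n)$ to lie eventually in the compact set $(r,s)^{-1}(L)$. To prove $\gamma_n\cdot H_n\to\gamma_0\cdot H_0$, it then suffices to show that every subsequence admits a further subsequence converging to $\gamma_0\cdot H_0$. Given a subsequence, compactness lets me extract a further subsequence with $\gamma_{n_k}\to\gamma'$ in $G$, and continuity of $(r,s)$ gives $\pi(\gamma')=\pi(\gamma_0)$. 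Continuity of the $G$-action on $\so$ then yields $\gamma_{n_k}\cdot H_{n_k}\to\gamma'\cdot H_0$, while Lemma~\ref{lem-pi-acts} identifies $\gamma'\cdot H_0=\pi(\gamma')\cdot H_0=\pi(\gamma_0)\cdot H_0=\gamma_0\cdot H_0$. Hence every subsequence has a further subsequence with the correct limit, so $\gamma_n\cdot H_n\to\gamma_0\cdot H_0$ and the action is continuous. The only genuine input is the properness-driven lifting; the remainder is routine bookkeeping.
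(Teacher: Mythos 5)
Your proposal is correct and follows essentially the same route as the paper: both arguments reduce to a subsequence-of-a-subsequence claim, use properness of $(r,s)$ to lift the convergence $\pi(\gamma_n)\to\pi(\gamma_0)$ to a convergent subsequence $\gamma_{n_k}\to\gamma'$ with $\pi(\gamma')=\pi(\gamma_0)$, and then combine continuity of the $G$-action on $\so$ with Lemma~\ref{lem-pi-acts}. The only difference is cosmetic: where you unwind properness by hand (closedness of proper maps for the Hausdorff claim, compactness of $(r,s)^{-1}(L)$ for the lift), the paper simply cites \cite{wil:toolkit}*{Proposition~2.18} and \cite{wil:toolkit}*{Proposition~2.17} for the same facts.
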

\begin{proof}
  If $G$ is proper, then $\gugo$ is Hausdorff
  \cite{wil:toolkit}*{Proposition~2.18}.  To see that the $\rg$-action
  is continuous, suppose that $H_{n}\to H_{0}$ in $\so$ with
  $u_{n}=p_{0}(H_{n})$.  Suppose
  $\pi(\gamma_{n})=(v_{n},u_{n})\to \pi(\gamma_{0})=(v_{0},u_{0})$.
  We need to verify that
  $\pi(\gamma_{n})\cdot H_{n}\to \pi(\gamma_{0})\cdot H_{0}$.  After
  passing to a random subsequence and relabeling, it will suffice to
  produce a subsequence with this property.  Since $u_{n}\to u_{0}$
  and $v_{n}=\gamma_{n}\cdot u_{n}\to v_{0}$, it follows from
  \cite{wil:toolkit}*{Proposition~2.17} that we can pass to a
  subsequence and assume $\gamma_{n}\to \eta_{0}$ with
  $\pi(\gamma_{0})=\pi(\eta_{0})$.  Since the action of $G$ on $\so$
  is continuous,
  $\gamma_{n}\cdot H_{n}\to \eta_{0}\cdot H_{0}=\gamma_{0}\cdot
  H_{0}$.  This completes the proof.
\end{proof}

Motivated in part by the previous lemma, we make the following definition.

\begin{definition}
  We say that $G$ is proper modulo its isotropy if $\gugo$ is
  Hausdorff and $\rg$ acts continuously on $\so$.
\end{definition}

To both justify our terminology, and to see that our next theorem has
wide applicability, we want to exhibit the following class of examples
based on the unit space fixing extensions studied extensively in
\cites{iksw:jot19,ikrsw:jfa20,ikrsw:xx21}. 

\begin{example}\label{ex-gen-proper}
  As usual, suppose that $G$ has abelian isotropy.  Suppose that $\G$
  is a proper groupoid and that there is a
  continuous open groupoid epimorphism $p:G\to\G$ that restricts to a
  homeomorphism of $\go$ with $\G^{(0)}$.  Then $\A=\ker
  p=\set{\gamma\in G:p(\gamma)\in\G^{(0)}}$ is contained in the
  isotropy subgroupoid $G'=\set{\gamma\in G:r(\gamma)=s(\gamma)}$.
  It follows that $\A$ is an abelian group bundle and 
  we have a diagram 
  \begin{equation}
    \label{eq:33}
    \begin{tikzcd}[column sep=3cm]
    \A \arrow[r,"\iota", hook] \arrow[dr,shift left, bend right = 15]
    \arrow[dr,shift right, bend right = 15]&G \arrow[r,"p", two
    heads] \arrow[d,shift left] \arrow[d,shift right]&\G
    \arrow[dl,shift left, bend left = 15] \arrow[dl,shift right, bend
    left = 15]
    \\
    &\go&
  \end{tikzcd}
\end{equation}
so that we can view $G$ as a unit space fixing extension of $\G$ by
$\A$.  Our next lemma confirms that such groupoids are always examples of
groupoids that are proper modulo their isotropy.
\end{example}

\begin{lemma}
  \label{lem-gen-proper}  Let $G$ be as in
  Example~\ref{ex-gen-proper}.  Then $G$ is proper modulo its isotropy.
\end{lemma}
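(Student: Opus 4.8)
The notion ``proper modulo its isotropy'' has two ingredients: that $\gugo$ be Hausdorff, and that $\rg$ act continuously on $\so$. I would dispatch these separately, handling the first quickly and spending the effort on the second, which I expect to mirror the proof of Lemma~\ref{lem-proper-ok} with the properness of $G$ replaced by the properness of $\G$ together with the openness of $p$.

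For Hausdorffness, the plan is to identify $\gugo$ with $\G\backslash\G^{(0)}$ through $p$. Since $p$ restricts to a homeomorphism of $\go$ onto $\G^{(0)}$ and is a surjective groupoid homomorphism, the $G$-orbit relation on $\go$ corresponds exactly, under this homeomorphism, to the $\G$-orbit relation on $\G^{(0)}$: if $v=r(\gamma)$ with $s(\gamma)=u$ then $p(\gamma)$ carries $p(u)$ to $p(v)$, and conversely any $g\in\G$ with $s_{\G}(g)=p(u)$ lifts (by surjectivity of $p$) to some $\gamma$ which, because $p$ is injective on units, must satisfy $s(\gamma)=u$. Thus $\gugo$ and $\G\backslash\G^{(0)}$ are quotients of homeomorphic spaces by identical equivalence relations and are therefore homeomorphic. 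As $\G$ is proper, $\G\backslash\G^{(0)}$ is Hausdorff by \cite{wil:toolkit}*{Proposition~2.18}, whence $\gugo$ is Hausdorff.

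For the $\rg$-action, recall the action is well defined by Lemma~\ref{lem-pi-acts}. I would suppose $H_{n}\to H_{0}$ in $\so$ with $u_{n}=p_{0}(H_{n})$ and $\pi(\gamma_{n})=(v_{n},u_{n})\to\pi(\gamma_{0})=(v_{0},u_{0})$ in $\rg$, and aim to show $\gamma_{n}\cdot H_{n}\to\gamma_{0}\cdot H_{0}$. Since the prospective limit depends only on $\pi(\gamma_{0})$ and $H_{0}$, it suffices to extract from any subsequence a further subsequence converging to $\gamma_{0}\cdot H_{0}$. Set $g_{n}=p(\gamma_{n})$. Then $r_{\G}(g_{n})=p(v_{n})\to p(v_{0})$ and $s_{\G}(g_{n})=p(u_{n})\to p(u_{0})$, so by \cite{wil:toolkit}*{Proposition~2.17} applied to the proper groupoid $\G$ I may pass to a subsequence with $g_{n}\to g_{*}$, where $r_{\G}(g_{*})=p(v_{0})$ and $s_{\G}(g_{*})=p(u_{0})$. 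Choosing any $\gamma_{*}\in G$ with $p(\gamma_{*})=g_{*}$, injectivity of $p$ on units forces $r(\gamma_{*})=v_{0}$ and $s(\gamma_{*})=u_{0}$, so $\pi(\gamma_{*})=\pi(\gamma_{0})$ and hence $\gamma_{*}\cdot H_{0}=\gamma_{0}\cdot H_{0}$ by Lemma~\ref{lem-pi-acts}. Now I would invoke the openness of $p$: by the standard net-lifting characterization of open continuous surjections, after passing to a further subsequence there are $\eta_{k}\in G$ with $p(\eta_{k})=g_{n_{k}}$ and $\eta_{k}\to\gamma_{*}$. Because $p(\eta_{k})=p(\gamma_{n_{k}})$ and $p$ is injective on units, $\pi(\eta_{k})=\pi(\gamma_{n_{k}})$, so $\eta_{k}\cdot H_{n_{k}}=\gamma_{n_{k}}\cdot H_{n_{k}}$ again by Lemma~\ref{lem-pi-acts}. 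Finally, continuity of the $G$-action on $\so$ (established earlier via \cite{wil:toolkit}*{Lemma~3.22}) gives $\eta_{k}\cdot H_{n_{k}}\to\gamma_{*}\cdot H_{0}=\gamma_{0}\cdot H_{0}$, which is the required convergence.

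The delicate point, and the one I expect to require the most care, is the interaction between the nontriviality of $\A=\ker p$ and the lifting step. Since $\A$ need not be trivial, neither $p(\gamma_{n})=p(\gamma_{0})$ nor $\gamma_{*}=\gamma_{0}$ holds in general; the argument stays honest only by routing every comparison through $\pi$ and applying Lemma~\ref{lem-pi-acts}, exploiting that $\gamma\cdot H$ depends on $\gamma$ only through $\pi(\gamma)$. I would take particular care to verify that the abstract open-map lifting lemma genuinely applies here and that the lifts $\eta_{k}$ carry the same $\pi$-image as the $\gamma_{n_{k}}$. Once that is secured, the fact that the limit $\gamma_{0}\cdot H_{0}$ is independent of the subsequences and of the choices of $g_{*}$ and $\gamma_{*}$ closes the subsequence argument and yields continuity of the $\rg$-action.
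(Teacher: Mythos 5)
Your proposal is correct and follows essentially the same route as the paper: identify $\gugo$ with $\G\backslash\G^{(0)}$ via $p$ to get Hausdorffness, then use properness of $\G$ to extract a convergent subsequence of $p(\gamma_{n})$, openness of $p$ to lift it to a convergent sequence $\eta_{k}$ in $G$ with $p(\eta_{k})=p(\gamma_{n_{k}})$, and Lemma~\ref{lem-pi-acts} together with continuity of the $G$-action on $\so$ to conclude. The only cosmetic difference is that the paper phrases the agreement $p(\eta_{k})=p(\gamma_{n_{k}})$ as $\eta_{k}=\gamma_{n_{k}}a_{k}$ with $a_{k}\in\A$, while you route it through $\pi(\eta_{k})=\pi(\gamma_{n_{k}})$; these are equivalent.
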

\begin{proof}
  We can use $p\restr\go$ to identify
  $\go$ and $\G^{(0)}$.   Then
  $r(\gamma)=r(p(\gamma))$ and $s(\gamma)=s(p(\gamma))$.
  Then we can identify $\gugo$ and
  $\G\backslash \go$.  Since $\G$ is proper,  it follows $\gugo$ is
  Hausdorff as in Lemma~\ref{lem-proper-ok}.

  Note that $p(\gamma)=p(\eta)$ if and only if there is a $a\in \A$
  such that $\eta=\gamma a$.

  To see that $\rg$ acts continuously, we make the appropriate
  modifications to the second part of the proof of
  Lemma~\ref{lem-proper-ok}.  Suppose that $H_{n}\to H_{0}$ in $\so$
  with $u_{n}=p_{0}(H_{n})$.  Suppose
  $\pi(\gamma_{n})=(v_{n},u_{n})\to \pi(\gamma_{0})=(v_{0},u_{0})$.
  We need to see that after passing to a subsequence and relabeling,
  $\pi(\gamma_{n})\cdot H_{n}\to \pi(\gamma_{0})\cdot H_{0}$.

  Since $\G$ is proper, we can assume that
  $p(\gamma_{n})\to p(\eta_{0})$ and
  $\pi_{\G}(p(\gamma_{0}))=\pi_{\G}(p(\eta_{0}))$.  Since $p$ is open,
  we can pass to subsequence, relabel, and assume that there are
  $\eta_{n}\to \eta_{0}$ in $G$ such that $p(\eta_{n})=p(\gamma_{n})$.
  In particular, there are $a_{n}\in \A$ such that
  $\gamma_{n}a_{n}\to \eta_{0}$.  Thus
  $\gamma_{n}\cdot H_{n}=(\gamma_{n}a_{n})\cdot H_{n}\to
  \eta_{0}H_{0}=\gamma_{0}\cdot H_{0}$.
\end{proof}

\begin{remark}
    Notice that for all $u\in\go$, $\A(u)$ is a subgroup of $G(u)$.
    Since $\G(u)=p(G(u))$, we must have $G(u)/\A(u)$ compact.
    While we can always form the groupoid $G'\backslash G$, it may not
    even be locally compact if the isotropy map is not continuous.  
  \end{remark}

\begin{thm}
  \label{thm-baby} Suppose that $G$ is a second countable locally
  compact Hausdorff groupoid with a Haar system and abelian isotropy.
  Suppose that $G$ is proper modulo its isotropy.  Then $\irrindq$ is
  a homeomorphism of $\stabgg$ onto $\cs(G)^{\wedge}$.
\end{thm}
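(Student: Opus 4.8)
The plan is to deduce the theorem from the machinery already in place, reducing everything to a single openness statement. Since $G$ is proper modulo its isotropy, $\gugo$ is Hausdorff and in particular $T_{0}$, so Lemma~\ref{lem-to-amen-gcr} shows $G$ is amenable and $\cs(G)$ is GCR. Hence $\cs(G)^{\wedge}$ and $\primcsg$ may be identified, and Corollary~\ref{cor-main-cts} already supplies a \emph{continuous bijection} $\irrindq\colon\stabgg\to\cs(G)^{\wedge}$. Because $\gugo$ is $T_{0}$ we have $\stabggt=\stabgg$, and the orbit map $q\colon\stabg\to\stabgg$ is open by Lemma~\ref{lem-k-open}. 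Since $\irrind=\irrindq\circ q$ with $q$ an open surjection, it suffices to prove that $\psind$ (equivalently $\irrind$) is an \emph{open} map; this is the only ingredient Corollary~\ref{cor-main-cts} does not already give.

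For openness I would argue sequentially, as in the proof of Theorem~\ref{thm-jmp-dis}. Fix a neighborhood $V$ of $(u,\pi)$ and suppose $\psind(u_{n},\pi_{n})\to\psind(u,\pi)$; it is enough to produce a subsequence and $\gamma_{n}\in G$ with $\gamma_{n}\cdot(u_{n},\pi_{n})\to(u,\pi)$ in $\stabg$, since then $\gamma_{n}\cdot(u_{n},\pi_{n})$ is eventually in $V$ and $\psind$ is constant on orbits. Corollary~\ref{cor-orbits} lets me pass to a subsequence and choose $\gamma_{n}$ with $\gamma_{n}\cdot u_{n}\to u$, so after replacing $(u_{n},\pi_{n})$ by $\gamma_{n}\cdot(u_{n},\pi_{n})$ I may assume $u_{n}\to u$; properness of $p_{0}$ then gives $G(u_{n})\to H\subset G(u)$ in $\so$ along a further subsequence. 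At the level of $\so$ the continuity of the $\rg$-action (Lemma~\ref{lem-pi-cts}), available precisely because $G$ is proper modulo its isotropy, controls how the $G(u_{n})$ move under groupoid elements, so the \emph{subgroup} part of the convergence is not the difficulty. Moreover, since $\gugo$ is Hausdorff the relation $\rg$ is closed, hence acts freely and properly on $\go$; this yields a local transversal $T\ni u$ meeting each nearby orbit once, and after one further translation I may assume $u_{n}\in T$.

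The heart of the matter, and the main obstacle, is the remaining \emph{character} convergence: I must show $(G(u_{n}),\pi_{n})\to(H,\pi\restr{H})$ in $\Sigmah$, for then Proposition~\ref{prop-conv-ts} gives $(u_{n},\pi_{n})\to(u,\pi)$ in $\stabg$ and the proof is finished. This is exactly the converse of Lemma~\ref{lem-pre-cont}, and it genuinely fails without the hypothesis $\psind(u_{n},\pi_{n})\to\psind(u,\pi)$: for the trivial bundle $\go\times\T$ one can take $u_{n}=u$, $G(u_{n})=\T$ constant, and $\pi_{n}=n\in\hat\T$, so that unit and subgroup convergence hold while $(G(u_{n}),\pi_{n})$ runs off to infinity in $\Sigmah$. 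The kernel convergence must therefore be fed back in. The trivial subgroup section only recovers the unit orbit via Proposition~\ref{prop-pstar} and Lemma~\ref{lem-fix-8.38} (this is just Corollary~\ref{cor-orbits}) and so cannot see $\pi$; and an \emph{equivariant} section equal to $G(u)$ on a transversal would, by equivariance, coincide with the full isotropy map and hence force it to be continuous, which we are not assuming. I would instead use $T$ to reduce to the abelian isotropy group bundle $G\restr{T}=\bigsqcup_{t\in T}G(t)$, whose $\cs$-algebra is commutative with spectrum a subset of $\Sigmah$ carrying exactly the topology coming from Proposition~\ref{prop-conv-ts}, and then transfer the convergence of the $\cs(G)$-kernels $\psind(u_{n},\pi_{n})$ into convergence of points in this group-bundle spectrum.

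Making that last reduction precise is the crux, and it is the groupoid analogue of the slice arguments of Echterhoff--Emerson \cite{echeme:em11} and Neumann \cite{neu:phd11}: properness (closedness of $\rg$ together with continuity of the $\rg$-action on $\so$) should provide a slice about $[u]$ on which $\cs(G)$ is locally Morita equivalent to the crossed product of $C_{0}(T)$ by the abelian group $G(u)$, so that near $\psind(u,\pi)$ the map $\psind$ is identified with a Gelfand transform and is visibly open. Once the character convergence is in hand, Proposition~\ref{prop-conv-ts} completes the openness of $\psind$; together with the first paragraph this makes $\irrindq$ a continuous open bijection, hence a homeomorphism of $\stabgg$ onto $\cs(G)^{\wedge}$. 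I expect the construction and bookkeeping of this slice/transversal reduction—arranging the local group-bundle structure despite the possibly discontinuous isotropy map—to be the most delicate part of the argument.
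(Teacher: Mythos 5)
Your reduction to openness of $\psind$, the use of Corollary~\ref{cor-orbits} to arrange $u_{n}\to u$ and $G(u_{n})\to H\subset G(u)$, and your identification of the crux---converting convergence of the kernels $\psind(u_{n},\pi_{n})$ into convergence of $(G(u_{n}),\pi_{n})$ to $(H,\pi\restr H)$ in $\Sigmah$---all match the paper. But the step you defer (``making that last reduction precise is the crux'') is exactly the missing step, and the mechanism you sketch for it does not work under the stated hypotheses. A closed (hence free and proper) equivalence relation $\rg$ on $\go$ need \emph{not} admit a local transversal meeting each nearby orbit exactly once: that is a slice-type statement, and it is precisely the Palais slice property / Abels' theorem that Echterhoff--Emerson and Neumann must assume or invoke, and that this theorem is designed to avoid. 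Even granting such a $T$, the proposed local Morita equivalence with $C_{0}(T)\rtimes G(u)$ has no foundation here, since there is no ambient group and the isotropy groups $G(u_{n})$ need not sit inside $G(u)$ in any coherent way.

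The paper's way around your (correct) observation---that a globally equivariant section with $\sg(u)=G(u)$ would force the isotropy map to be continuous---is to localize differently. After splitting off the easy case in which infinitely many $[u_{n}]$ coincide (handled by restricting to $G\restr{[u_{0}]}$, which is equivalent to the group $G(u_{0})$, so that kernel convergence gives $\pi_{n}\to\pi_{0}$ in $G(u_{0})^{\wedge}$ directly), one may assume the orbits $[u_{n}]$ are pairwise distinct. Then $F=G\cdot\set{u_{n}:n\ge0}$ is shown to be a \emph{closed} invariant set, and
$\sg(\gamma\cdot u_{n})=\gamma\cdot G(u_{n})$ for $n\ge1$, $\sg(\gamma\cdot u_{0})=\gamma\cdot H$, is a well-defined (Lemma~\ref{lem-pi-acts}) equivariant subgroup section on $F$; its continuity is exactly where the continuity of the $\rg$-action on $\so$ enters. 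The point is that $\sg(u_{0})=H$, not $G(u_{0})$, which evades your objection while still being equivariant and continuous on $F$. Replacing $G$ by $G\restr F$ and applying Proposition~\ref{prop-pstar} with this section, $P^{*}(\psind(u_{k},\pi_{k}))$ and $P^{*}(\psind(u_{0},\pi_{0}))$ become the vanishing ideals of the orbit closures of $(G(u_{k}),\pi_{k})$ and of $(H,\pi_{0}\restr H)$ in $\Sigmah$, so Lemma~\ref{lem-fix-8.38} produces translates $\gamma_{k}$ with $\gamma_{k}\cdot(G(u_{k}),\pi_{k})\to(H,\pi_{0}\restr H)$, and Proposition~\ref{prop-conv-ts} finishes. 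This ad hoc section on the closed saturated set $F$ is the idea your proposal lacks, so as written the argument has a genuine gap.
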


\begin{proof}[Proof of Theorem~\ref{thm-baby}]
  In view of Corollary~\ref{cor-main-cts}, it suffices to see that
  $\irrind$ is an open map.  Since $\cs(G)$ is GCR, we can identify
  $\cs(G)^{\wedge}$ and $\primcsg$, so it will suffice to see that
  $\psind$ (as defined in Proposition~\ref{prop-main-cts}) is open.
  As in the proof of Theorem~\ref{thm-jmp-dis}, it will suffice, given
  a sequence $\bigl(\psind(u_{n},\pi_{n})\bigr)$ converging to
  $\psind(u_{0},\pi_{0})$, to find a subsequence
  $\bigl((u_{n_{k}},\pi_{n_{k}})\bigr)$ and a sequence
  $\bigl((v_{k},\sigma_{k})\bigr)$ in $\stabg$ such that
  $(v_{k},\sigma_{k})\to (u_{0},\pi_{0})$ and
  $\psind(v_{k},\sigma_{k})=\psind(u_{n_{k}},\pi_{n_{k}})$.

  Using Corollary~\ref{cor-orbits}, we can pass to a subsequence,
  relabel, and assume that there are $\gamma_{n}\in G$ such that
  $\gamma_{n}\cdot u_{n}\to u_{0}$.  Since
  $\psind(\gamma_{n}\cdot u_{n},\gamma_{n}\cdot \pi_{n})
  =\psind(u_{n},\pi_{n})$, we can replace $(u_{n},\pi_{n})$ with
  $(\gamma_{n}\cdot u_{n},\gamma_{n}\cdot \pi_{n})$ and assume that
  $u_{n} \to u_{0}$.

  Suppose that there are infinitely many $n$ such that $[u_{n}]=[v]$
  for some $v\in \go$ (where $[u]$ denotes the orbit of $u$ in $\go$).
  Then we can pass to a subsequence, relabel, and assume that for all
  $n$ there are $\gamma_{n}\in G$ such that $\gamma_{n}\cdot v=u_{n}$.
  Then $\gamma_{n}\cdot v\to u_{0}$ and $u_{0}\in \overline{[v]}=[v]$.
  (Orbits are closed since $\gugo$ is Hausdorff.)  Hence for all $n$,
  $[u_{n}]=[u_{0}]=[v]$.  Replacing $(u_{n},\pi_{n})$ with
  $(\eta_{n}\cdot u_{n},\eta_{n}\cdot \pi_{n})$ for appropriate
  $\eta_{n}\in G$, we can assume $u_{n}=u_{0}$ for all $n$.

  Now we can replace $G$ with $G\restr{[u_{0}]}$ and assume
  $\irrind(u_{0},\pi_{n})\to \irrind(u_{0},\pi_{0})$ as irreducible
  representations of $\cs(G\restr{[u_{0}]})$.  Since
  $G\restr{[u_{0}]}$ is equivalent to $G(u_{0})$, this means
  $\pi_{n}\to \pi_{0}$ in $G(u_{0})^{\wedge}$ (by
  \cite{rw:morita}*{Theorem~3.29}).

  This provides the required lift in the case infinitely many orbits
  $[u_{n}]$ coincide.

  Otherwise, we can pass to a subsequence, relabel, and assume that
  $n>m\ge0$ implies $[u_{m}]\not=[u_{n}]$.  We can also assume
  $G(u_{n})\to H\subset G(u_{0})$.

  We let $C=\set{u_{n}:n\ge0}$ and $F=G\cdot C$.  We claim that $F$ is
  closed.  Suppose that $\gamma_{k}\cdot v_{k}\to v$ with each
  $v_{k}\in C$.  If $v_{k}=u_{j}$ for infinitely many $k$, then
  $v\in \overline{ [u_{j}]}=[u_{j}]$ and $v\in F$.  Otherwise, we can
  pass to a subsequence, relabel, and assume $v_{k}=u_{n_{k}}$ with
  $u_{n_{k}}\to u_{0}$.  Then $\bigl([u_{n_{k}}]\bigr)$ converges to
  both $[u_{0}]$ and $[v]$ in $\gugo$.  Since the latter is Hausdorff,
  this implies $[v]=[u_{0}]$ and $v\in F$ in this case as well.  This
  proves that $F$ is closed as claimed.

  In view of Lemma~\ref{lem-pi-acts}, we get a well-defined function
  $\sg:F\to\so$ given by
  \begin{equation}
    \label{eq:32}
    \sg(\gamma\cdot u_{n})=
    \begin{cases}
      \gamma\cdot G(u_{n})&\text{if $n\ge1$, and}\\
      \gamma\cdot H&\text{if $n=0$.}
    \end{cases}
  \end{equation}
  We claim that $\sg$ is continuous.  If not, there is a sequence
  $\gamma_{k}\cdot v_{k}\to \gamma_{0}\cdot v_{0}$ with each
  $v_{k}\in C$ and such that no subsequence of
  $\bigl(\gamma_{k}\cdot \sg(v_{k})\bigr)$ converges to
  $\gamma_{0}\cdot \sg(v_{0})$.  If there are infinitely many $k$ such
  that $v_{k}=u_{n}$, then we can pass to subsequence, relabel, and
  assume $\gamma_{k}\cdot u_{n}\to \gamma_{0}\cdot v_{0}$.  Then
  $\gamma_{0}\cdot v_{0}\in \overline{[u_{n}]}=[u_{n}]$.  Then we can
  replace $v_{0}$ with $u_{n}$ and adjust $\gamma_{0}$ so that
  $\gamma_{k}\cdot u_{n}\to \gamma_{0}\cdot u_{n}$.  Since $\gugo$ is
  Hausdorff, the Mackey-Glimm-Ramsay Dichotomy
  \cite{wil:toolkit}*{Theorem~2.27} implies
  $\gamma\mapsto \gamma\cdot u_{n}$ is an open map.  Hence we can pass
  to a subsequence, relabel, and assume that
  $\gamma_{k}\to \gamma_{0}$.  Then
  $\sg(\gamma_{k}\cdot u_{n})=\gamma_{k}\cdot \sg(u_{n})\to
  \gamma_{0}\cdot \sg(u_{n})=\sg(\gamma_{0}\cdot u_{n})$ which
  contradicts our choice of $\bigl(\gamma_{k}\cdot v_{k}\bigr)$.

  Now we assume that no $v_{k}$ is repeated infinitely often.  Then we
  must have $v_{k}\to u_{0}$.  Since
  $\gamma_{k}\cdot v_{k}\to \gamma_{0}\cdot v_{0}$ and $\gugo$ is
  Hausdorff, we can adjust $\gamma_{0}$ if need be and replace
  $\gamma_{0}\cdot v_{0}$ with $\gamma_{0}\cdot u_{0}$.  Since $\sg$
  is clearly continuous on $C$, we have $\sg(v_{k})\to \sg(u_{0})$.
  Since $\rg$ acts continuously by assumption,
  $\gamma_{k}\cdot \sg(v_{k})\to \gamma_{0}\cdot \sg(u_{0})$.  Again,
  this contradicts our assumptions on
  $\bigl(\gamma_{k}\cdot v_{k}\bigr)$.  Hence $\sg$ is continuous on
  $F$ as claimed.

  Therefore $\sg$ is an equivariant subgroup section for $G\restr
  F$. Since $F$ is closed and $G$-invariant, we can replace $G$ by
  $G\restr F$.  Let
  $P^{*}:\I(\cs(G\restr F))\to \I(C_{0}(\Sigmah_{F})$ be the
  corresponding map.  Since
  $\psind(u_{k},\pi_{k})\to \psind(u_{0},\pi_{0})$ in
  $\Prim\bigl(\cs(G\restr F)\bigr)$, we have
  $P^{*}(\psind(u_{k},\pi_{k}))\to P^{*}(\psind(u_{0},\pi_{0}))$ in
  $\I(C_{0}(\Sigmah_{F})$.  By Proposition~\ref{prop-pstar}, if
  $k\ge1$, then $P^{*}(\psind(u_{k},\pi_{k})$ is the ideal of
  functions in $C_{0}(\Sigmah_{F})$ vanishing on
  $G\restr F\cdot (G(u_{k}),\pi_{k})=G\cdot (G(u_{k}),\pi_{k})$.
  Similarly, $P^{*}(\psind(u_{0},\pi_{0}))$ is the ideal vanishing on
  $H\cdot (H,\pi_{0}\restr H)$.  We can use Lemma~\ref{lem-fix-8.38}
  to pass to a subsequence, relabel, so that there are
  $\gamma_{k}\in G$ such that
  $\gamma_{k}\cdot (G(u_{k}),\pi_{k})\to (H,\pi_{0}\restr H)$ in
  $\Sigmah$.  But then $(u_{k},\pi_{k})\to (u_{0},\pi_{0})$ in
  $\stabg$ (by Proposition~\ref{prop-conv-ts}).  Since
  $\psind(\gamma_{k}\cdot (u_{k} , \pi_{k}))=\psind(u_{k},\pi_{k})$,
  this completes the proof.
\end{proof}

Before closing out this section, we note that there are natural
circumstances where $\rg$ acts continuously without having to assume
either that $G$ is proper or invoking Example~\ref{ex-gen-proper}.

\begin{lemma}
  \label{lem-trans-grp} Let $(\grg,X)$ be a locally compact
  transformation group with $\grg$ abelian, and let $G=G(\grg,X)$ be
  the corresponding action groupoid as in
  Section~\ref{sec:abelian-group-action}.  Then $\rg$ acts
  continuously on $\so$.
\end{lemma}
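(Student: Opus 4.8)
The plan is to exploit the fact that, because $\grg$ is abelian, conjugation in $G=G(\grg,X)$ is merely a change of base point, so that — unlike in Lemma~\ref{lem-proper-ok} — no properness is needed. First I would record the explicit form of the $\rg$-action on $\so$. A closed subgroup $H\in\so$ with $p_0(H)=u$ necessarily lies in $G(u)$, so $H=\set{(u,k,u):k\in K}$ for some closed subgroup $K$ of $S_u\subseteq\grg$. For $\gamma=(v,g,u)\in G$ one computes $\gamma(u,k,u)\gamma^{-1}=(v,gkg^{-1},v)=(v,k,v)$, whence
\begin{equation*}
  \gamma\cdot H=\set{(v,k,v):(u,k,u)\in H}.
\end{equation*}
Thus the ``$\grg$-component'' $K$ is unchanged and only the base point moves from $u$ to $v=r(\gamma)$; by Lemma~\ref{lem-pi-acts} this indeed depends only on $\pi(\gamma)=(v,u)$, as it must.

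To prove continuity, suppose $H_n\to H_0$ in $\so$ with $u_n=p_0(H_n)$ and that $\pi(\gamma_n)=(v_n,u_n)\to\pi(\gamma_0)=(v_0,u_0)$ in $\rg$; since $\rg$ carries the relative product topology, this means precisely $u_n\to u_0$ and $v_n\to v_0$ in $X$. I would then verify $\gamma_n\cdot H_n\to\gamma_0\cdot H_0$ via the sequential characterization of convergence in the Fell topology (as used in the proof of Lemma~\ref{lem-act-sigma} and in \cite{wil:toolkit}*{Lemma~3.22}), checking its two standard conditions. For the approximation condition, take $a=(v_0,k_0,v_0)\in\gamma_0\cdot H_0$; then $b=(u_0,k_0,u_0)\in H_0$, so $H_n\to H_0$ supplies $b_n\in H_n$ with $b_n\to b$. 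As every element of $H_n$ has base point $u_n$, necessarily $b_n=(u_n,k_n,u_n)$, and $b_n\to b$ forces $k_n\to k_0$ in $\grg$; hence $a_n:=(v_n,k_n,v_n)\in\gamma_n\cdot H_n$ satisfies $a_n\to a$ because $v_n\to v_0$ and $k_n\to k_0$.

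For the closedness condition, suppose along a subsequence that $a_{n_j}\in\gamma_{n_j}\cdot H_{n_j}$ and $a_{n_j}\to a$. Writing $a_{n_j}=(v_{n_j},k_{n_j},v_{n_j})$, coordinatewise convergence in $G\subseteq X\times\grg\times X$ forces $a=(v_0,k_0,v_0)$ with $k_{n_j}\to k_0$. The associated elements $b_{n_j}=(u_{n_j},k_{n_j},u_{n_j})\in H_{n_j}$ then converge to $b=(u_0,k_0,u_0)$, and since $H_{n_j}\to H_0$ the closedness half of Fell convergence yields $b\in H_0$; therefore $a=\gamma_0 b\gamma_0^{-1}\in\gamma_0\cdot H_0$. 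Both conditions being met, $\gamma_n\cdot H_n\to\gamma_0\cdot H_0$, which is exactly continuity of the $\rg$-action on $\so$.

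I expect the only genuine work to be the bookkeeping of base points together with the correct invocation of the Fell-topology characterization; there is no hard obstacle, precisely because abelianness of $\grg$ collapses $\gamma(u,k,u)\gamma^{-1}$ to $(v,k,v)$. In particular, no subsequence extraction of the group elements $g_n$ with $v_n=g_n\cdot u_n$ is required — these need not converge when the action is not proper — which is what makes the argument go through here without a properness hypothesis. The one point I would double-check is that the relative product topology on $\rg$ records exactly the convergence $u_n\to u_0$ and $v_n\to v_0$, which is immediate from its definition as a subspace of $\go\times\go$.
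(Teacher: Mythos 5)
Your proposal is correct and follows essentially the same route as the paper: the key point in both is that abelianness of $\grg$ collapses $\gamma(u,k,u)\gamma^{-1}$ to $(v,k,v)$, so the action only moves the base point, and continuity reduces to tracking base points together with the Fell-topology convergence of the group components. The paper packages the latter by citing \cite{wil:toolkit}*{Lemma~3.22} (convergence in $\so$ splits into convergence of base points and of the subgroups $c(H_n)$ of $\grg$), whereas you verify the two halves of Fell convergence by hand; the content is the same.
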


\begin{remark}
  It would suffice for all the $S_{x}$ to be central in $G$.
\end{remark}

\begin{proof}
  As usual, we identify $\go=\set{(x,e,x)\in G:x\in X}$ with $X$. Then
  the isotropy group $G(x)$ is $\set{(x,h,x):h\in S_{x}}$ where
  $S_{x}=\set{h\in \grg:h\cdot x=x}$ is the stability group at $x$.
  Let $c:G\to \grg$ be the cocycle $c(y,g,x)=g$.  Note that $c$
  restricts to an isomorphism of $G(x)=\set{(x,g,x):x\in S_{x}}$ with
  $S_{x}$.  More generally, if $H\in\so$ and $H\subset G(x)$, then
  $c(H)$ is a subgroup of $\grg(x)$.  Using
  \cite{wil:toolkit}*{Lemma~3.22} for example, we have
  $H_{n}\subset G(x_{n})$ converging to $H\subset G(x)$ if and only if
  $x_{n}\to x$ and $c(H_{n})\to c(H)$ as subgroups of $\grg$.  Since
  \begin{equation}
    \label{eq:35}
    (y,g^{-1},x)(x,h,x)(x,g,y)=(y,g^{-1}hg,y)=(y,h,y)
  \end{equation}
  it follows that $\rg$ acts continuously on $\so$.
\end{proof}

Recall that a continuous surjection $f:X\to Y$ has \emph{local
  sections} if for each $y\in Y$ there is a neighborhood $V\subset Y$
and a continuous function $c:V\to X$ such that $f(c(z))=z$ for all
$z\in V$.

\begin{lemma}
  \label{lem-local-sections} Suppose that $\pi:G\to \rg$ has local
  sections.  Then $\rg$ acts continuously on $\so$.
\end{lemma}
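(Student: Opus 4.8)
The plan is to reduce the continuity of the $\rg$-action directly to the continuity of the $G$-action on $\so$, using the local section to lift a convergent sequence in $\rg$ back up to $G$. Following the phrasing used in the proofs of Lemma~\ref{lem-pi-cts} and Lemma~\ref{lem-proper-ok}, what must be shown is this: whenever $H_{n}\to H_{0}$ in $\so$ with $u_{n}=p_{0}(H_{n})$, and $\pi(\gamma_{n})\to\pi(\gamma_{0})$ in $\rg$ (with the relative product topology), then $\pi(\gamma_{n})\cdot H_{n}\to\pi(\gamma_{0})\cdot H_{0}$ in $\so$. So I would begin with such data and write $r_{n}=\pi(\gamma_{n})$ and $r_{0}=\pi(\gamma_{0})$, so that $r_{n}\to r_{0}$ in $\rg$.

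Since $\pi$ has local sections, there is a neighborhood $V$ of $r_{0}$ in $\rg$ and a continuous map $c:V\to G$ with $\pi\bigl(c(r)\bigr)=r$ for all $r\in V$. Because $r_{n}\to r_{0}$, eventually $r_{n}\in V$, so after discarding finitely many terms I may set $\eta_{n}=c(r_{n})$ and $\eta_{0}=c(r_{0})$; continuity of $c$ gives $\eta_{n}\to\eta_{0}$ in $G$. The section property yields $\pi(\eta_{n})=r_{n}=\pi(\gamma_{n})$, so in particular $s(\eta_{n})=s(\gamma_{n})=u_{n}=p_{0}(H_{n})$, whence $\eta_{n}\cdot H_{n}$ is defined and, by Lemma~\ref{lem-pi-acts}, equals $\pi(\eta_{n})\cdot H_{n}=\pi(\gamma_{n})\cdot H_{n}$; likewise $\eta_{0}\cdot H_{0}=\pi(\gamma_{0})\cdot H_{0}$. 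Now $\eta_{n}\to\eta_{0}$ in $G$ and $H_{n}\to H_{0}$ in $\so$ with $s(\eta_{n})=p_{0}(H_{n})$, so the continuity of the $G$-action on $\so$ (the same fact invoked in the proof of Lemma~\ref{lem-act-sigma} via \cite{wil:toolkit}*{Lemma~3.22}) gives $\eta_{n}\cdot H_{n}\to\eta_{0}\cdot H_{0}$. Translating back through Lemma~\ref{lem-pi-acts}, this is exactly $\pi(\gamma_{n})\cdot H_{n}\to\pi(\gamma_{0})\cdot H_{0}$, as required.

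I do not expect a genuine obstacle: the entire content is the termwise lift furnished by the local section, and the remainder is bookkeeping to match source and range data. The one point deserving a little care is keeping the two topologies on $\rg$ straight—the lemma concerns the relative product topology (as flagged in the footnote to the definition of $\rg$), and it is precisely the compatibility of this topology with convergence of the pairs $(v_{n},u_{n})$ in $\go\times\go$ that permits applying $c$ to the $r_{n}$. Since $G$ is second countable, working with sequences throughout is harmless.
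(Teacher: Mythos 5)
Your proof is correct and follows essentially the same route as the paper's: both lift the convergent sequence $\pi(\gamma_{n})\to\pi(\gamma_{0})$ through the local section $c$ to get $\eta_{n}\to\eta_{0}$ in $G$ with $\pi(\eta_{n})=\pi(\gamma_{n})$, then invoke the continuity of the $G$-action on $\so$ together with Lemma~\ref{lem-pi-acts}. Your added remarks about matching source data and the relative product topology on $R_{G}$ are accurate bookkeeping that the paper leaves implicit.
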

\begin{proof}
  Suppose that $H_{n}\to H_{0}$ with $p_{0}(H_{n})=u_{n}$.  Suppose
  also that
  $\pi(\gamma_{n})=(v_{n},u_{n})\to \pi(\gamma_{0})=(u_{0},v_{0})$.
  Let $U$ be a neighborhood of $(v_{0},u_{0})$ and $c:U\to G$ a local
  section for $\pi$.  We can assume that
  $\bigl((v_{n},u_{n})\bigr)\subset U$ and let
  $\eta_{n}=c(v_{n},u_{n})$.  Then $\eta_{n}\to \eta_{0}$ and
  $\pi(\eta_{n})=\pi(\gamma_{n})$ for all $n\ge0$.  Then
  \begin{align}
    \pi(\gamma_{n})\cdot H_{n}
    &=\eta_{n}\cdot H_{n}\to \eta_{0}\cdot
      H_{0}=\pi(\gamma_{0})\cdot H_{0}.\qedhere
  \end{align}
\end{proof}

\section{Examples}
\label{sec:examples}

In order to see that Theorem~\ref{thm-jmp-dis} has new interesting
applications, we want to exhibit some elementary examples where the
isotropy is continuous except for jump discontinuities as in
Definition~\ref{def-jmp-dis}.  For ease of exposition, all of the
constructions here come from directed graphs, where the primitive
ideal space of the corresponding groupoid \cs-algebra can be computed
via \cite{honszy:jmsj04}.  Nevertheless, these examples show the
phenomena is reasonably common.

The groupoids we will construct will be \'etale groupoids and are now
usually called Deaconu--Renault groupoids.  The construction
originated from directed graphs in \cite{kprr:jfa97} and emerged on
its own in \cite{ren:otm00}.  For a more general construction, one can
consult \cite{renwil:tams16}*{\S5}.  We review the basics for the
constructions we need here.

We let $X$ be a locally compact space and let
$T:\dom(T)\subset X\to X$ be a local homeomorphism with $\dom(T)$ and
$\operatorname{ran}(T)=T(\dom(T))$ open subsets.  Let
$\Nz=\set{0,1,2,\dots}$.  Then
\begin{equation}
  \label{eq:48}
  G_{T}=\set{(x,m-n,y)\in X\times \Z\times
    X:\text{$T^{m}x=T^{n}y$ with $m,n\in\Nz$}}
\end{equation}
is a groupoid with respect to the natural operations.  The sets
\begin{equation}
  \label{eq:49}
  Z(U,m,n,V)=\set{(x,m-n,y)\in G_{T}:T^{m}x=T^{n}y}
\end{equation}
with $U$ and $V$ open in $X$ and $m,n\in\Nz$ form a basis for a
locally compact Hausdorff topology on $G_{T}$ with respect to which $G_{T}$ is
an \'etale groupoid, and such that the map $c:G_{\T}\to \Z$ given by
$c(x,k,y)=k$, is a continuous cocycle on $G_{T}$.  Furthermore, the
groupoid $G_{T}$ is always amenable (see
\cite{ren:otm00}*{Proposition~2.9} or
\cite{renwil:tams16}*{Theorem~5.13}) so that our results apply.
This construction
of $G_{T}$ from $X$ and $T$ is an example of a Deaconu--Renault
groupoid.

Given a general directed graph $E$, there is now a well understood way
of associating to it a Deaconu--Renault groupoid $G_{T}$ as above.  We let $X$
be the boundary path space $\partial E$ and $T$ the edge shift.  Although
we don't actually require them here, the
details of the construction
can be found in \cite{bcw:etds17} or \cite{web:pams14}.

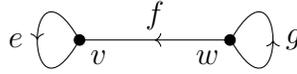
\begin{figure}[!ht]
  \centering
  \begin{tikzpicture}[decoration = {markings, mark = at position 0.5
      with {\arrow{>}} }]
    \filldraw (0,0) circle (2pt) node[below right]{$v$};
    \draw[scale=3,postaction=decorate] (0,0) to[in=240,out=120,loop]
    (0,0); \draw (-.85,0) node{$e$}; \draw (2.85,0) node{$g$};
    \draw[scale=3,postaction=decorate] (.67,0) to[in=60,out=300,loop]
    (.67,0); \filldraw (2,0) circle (2pt) node[below left]{$w$};
    \draw[postaction=decorate] (2,0) -- (0,0) node[above,pos=.5]{$f$};
  \end{tikzpicture}
  \caption{A Simple Graph} \label{fig:aidan}
\end{figure}

To start, let $E$ be the graph in Figure~\ref{fig:aidan} which comes
from \cite{simwil:art15}*{Example~3.4}. In this case, the boundary
paths are all infinite paths and consist of $e^{\infty}$,
$g^{\infty}$, and $g^{n-1}fe^{\infty}$ for $n\ge1$.  Topologically,
$\set{e^{\infty}} \cup \set{g^{n-1}fe^{\infty}}$ is discrete while
every neighborhood of $g^{\infty}$ contains $\set{g^{n-1}fe^{\infty}}$
for $n\ge N$ for some $N$.  In particular,
$\lim_{n\to\infty} g^{n-1}fe^{\infty}=g^{\infty}$.  At this point, we
can forget about the graph and
reduce to simply specifying a space $X$ and local homeomorphism $T$.
Then we can construct $G_{T}$ as above.

\begin{example}
  [\cite{simwil:art15}*{Example~3.4}] \label{ex-aidan} We let
  $X=\partial E$ and think of $x_{n}= g^{n-1}fe^{\infty}$ as a point
  in $X$ for $n\ge1$.  Then $X$ is
  the space $\set{x_{0},x_{1},x_{2},\dots,x_{\infty}}$ where
  $X\setminus \sset{x_{\infty}}$ is discrete and the open
  neighborhoods of $x_{\infty}$ are all of the form
  $\set{x_{k}:k\ge N}$ for some integer $N\ge0$.  The map $T:X\to X$ is
  given by
  \begin{equation}
    \label{eq:34}
    T(x_{k})=
    \begin{cases}
      x_{0}&\text{if $k=0$,} \\ x_{k-1}&\text{if $1\le k<\infty$, and}
      \\
      x_{\infty}&\text{if $k=\infty$.}
    \end{cases}
  \end{equation}

  Since $T^{k}x_{k}=x_{0}$ for all $0\le k <\infty$, we have
  $(x_{0},-k,x_{k})\in G_{T}$ and $[x_{0}]=\set{x_{k}:0\le k<\infty}$.
  On the other hand, $[x_{\infty}]=\sset{x_{\infty}}$.  Note that
  $[x_{0}]$ is dense and the orbit space $G_{T}\backslash X$ is
  $T_{0}$ consisting of the dense point $[x_{0}]$ and the closed point
  $[x_{\infty}]$.

  If $0\le k <\infty$, then $T^{n}x_{k}=x_{0}$ for all $n\ge k$.
  Hence $(x_{k},m+k-(n+k),x_{k})\in G_{T}$ for all $m,n\in\N$.  Thus
  $G_{T}(x_{k})=\set{(x_{k},l,x_{k}):l\in\Z}$ for all $0\le k<\infty$.
  This clearly holds for $x_{\infty}$ as well.

  Now let $(x_{\infty},m-n,x_{\infty})\in G_{T}(x_{\infty})$ and let
  $Z(U,m,n,V)$ be a basic open neighborhood of
  $(x_{\infty},m-n,x_{\infty})$.  Let $k> \max\set{m,n}$.  Then
  $T^{m}x_{k}=T^{n}x_{k}$ only if $m=n$.  Thus if $m\not=n$, then
  $G_{T}(x_{k})\cap Z(U,m,n,V)=\emptyset$ if $k>\max\set{m,n}$.  It
  follows that $G_{T}(x_{k})\to \sset{(x_{\infty},0,x_{\infty})}$.

  Since $\set{x_{k}:0\le k<\infty}$ is discrete, we have established
  that $G_{T}$ has isotropy which is continuous except for a jump
  discontinuity at $x_{\infty}$.

  Now we can identify $\stab(G_{T})$ with $X\times \T$ and the sets
  $\sset {x_{k}}\times\T$ are each homeomorphic to $\T$ for all $k$
  and open if $k<\infty$.  Consider a sequence
  $\bigl((x_{k},z_{k})\bigr)$.  Recall that $\bigl(G_{T}(x_{k})\bigr)$
  converges to the trivial subgroup of $G_{T}(x_{\infty})$.  Thus if
  $a_{k}\in G_T(x_{k})$ and $a_{k}$ converges, it must converge to the
  identity in $G_T(x_{\infty})$.  Since the cocycle $c:G_{T}\to\Z$ is
  continuous, $a_{k}$ must eventually be the identity of
  $G_T(x_{k})$. It follows that $\bigl((x_{k},z_{k})\bigr)$ converges
  to $(x_{\infty},w)$ for all $w\in\T$ by
  Proposition~\ref{prop-conv-ts}.  \qed
\end{example}

The construction of the boundary path space is a bit more complicated
if we allow our graph to have sinks.  For example, let $E$ be the
graph in Figure~\ref{fig:danie}.  The boundary path space for the
graph is
\begin{equation}
  \label{eq:36}
  \partial E = \set{(e_1e_2)^n h, e_2(e_1e_2)^n h, (e_1e_2)^\infty,
    (e_2e_1)^\infty :n\in \Nz}.
\end{equation}
As in the previous example, the construction of $G_{T}$ becomes a bit
more transparent if we just describe the boundary path space as a
space without reference to the graph.

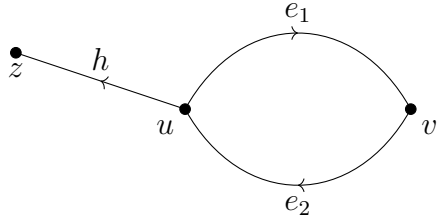
\begin{figure}[ht!]
  \centering
  \begin{tikzpicture}[decoration = {markings, mark = at position 0.5
      with {\arrow{>}} }]
    \draw[scale=3,postaction=decorate] (0,0) .. controls (.25,.45) and
    (.75,.45) ..  (1,0) node[above, pos =.5]{$e_{1}$};
    \draw[scale=3,postaction=decorate] (1,0) .. controls (.75,-.45)
    and (.25,-.45) ..  (0,0) node[below, pos =.5]{$e_{2}$};
    \draw[scale=3,postaction=decorate] (0,0) -- (-.75,.25)
    node[above,pos=.5]{$h$}; \filldraw (0,0) circle (2pt) node[below
    left]{$u$}; \filldraw (3,0) circle (2pt) node[below right]{$v$};
    \filldraw (-2.25,.75) circle (2pt) node[below]{$z$};
  \end{tikzpicture}
  \caption{A Graph With a Sink}
  \label{fig:danie}
\end{figure}

\begin{example}
  \label{ex-sink} We let $x_{n}=(e_{1}e_{2})^{n}h$ for $n\in \Nz$,
  $x_{\infty}= (e_{1}e_{2})^{\infty}$, $y_{ n}=e_{2}(e_{1}e_{2})^{n}h$
  for $n\in \Nz$, and $y_{\infty}=(e_{2}e_{1})^{\infty}$.  Then
  \begin{equation}
    \label{eq:39}
    X=\set{x_{0},x_{1},\dots,x_{\infty}} \cup
    \set{y_{0},y_{1},\dots,y_{\infty}} 
  \end{equation}
  where $X\setminus\set{x_{\infty},y_{\infty}}$ is discrete and
  $\set{x_{k}:k\ge N}$ and $\set{y_{k}:k\ge N}$ are neighborhoods of
  $x_{\infty}$ and $y_{\infty}$, respectively, for all $N\in \N$.  If
  we let $\dom(T)=X\setminus \sset {x_{0}$}, then we define a local
  homeomorphism $T:\dom(T)\subset X\to X$ by
  \begin{equation}
    \label{eq:40}
    T(x)=
    \begin{cases}
      y_{k-1}&\text{if $x=x_{k}$ with $1\le k <\infty$},
      \\
      y_{\infty}&\text{if $x=x_{\infty}$}, \\
      x_{k}&\text{if $x=y_{k}$, with $0\le k <\infty$, and}\\
      x_{\infty}&\text{if $x=y_{\infty}$.}
    \end{cases}
  \end{equation}
  Thus there are only two orbits in $\gto$; namely, the dense orbit
  $[x_{0}]=\set{x_{k},y_{k}:0\le k<\infty}$ and the closed orbit
  $[x_{\infty}]= \set{x_{\infty},y_{\infty}}$.  Clearly, the orbit
  space is $T_{0}$.

  If $k<\infty$, then $T^{n}x_{k}$ is only defined if $n\le 2k$.  Then
  it is not hard to see that
  \begin{equation}
    \label{eq:43}
    G_T(x_{k})=\set{(x_{k},0,x_{k})} \quad\text{for all $0\le k<\infty$.}
  \end{equation}
  Similarly, $G_T(y_{k})$ is trivial for all $0\le k<\infty$.  On the
  other hand,
  \begin{equation}
    \label{eq:44}
    G_T(x_{\infty})=\set{(x_{\infty},2k,x_{\infty}):k\in \Nz}
  \end{equation}
  and a similar formula holds for $G_T(y_{\infty})$.  Hence is
  isotropy is trivial except at $D=\set{x_{\infty},y_{\infty}}$ and
  $G_{T}$ has continuous isotropy except for jump discontinuities as
  in Remark~\ref{rem-free-off-d}.

  Furthermore, the sequence $\bigl((x_{k},1)\bigr)$ converges to
  $(x_{\infty},w)$ in $\stabgt$ for all $w\in\T$.  Similarly,
  $\bigl((y_{k},1)\bigr)$ converges to $(y_{\infty},w)$ for all
  $w\in\T$. \qed
\end{example}

We can now combine the phenomena exhibited by the previous two
examples by starting with a loop and then attaching exits to sinks or
additional loops. Consider the graph in Figure~\ref{fig:danie2}.  Now
\begin{equation}
  \label{eq:45}
  \partial E=\set{g^{n-1}f(e_{2}e_{1})^{\infty},
    g^{\infty},(e_{2}e_{1})^{\infty}, e_{2}(e_{1}e_{2})^{n-1}h,
    (e_{1}e_{2})^{\infty}, (e_{1}e_{2})^{n}h}.
\end{equation}
Now let $x_{\infty}=g^{\infty}$,
$x_{n}=g^{n-1}f(e_{2}e_{1})^{\infty}$, $x_{0}=(e_{2}e_{1})^{\infty}$,
$z_{n}=e_{2}(e_{1}e_{2})^{n-1}h$, $y_{\infty}=(e_{1}e_{2})^{\infty}$,
and $y_{n}=(e_{1}e_{2})^{n}h$.

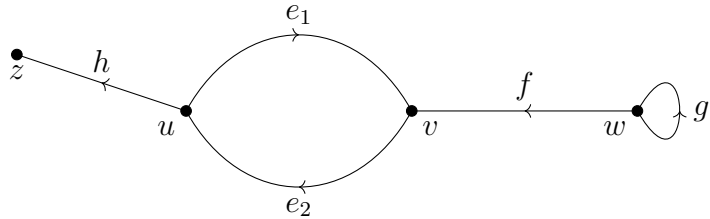
\begin{figure}[ht!]
  \centering
  \begin{tikzpicture}[decoration = {markings, mark = at position 0.5
      with {\arrow{>}} }]
    \draw[scale=3,postaction=decorate] (0,0) .. controls (.25,.45) and
    (.75,.45) ..  (1,0) node[above, pos =.5]{$e_{1}$};
    \draw[scale=3,postaction=decorate] (1,0) .. controls (.75,-.45)
    and (.25,-.45) ..  (0,0) node[below, pos =.5]{$e_{2}$};
    \draw[scale=3,postaction=decorate] (0,0) -- (-.75,.25)
    node[above,pos=.5]{$h$}; \filldraw (0,0) circle (2pt) node[below
    left]{$u$}; \filldraw (3,0) circle (2pt) node[below right]{$v$};
    \filldraw (-2.25,.75) circle (2pt) node[below]{$z$};

    \draw[scale=3,postaction=decorate] (2,0) -- (1,0)
    node[above,pos=.5]{$f$}; \draw[scale=3,postaction=decorate] (2,0)
    to[in=60,out=300,loop] (2,0); \filldraw (6,0) circle (2pt)
    node[below left]{$w$}; \draw (6.85,0) node{$g$};
  \end{tikzpicture}
  \caption{A Combination}
  \label{fig:danie2}
\end{figure}

\begin{example}
  \label{ex-combo} We let
  \begin{equation}
    \label{eq:46}
    X=\set{x_{0},x_{1},\dots,x_{\infty}} \cup \set{z_{0},z_{1},\dots}
    \cup \set{y_{0},y_{1},\dots,y_{\infty}}
  \end{equation}
  where $X\setminus \set{x_{\infty},x_{0},y_{\infty}}$ is discrete and
  $\set{x_{k}:k\ge N}$, $\set{z_{k},k\ge N}$ and $\set{y_{k}:k\ge N}$
  are neighborhoods of $x_{\infty}$, $x_{0}$, and $y_{\infty}$,
  respectively, for all $N\in\N$.  Here
  $\dom(T)=X\setminus \sset{y_{0}}$ and
  \begin{equation}
    \label{eq:47}
    T(x)=
    \begin{cases}
      x_{\infty}& \text{if $x=x_{\infty}$,} \\
      x_{k-1} &\text{if $x=x_{k}$ for $1\le k <\infty$,} \\
      y_{\infty}&\text{if $x=x_{0}$,} \\
      y_{k-1}&\text{if $x=z_{k}$ for $1\le k<\infty$,} \\
      y_{0}&\text{if $x=z_{0}$,} \\
      x_{0}&\text{if $x=y_{\infty}$, and} \\
      z_{k}&\text{if $x=y_{k}$ for $1\le k <\infty$.}
    \end{cases}
  \end{equation}
  The orbits consist of $[x_{\infty}]=\sset{x_{\infty}}$,
  $[x_{0}]=\set{x_{k}:0\le k <\infty}\cup\sset{y_{\infty}}$, and
  $[z_{0}]=\set{z_{k}, y_{k}:0\le k<\infty}$. 
  As in the
  previous examples, it is not hard to see that $G_T(z_{k})$ and
  $G_T(y_{k})$ are trivial if $1\le k<\infty$ while $G_T(y_{\infty})$
  and $G_T(x_{k})$, for $0\le k \le \infty$ are all isomorphic to
  $\Z$.  However $G_T(x_{k})\to \set{(x_{\infty},0,x_{\infty})}$.

  To see that $G_{T}$ has continuous isotropy except for jump
  discontinuities, we define $\sg:\gto\to\so$ by
  \begin{equation}
    \label{eq:50}
    \sg(x)=
    \begin{cases}
      \sset{(x_{\infty},0,x_{\infty})} &\text{if $x=x_{\infty}$,}\\
      \sset{(y_{\infty},0,y_{\infty})}&\text{if $x=y_{\infty}$, and} \\
      G_T(x)&\text{otherwise.}
    \end{cases}
  \end{equation}

  It follows that any sequence of the form $\bigl((x_{k},z_{k}) \bigr)$
  converges to $(x_{\infty},w)$ in $\stabgt$ for any $w\in\T$.  On the
  other hand, $\bigl((z_{k},1)\bigr)$ converges to $(x_{0},w)$ in
  $\stabgt$ for all $w\in \T$.  Similarly, $\bigl((y_{k},1)\bigr)$
  converges to $(y_{\infty},w)$ in $\stabgt$ for all $w\in\T$. \qed
\end{example}

These examples make it clear that more complicated examples of
groupoids with continuous isotropy except for jump discontinuities can
be built by starting with a loop and then attaching sinks or
additional loops.


\def\noopsort#1{}\def\cprime{$'$} \def\sp{^}
\begin{bibdiv}
\begin{biblist}

\bib{abe:mz78}{article}{
      author={Abels, Herbert},
       title={A universal proper {$G$}-space},
        date={1978},
        ISSN={0025-5874},
     journal={Math. Z.},
      volume={159},
      number={2},
       pages={143\ndash 158},
         url={https://doi.org/10.1007/BF01214487},
      review={\MR{501039}},
}

\bib{bcw:etds17}{article}{
      author={Brownlowe, Nathan},
      author={Carlsen, Toke~Meier},
      author={Whittaker, Michael~F.},
       title={Graph algebras and orbit equivalence},
        date={2017},
        ISSN={0143-3857},
     journal={Ergodic Theory Dynam. Systems},
      volume={37},
      number={2},
       pages={389\ndash 417},
         url={https://doi.org/10.1017/etds.2015.52},
      review={\MR{3614030}},
}

\bib{cla:iumj07}{article}{
      author={Clark, Lisa~Orloff},
       title={C{CR} and {GCR} groupoid {$C\sp *$}-algebras},
        date={2007},
        ISSN={0022-2518},
     journal={Indiana Univ. Math. J.},
      volume={56},
      number={5},
       pages={2087\ndash 2110},
      review={\MR{MR2359724}},
}

\bib{echeme:em11}{article}{
      author={Echterhoff, Siegfried},
      author={Emerson, Heath},
       title={Structure and {$K$}-theory for crossed products by proper
  actions},
        date={2011},
     journal={Expo. Math.},
      volume={29},
       pages={300\ndash 344},
}

\bib{goe:rmjm12}{article}{
      author={Goehle, Geoff},
       title={The {M}ackey machine for crossed products by regular groupoids.
  {II}},
        date={2012},
        ISSN={0035-7596},
     journal={Rocky Mountain J. Math.},
      volume={42},
      number={3},
       pages={873\ndash 900},
         url={https://doi.org/10.1216/RMJ-2012-42-3-873},
      review={\MR{2966476}},
}

\bib{gre:jfa69}{article}{
      author={Greenleaf, Frederick~P.},
       title={Amenable actions of locally compact groups},
        date={1969},
     journal={J. Functional Analysis},
      volume={4},
       pages={295\ndash 315},
         url={https://doi.org/10.1016/0022-1236(69)90016-0},
      review={\MR{0246999}},
}

\bib{honszy:jmsj04}{article}{
      author={Hong, Jeong~Hee},
      author={Szyma{\'n}ski, Wojciech},
       title={The primitive ideal space of the {$C^\ast$}-algebras of infinite
  graphs},
        date={2004},
        ISSN={0025-5645},
     journal={J. Math. Soc. Japan},
      volume={56},
      number={1},
       pages={45\ndash 64},
         url={http://dx.doi.org/10.2969/jmsj/1191418695},
      review={\MR{2023453 (2004j:46088)}},
}

\bib{ikrsw:jfa20}{article}{
      author={Ionescu, Marius},
      author={Kumjian, Alex},
      author={Renault, Jean~N.},
      author={Sims, Aidan},
      author={Williams, Dana~P.},
       title={{$C^*$}-algebras of extensions of groupoids by group bundles},
        date={2021},
        ISSN={0022-1236},
     journal={J. Funct. Anal.},
      volume={280},
      number={5},
       pages={in press},
         url={https://doi.org/10.1016/j.jfa.2020.108892},
      review={\MR{4189010}},
}

\bib{ikrsw:xx21}{article}{
      author={Ionescu, Marius},
      author={Kumjian, Alex},
      author={Renault, Jean~N.},
      author={Sims, Aidan},
      author={Williams, Dana~P.},
       title={Pushouts of extensions of groupoids by bundles of abelian
  groups},
        date={2021},
      eprint={arXiv.2107.05776},
}

\bib{iksw:jot19}{article}{
      author={Ionescu, Marius},
      author={Kumjian, Alex},
      author={Sims, Aidan},
      author={Williams, Dana~P.},
       title={The {D}ixmier-{D}ouady classes of certain groupoid
  {$C^\ast$}-algebras with continuous trace},
        date={2019},
        ISSN={0379-4024},
     journal={J. Operator Theory},
      volume={81},
      number={2},
       pages={407\ndash 431},
         url={https://doi.org/10.7900/jot},
      review={\MR{3959064}},
}

\bib{ionwil:iumj09}{article}{
      author={Ionescu, Marius},
      author={Williams, Dana~P.},
       title={The generalized {E}ffros-{H}ahn conjecture for groupoids},
        date={2009},
     journal={Indiana Univ. Math. J.},
       pages={2489\ndash 2508},
}

\bib{ionwil:pams08}{article}{
      author={Ionescu, Marius},
      author={Williams, Dana~P.},
       title={Irreducible representations of groupoid {$C\sp *$}-algebras},
        date={2009},
        ISSN={0002-9939},
     journal={Proc. Amer. Math. Soc.},
      volume={137},
      number={4},
       pages={1323\ndash 1332},
      review={\MR{MR2465655}},
}

\bib{kmqw:nyjm10}{article}{
      author={Kaliszewski, S.},
      author={Muhly, Paul~S.},
      author={Quigg, John},
      author={Williams, Dana~P.},
       title={Coactions and {F}ell bundles},
        date={2010},
        ISSN={1076-9803},
     journal={New York J. Math.},
      volume={16},
       pages={315\ndash 359},
         url={http://nyjm.albany.edu:8000/j/2010/16_315.html},
      review={\MR{2740580}},
}

\bib{kprr:jfa97}{article}{
      author={Kumjian, Alex},
      author={Pask, David},
      author={Raeburn, Iain},
      author={Renault, Jean~N.},
       title={Graphs, groupoids, and {C}untz-{K}rieger algebras},
        date={1997},
        ISSN={0022-1236},
     journal={J. Funct. Anal.},
      volume={144},
      number={2},
       pages={505\ndash 541},
      review={\MR{MR1432596 (98g:46083)}},
}

\bib{mrw:tams96}{article}{
      author={Muhly, Paul~S.},
      author={Renault, Jean~N.},
      author={Williams, Dana~P.},
       title={Continuous-trace groupoid {$C\sp \ast$}-algebras. {III}},
        date={1996},
        ISSN={0002-9947},
     journal={Trans. Amer. Math. Soc.},
      volume={348},
      number={9},
       pages={3621\ndash 3641},
      review={\MR{MR1348867 (96m:46125)}},
}

\bib{neu:phd11}{thesis}{
      author={Neumann, Katharina},
       title={A description of the {J}acobson topology on the spectrum of a
  transformation group {$\cs$}-algebras by proper actions},
        type={Ph.D. Thesis},
organization={Westf\"alische Wilhelms-Universit\"at M\"unster},
        date={2011},
}

\bib{ped:cs-algebras}{book}{
      author={Pedersen, Gert~K.},
       title={{$C^*$}-algebras and their automorphism groups},
      series={London Mathematical Society Monographs},
   publisher={Academic Press Inc. [Harcourt Brace Jovanovich Publishers]},
     address={London},
        date={1979},
      volume={14},
        ISBN={0-12-549450-5},
      review={\MR{81e:46037}},
}

\bib{ren:otm00}{incollection}{
      author={Renault, Jean~N.},
       title={Cuntz-like algebras},
        date={2000},
   booktitle={Operator theoretical methods ({T}imi\c soara, 1998)},
   publisher={Theta Found., Bucharest},
       pages={371\ndash 386},
      review={\MR{1770333 (2001g:46130)}},
}

\bib{ren:jot91}{article}{
      author={Renault, Jean~N.},
       title={The ideal structure of groupoid crossed product \cs-algebras},
        date={1991},
     journal={J. Operator Theory},
      volume={25},
       pages={3\ndash 36},
}

\bib{renwil:tams16}{article}{
      author={Renault, Jean~N.},
      author={Williams, Dana~P.},
       title={Amenability of groupoids arising from partial semigroup actions
  and topological higher rank graphs},
        date={2017},
        ISSN={0002-9947},
     journal={Trans. Amer. Math. Soc.},
      volume={369},
      number={4},
       pages={2255\ndash 2283},
         url={http://dx.doi.org/10.1090/tran/6736},
      review={\MR{3592511}},
}

\bib{rw:morita}{book}{
      author={Raeburn, Iain},
      author={Williams, Dana~P.},
       title={Morita equivalence and continuous-trace {$C^*$}-algebras},
      series={Mathematical Surveys and Monographs},
   publisher={American Mathematical Society},
     address={Providence, RI},
        date={1998},
      volume={60},
        ISBN={0-8218-0860-5},
      review={\MR{2000c:46108}},
}

\bib{simwil:art15}{article}{
      author={Sims, Aidan},
      author={Williams, Dana~P.},
       title={The primitive ideals of some \'etale groupoid {$C^*$}-algebras},
        date={2016},
        ISSN={1386-923X},
     journal={Algebr. Represent. Theory},
      volume={19},
      number={2},
       pages={255\ndash 276},
         url={http://dx.doi.org/10.1007/s10468-015-9573-4},
      review={\MR{3489096}},
}

\bib{web:pams14}{article}{
      author={Webster, Samuel B.~G.},
       title={The path space of a directed graph},
        date={2014},
        ISSN={0002-9939},
     journal={Proc. Amer. Math. Soc.},
      volume={142},
      number={1},
       pages={213\ndash 225},
         url={https://doi.org/10.1090/S0002-9939-2013-11755-7},
      review={\MR{3119197}},
}

\bib{wil:crossed}{book}{
      author={Williams, Dana~P.},
       title={Crossed products of {$C{\sp \ast}$}-algebras},
      series={Mathematical Surveys and Monographs},
   publisher={American Mathematical Society},
     address={Providence, RI},
        date={2007},
      volume={134},
        ISBN={978-0-8218-4242-3; 0-8218-4242-0},
      review={\MR{MR2288954 (2007m:46003)}},
}

\bib{wil:toolkit}{book}{
      author={Williams, Dana~P.},
       title={A tool kit for groupoid {$C^*$}-algebras},
      series={Mathematical Surveys and Monographs},
   publisher={American Mathematical Society, Providence, RI},
        date={2019},
      volume={241},
        ISBN={978-1-4704-5133-2},
      review={\MR{3969970}},
}

\bib{wil:tams81}{article}{
      author={Williams, Dana~P.},
       title={The topology on the primitive ideal space of transformation group
  {$C\sp{\ast} $}-algebras and {C}.{C}.{R}. transformation group {$C\sp{\ast}
  $}-algebras},
        date={1981},
        ISSN={0002-9947},
     journal={Trans. Amer. Math. Soc.},
      volume={266},
      number={2},
       pages={335\ndash 359},
      review={\MR{MR617538 (82h:46081)}},
}

\end{biblist}
\end{bibdiv}

\end{document}